\documentclass[11pt,letterpaper,reqno]{amsart}

\usepackage[T1]{fontenc}
\usepackage[utf8]{inputenc}
\usepackage{newtxtext,newtxmath}
\usepackage{microtype}
\usepackage{mathtools}
\usepackage{enumitem}
\usepackage{xcolor}
\usepackage{url}
\usepackage[colorlinks=true,linkcolor=blue!55!black,citecolor=green!45!black,urlcolor=blue!55!black]{hyperref}
\hypersetup{
  pdftitle={Torsion-Shifted Ribet Points and Cofinite Prime-Index Returns},
  pdfauthor={Khai-Hoan Nguyen-Dang},
  pdfsubject={Arithmetic returns, Ribet points, and geometric divisibility sequences over number fields},
  pdfkeywords={Silverman conjecture, Ribet point, semiabelian variety, reduction order, geometric divisibility sequence, N\'eron model}
}
\usepackage[nameinlink,noabbrev,capitalise]{cleveref}

\setlist[enumerate]{leftmargin=2.25em,itemsep=2pt,topsep=4pt}
\setlist[itemize]{leftmargin=2.1em,itemsep=2pt,topsep=4pt}
\allowdisplaybreaks
\newtheorem{theorem}{Theorem}[section]
\newtheorem{proposition}[theorem]{Proposition}
\newtheorem{corollary}[theorem]{Corollary}
\newtheorem{lemma}[theorem]{Lemma}
\newtheorem{conjecture}[theorem]{Conjecture}
\theoremstyle{definition}
\newtheorem{definition}[theorem]{Definition}

\theoremstyle{remark}
\newtheorem{remark}[theorem]{Remark}

\crefname{conjecture}{conjecture}{conjectures}
\Crefname{conjecture}{Conjecture}{Conjectures}
\crefname{question}{question}{questions}
\Crefname{question}{Question}{Questions}

\newcommand{\Gm}{\mathbf G_m}

\newcommand{\Q}{\mathbf Q}
\newcommand{\Z}{\mathbf Z}
\newcommand{\N}{\mathbf N}
\newcommand{\F}{\mathbf F}
\newcommand{\OK}{\mathcal O_K}

\newcommand{\ord}{\operatorname{ord}}
\newcommand{\End}{\operatorname{End}}
\newcommand{\Hom}{\operatorname{Hom}}
\newcommand{\Ext}{\operatorname{Ext}}

\newcommand{\Spec}{\operatorname{Spec}}
\newcommand{\Spf}{\operatorname{Spf}}
\newcommand{\Gal}{\operatorname{Gal}}
\newcommand{\im}{\operatorname{im}}
\newcommand{\id}{\operatorname{id}}

\newcommand{\supp}{\operatorname{Supp}}

\newcommand{\rad}{\operatorname{rad}}
\newcommand{\dd}{\mathfrak d}
\newcommand{\cO}{\mathcal O}

\newcommand{\cP}{\mathcal P}
\newcommand{\tors}{\mathrm{tors}}
\newcommand{\Zar}{\mathrm{Zar}}
\newcommand{\eps}{\varepsilon}
\newcommand{\Ner}{\mathcal N}
\newcommand{\SilvermanConj}{\hyperref[conj:silverman]{Conjecture~\ref*{conj:silverman}}}

\DeclareMathOperator{\length}{length}
\DeclareMathOperator{\Lie}{Lie}

\title[Ribet Points, geometric divisibility sequence and order of reductions]{Ribet Points, geometric divisibility sequence and order of reductions on semiabelian varieties}

\author{Khai-Hoan Nguyen-Dang}
\address{Morningside Center of Mathematics, Chinese Academy of Sciences, No. 55 Zhongguancun East Road, Beijing 100190, China}
\email{khaihoann@gmail.com}

\subjclass[2020]{Primary 11G35, 14K15; Secondary 11G10, 14L10, 11B37}
\keywords{geometric divisibility sequence, Silverman\'s conjecture, semiabelian variety, Ribet point, Poincar\'e biextension, reduction order, N\'eron model}

\date{}

\begin{document}

\begin{abstract}
Silverman conjectured that the geometric divisibility sequence attached to a Zariski-dense point on an irreducible commutative algebraic group of dimension at least two, with no unipotent part, returns to its initial value infinitely often. We construct, for the first time, unconditional examples of this phenomenon on geometrically nonsplit semiabelian varieties over number fields using torsion translates of normalized Ribet points.

More precisely, let $A/K$ be a positive-dimensional abelian variety over number field $K$, let
\[
1\longrightarrow\Gm\xrightarrow{\iota}G_q\xrightarrow{\pi}A
 \longrightarrow0
\]
be the extension represented by $q\in A^\vee(K)$, and let
$R_\beta(q)\in G_q(K)$ be the normalized Ribet point associated with a
homomorphism $\beta:A^\vee\to A$.  We set
$\delta:=\beta-\widehat\beta$ and assume that $\delta$ is an isogeny and that
$\mathbf Z(\delta q)$ is Zariski dense in $A$.  For a torsion point
$t\in\Gm(K)_{\tors}$, identify $t$ with $\iota(t)$ and set $P=R_\beta(q)+t$. Then $P$ has Zariski-dense cyclic orbit in the geometrically nonsplit extension $G_q$.  There exists an explicit integer $N_{\delta,t}$ such that if
$N_{\delta,t}>1$, then $N_{\delta,t}\mid \ord(\overline P_v)$ at all but finitely many places $v$.  Consequently, there is a squarefree integer $Q_P>1$ such that
\[
 (n,Q_P)=1
 \quad\Longrightarrow\quad
 \mathfrak d_{\mathcal N}(nP)=\mathfrak d_{\mathcal N}(P),
\]
where $\mathfrak d_{\mathcal N}$ denotes the full denominator ideal on the
Néron lft-model $\mathcal N$. In particular, we construct explicitly a geometrically nonsplit semiabelian surface $G/\mathbf Q$ and a semiabelian threefold over $\mathbf Q$ satisfying the Silverman conjecture. It follows that we can construct instances satisfying the Silverman conjecture for every dimension at least two. 

Over suitable number fields, every squarefree integer occurs as the exact eventual reduction divisor of a Zariski-dense point on a geometrically nonsplit semiabelian threefold. Moreover, the exact-order and primitive-divisor spectra can be made arbitrarily sparse.
\end{abstract}

\maketitle
\enlargethispage{3pt}

\section{Introduction}\label{sec:introduction}

\subsection{Geometric divisibility sequences and the
Silverman--Cheon--Hahn--Stange theorem}

A sequence $\mathbf a=(a_n)_{n\ge1}$ of nonzero integers is a
\emph{divisibility sequence} if
\[
 m\mid n\quad\Longrightarrow\quad a_m\mid a_n.
\]

The classical examples $a^n-b^n$, Lucas and Lehmer sequences, and Ward's elliptic divisibility sequences already bring together recurrence sequence theory and Diophantine geometry
\cite{Ward1948,EverestPoortenShparlinskiWard2003}.  For a prime $p$ dividing at least one term, its \emph{rank of apparition} is
\[
 r_p(\mathbf a):=\min\{n\ge1:p\mid a_n\}.
\]
The prime $p$ is a \emph{primitive divisor} of $a_n$ if
$r_p(\mathbf a)=n$, and the \emph{Zsigmondy set} is
\[
 \mathcal Z(\mathbf a)
 :=\{n\ge1:a_n\text{ has no primitive prime divisor}\}.
\]

Zsigmondy's theorem and Schinzel's number-field extension show that $a^n-b^n$ has a
primitive divisor at every sufficiently large index, subject to the classical
finite list of exceptions \cite{Zsigmondy1892,Schinzel1974}.  For Lucas and
Lehmer sequences, Bilu--Hanrot--Voutier proved the uniform bound $n>30$
\cite{BiluHanrotVoutier2001}.  More recent work studies structural
classification and sieve theory for divisibility sequences
\cite{Granville2022,BrowningVerzobio2024}.

Divisibility sequences encode multiplication in algebraic groups, and
Silverman's construction makes this geometry literal.  Let $\mathcal G$ be a
separated commutative group scheme over an arithmetic base, let $e$ be its
identity section, let $\mathcal I_e$ be the ideal defining $e$, and let $P$
have infinite order.  The pullbacks
\[
 \mathfrak d_{\mathcal G}(nP):=([n]P)^*\mathcal I_e
\]
form an ideal-valued divisibility sequence, called the denominator sequence
\cite[Definition~4 and Proposition~8]{Silverman2005}.  At a finite place $v$
of good finite-type reduction, put
\[
 d_v(P):=\ord(\overline P_v).
\]
Then
\[
 v\!\left(\mathfrak d_{\mathcal G}(nP)\right)>0
 \quad\Longleftrightarrow\quad
 [n]\overline P_v=e_v
 \quad\Longleftrightarrow\quad
 d_v(P)\mid n,
\]
and hence
\[
 d_v(P)=
 \min\left\{
 n\ge1:
 v\!\left(\mathfrak d_{\mathcal G}(nP)\right)>0
 \right\}.
\]
Thus $d_v(P)$ is the first return time to the identity modulo $v$.  The place
$v$ is primitive at index $n$ exactly when $d_v(P)=n$, while
$v(\mathfrak d_{\mathcal G}(nP))$ records the scheme-theoretic depth of that
return.  Denominator ideals, ranks of apparition, primitive divisors, and
exact reduction orders are therefore complementary records of the same local
phenomenon.  This viewpoint underlies elliptic nets and pairing formulas,
Diophantine definability, and arithmetic dynamics
\cite{Stange2007,Stange2011,LauterStange2009,Poonen2002,
IngramSilverman2009,Stange2026}.

For elliptic curves over number fields, the exact-order problem is completed
by the following theorem.

\begin{theorem}[Silverman; Cheon--Hahn; Stange]
\label{thm:intro-silverman-cheon-hahn}
Let $E/K$ be an elliptic curve over a number field and let $Q\in E(K)$ be
non-torsion.  Then there is $n_0=n_0(E,Q)$ such that, for every $n\ge n_0$,
there is a finite place $v$ of good reduction with
\[
        d_v^E(Q):=\ord(\overline Q_v)=n.
\]
Equivalently, every sufficiently large term of the associated elliptic
denominator sequence has a primitive prime-ideal divisor.
\end{theorem}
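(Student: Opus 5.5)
The statement is the elliptic-curve Zsigmondy theorem: for a non-torsion $Q\in E(K)$, every sufficiently large $n$ occurs as an exact reduction order. The plan is to compare the denominator ideals $\mathfrak d_E(nQ)$ with the product of the "primitive" contributions and show, by height growth, that the primitive part cannot be trivial for large $n$. Work with a model over $\OK$, or with the Néron model. For each finite place $v$ of good reduction, the formal group at $v$ gives the standard valuation law: if $d=d_v(Q)$ and $d\mid n$, then
\[
 v\bigl(\mathfrak d_E(nQ)\bigr)=v\bigl(\mathfrak d_E(dQ)\bigr)+v(n/d)\cdot e_v' ,
\]
with $e_v'$ bounded in terms of the ramification index (exactly $v(n/d)$ if $v\nmid p$ or $e_v<p-1$). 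In particular, places $v$ that are not primitive at index $n$ contribute only to $\mathfrak d_E(mQ)$ with $m$ a proper divisor of $n$, up to the bounded factor $v(n)$. Finitely many bad places contribute $O(\log n)$ or a bounded amount, using the local height decomposition.

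Next compare sizes. The canonical height gives
\[
 \log \mathrm N\,\mathfrak d_E(nQ)=2\hat h(Q)\,n^2\,[K:\Q]-O(\log n)\quad\text{(or }+o(n^2)\text{)}.
\]
This holds as long as the archimedean and bad local heights $\lambda_w(nQ)$ are $o(n^2)$. That in turn follows from Siegel's theorem, or more precisely from David's/Baker-type lower bounds on elliptic logarithms. For the quantitative version one needs $\lambda_\infty(nQ)\ge -C\log n$, and this is the main obstacle. I would invoke David's linear forms in elliptic logarithms to get $\log|x(nQ)|\ll \log n\cdot(\log\log n)^c$, which suffices.

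Now suppose $n$ has no primitive divisor. Then every $v$ dividing $\mathfrak d_E(nQ)$ has $d_v(Q)=n/\ell$ for some prime $\ell\mid n$. By the valuation law,
\[
 \mathfrak d_E(nQ)\ \Big|\ \prod_{\ell\mid n}\mathfrak d_E\bigl((n/\ell)Q\bigr)\cdot (n)^{c}.
\]
Taking norms gives
\[
 n^2\hat h(Q)\le \sum_{\ell\mid n}\frac{n^2}{\ell^2}\hat h(Q)+O\bigl(\omega(n)\log n\,(\log\log n)^c\bigr).
\]
Since $\sum_{\ell}\ell^{-2}<0.46<1$, this gives $0.54\,n^2\hat h(Q)\ll \log^2 n$, which forces $n$ to be bounded.

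This yields $n_0$, and the equivalence with primitive prime-ideal divisors is then immediate from the identity $d_v(P)=\min\{n:v(\mathfrak d(nP))>0\}$ displayed above. Finally, discard the finitely many bad places and the places where $e'_v$ misbehaves by enlarging $n_0$, so that the primitive $v$ obtained are places of good reduction.
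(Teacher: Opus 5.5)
Your skeleton is the standard Silverman/Cheon--Hahn argument, and that argument is all the paper relies on here: the paper gives no proof of this theorem, only the citations. The skeleton consists of bounding the non-primitive part of $\mathfrak d(nQ)$ by $\prod_{\ell\mid n}\mathfrak d((n/\ell)Q)$ times a small ideal, then comparing norms via canonical heights with an $o(n^2)$ archimedean error and the fixed gap $1-\sum_\ell\ell^{-2}>0$. Siegel's theorem already gives the $o(n^2)$ error, so David's bound is needed only for effectivity; it is not the main obstacle.

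The real defect is your local valuation law, which is exactly the point for which the paper credits Stange. You assert $v(\mathfrak d(nQ))=v(\mathfrak d(dQ))+e'_v\,v(n/d)$ with $e'_v$ bounded in terms of the ramification index. This fails at places $v\mid p$ with $e_v\ge p-1$, already for $p=2$ over $\Q$. In the formal group one only has $v([p]z)\ge\min\{v(z)+v(p),\,p\,v(z)\}$. The equality $v([p]z)=v(z)+v(p)$ is guaranteed only once $v(z)>v(p)/(p-1)$; at the threshold the leading terms can cancel.

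Here is an example. If $E/\Q$ has good ordinary reduction at $2$, then $\widehat E(2\Z_2)$ contains a point $\tau$ of order $2$ with $v_2(z(\tau))=1$. If $dQ$ is $2$-adically close to $\tau$, then $v_2(z(dQ))=1$, while $v_2(z(2dQ))$ can be made arbitrarily large. So the excess depends on $Q$, and no bound in terms of $e_v$ alone exists.

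Your proposed remedy, discarding such places by enlarging $n_0$, does not address this. Such a place divides $\mathfrak d(nQ)$ non-primitively for every proper multiple $n$ of $d_v(Q)$. What the norm inequality needs is a bound on its contribution, not its removal.

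The correct replacement is per-place and depends on the point. That suffices, because only finitely many places are affected: they lie above primes $p\le[K:\Q]+1$. At such a $v$, write $d=d_v(Q)$ and $n=dp^km$ with $p\nmid m$. Since $[m]$ is an automorphism of the formal group, $v(\mathfrak d(nQ))=w_k:=v(z(dp^kQ))$. The sequence $(w_k)$ is strictly increasing and satisfies $w_{k+1}\ge p\,w_k$ while $w_k\le v(p)/(p-1)$. Hence there is a first index $k_0$ with $w_{k_0}>v(p)/(p-1)$, and $w_{k_0}<\infty$ because $Q$ is non-torsion. For $k\ge k_0$ one has $w_k=w_{k_0}+(k-k_0)v(p)$. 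Therefore
\[
 v\bigl(\mathfrak d(nQ)\bigr)\le w_{k_0}+v(n)\qquad(n\ge1).
\]
So these places contribute $O_Q(\log n)$ to $\log\mathrm N\,\mathfrak d(nQ)$ whether or not they are primitive, and your final inequality is unchanged. At all other good places your exact law does hold, since $v(z)\ge1>e_v/(p-1)$.

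There is also a small slip. A non-primitive $v$ satisfies $d_v(Q)\mid n/\ell$ for some prime $\ell\mid n$, not $d_v(Q)=n/\ell$. The divisibility you display afterwards uses only the weaker statement.
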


Silverman proved the theorem over $\Q$ \cite[Proposition~10]{Silverman1988}, and Cheon--Hahn proved its
number-field extension
\cite[Main Theorem]{CheonHahn1999}.
The attribution to Stange records her corrected and complete formulation of the
local valuation input used in this circle of arguments (see~\cite[Theorem~6.1, Corollary~6.4, and Remark~6.5]{Stange2016Valuations}).

The theorem is the elliptic analogue of Bang--Zsigmondy and initiated work on
prime appearance, uniformity, and effectivity
\cite{EinsiedlerEverestWard2001,EverestShparlinski2005,
EverestMcLarenWard2006,Verzobio2023}.  It is natural to ask whether an
analogous exact-order theorem holds for higher-dimensional abelian or
semiabelian varieties.  The examples in this paper show that it fails
sharply in the semiabelian category, even for surfaces over $\Q$.  Primary
parts and prescribed valuations of reduction orders are studied in
\cite{Pink2004,Perucca2009}, while exact-order realization over global
function fields and in relative complex families is studied in
\cite{NguyenDangNguyen2026}.

\subsection{The Ailon--Rudnick problem and Silverman's conjecture}

A complementary problem asks not for a new divisor, but for an exact return to
the initial term.  For multiplicatively independent integers $a,b\ge2$, the
point $(a,b)\in\Gm^2(\Q)$ gives, away from primes dividing $ab$,
\[
 D_{n(a,b)}=\gcd(a^n-1,b^n-1).
\]
Bugeaud--Corvaja--Zannier~\cite{BugeaudCorvajaZannier2003} proved that, for every $\varepsilon>0$,
\[
 \gcd(a^n-1,b^n-1)\le \exp(\varepsilon n)
\]
for all sufficiently large $n$.  Corvaja--Zannier~\cite{CorvajaZannier2005} subsequently obtained quantitative refinements and broad
generalizations. Under the normalization
$\gcd(a-1,b-1)=1$, Ailon--Rudnick~\cite{AilonRudnick2004} conjectured that the gcd equals $1$
infinitely often and proved a stronger bounded-support theorem over
$\mathbf C[T]$.  Further structural reductions for
the integer sequence are developed in \cite{NguyenDangGCD2026}.

Silverman interpreted generalized gcds as heights on blowups.  Conditional on
Vojta's conjecture for the blowup of an abelian variety $A/\Q$ of dimension at
least two at the origin, he proved that a point with dense cyclic orbit
satisfies
\[
 \log D_{nP}\le \varepsilon n^2+O_{A,P,\varepsilon}(1)
\]
for every $\varepsilon>0$ \cite[Proposition~9]{Silverman2005}.
Related gcd--Vojta advances include \cite{Levin2019,Yasufuku2026}.
Motivated explicitly by the Ailon--Rudnick problem, Silverman formulated the
following exact recurrence conjecture \cite[Conjecture~10]{Silverman2005}.
Recall that for $P\in\mathcal G(\mathbf Z)$ one defines the positive integer
$D_P$ by
\[
        P^*\mathcal I_e=D_P\mathbf Z.
\]
Thus \(v_p(D_P)\) is the scheme-theoretic intersection multiplicity of the
sections \(P\) and \(e\) over \(p\).  In particular,
\[
        p\mid D_P
        \quad\Longleftrightarrow\quad
        \overline P_p=e_p.
\]
More generally,
\[
        D_{nP}\mathbf Z=([n]P)^*\mathcal I_e,
\]
so the equality \(D_{nP}=D_P\) means that every local contact multiplicity
of \(nP\) with the identity is exactly the same as that of \(P\).

\begin{conjecture}[Silverman]\label{conj:silverman}
Let $\mathcal G/\Z$ be a group scheme and let $P\in\mathcal G(\Z)$.  Assume
that the generic fiber $G=\mathcal G_{\Q}$ is an irreducible commutative
algebraic group of dimension at least two with no unipotent part and that
$\Z P$ is Zariski dense in $G$.  Then
\[
 D_{nP}=D_P
\]
for infinitely many positive integers $n$.
\end{conjecture}

The normalized Ailon--Rudnick conjecture is precisely the corresponding
split-torus special case.  Moreover, Silverman's return conjecture and a
naive higher-dimensional extension of the
Silverman--Cheon--Hahn theorem are mutually incompatible: a return
index cannot be an exact reduction order.

\begin{proposition}
\label{prop:intro-return-excludes-order}
Let $G/K$ be semiabelian, let $\Ner/\OK$ be its N\'eron lft-model, and let
$P\in G(K)$ have infinite order.  If $n>1$ and
\[
 \mathfrak d_{\Ner}(nP)=\mathfrak d_{\Ner}(P),
\]
then no finite place of semiabelian reduction satisfies $d_v(P)=n$.
\end{proposition}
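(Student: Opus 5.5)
We argue by contradiction and work one place at a time. Fix a finite place $v$ of semiabelian reduction. Then the identity component $\Ner^0_{\cO_v}$ of the N\'eron lft-model over the localization $\cO_v$ is a semiabelian scheme, in particular of finite type, and its special fiber is a commutative algebraic group over the residue field $k_v$. The paper's definition $d_v(P)=\ord(\overline P_v)$ presupposes that the reduction $\overline P_v$ is a torsion point of a finite-type group. That is automatic when $P$ reduces into the identity component: $\overline P_v\in\Ner^0(k_v)$ is torsion because $k_v$ is finite. If $v$ is a place where $P$ reduces outside the identity component, we interpret $d_v(P)$ as the order of $\overline P_v$ in the finite-type group attached to $v$ used in the definition; the only property we need is the criterion below.

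The key input is the valuation criterion recalled in the introduction. For the denominator ideal $\mathfrak d_{\Ner}(mP)=([m]P)^*\mathcal I_e$,
\[
 v\bigl(\mathfrak d_{\Ner}(mP)\bigr)>0 \iff [m]\overline P_v=e_v \iff d_v(P)\mid m .
\]
I would justify the first equivalence directly. The section $[m]P\in\Ner(\cO_v)$ (which exists by the N\'eron mapping property) meets the identity section $e$ in the special fiber exactly when the pullback of $\mathcal I_e$ is contained in the maximal ideal. The second equivalence is the definition of order in the group $\Ner(k_v)$. Since $e$ is a closed immersion, because $\Ner$ is separated, $\mathcal I_e$ is a genuine ideal sheaf, and the pullback is a nonzero ideal because $mP\neq e$, using that $P$ has infinite order.

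Now suppose $n>1$, $\mathfrak d_{\Ner}(nP)=\mathfrak d_{\Ner}(P)$, and $d_v(P)=n$ for some such $v$. Applying the criterion with $m=n$ gives $v(\mathfrak d_{\Ner}(nP))>0$. The hypothesis then gives $v(\mathfrak d_{\Ner}(P))>0$. Applying the criterion with $m=1$ gives $d_v(P)\mid 1$, so $d_v(P)=1\neq n$, which is a contradiction.

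The only real obstacle is the first equivalence at places where the special fiber of $\Ner$ is not connected or not of finite type. It must hold for the lft-model, and this is why I restrict to $\Ner^0$-reduction. The subtle point is that $\overline P_v$ might lie in a nonidentity component. In that case, though, $\overline P_v\neq e_v$, so $v(\mathfrak d_{\Ner}(P))=0$ still matches $d_v(P)>1$, and the argument goes through unchanged: the criterion only compares $[m]\overline P_v$ with $e_v$ inside the group $\Ner(k_v)$, and that comparison makes sense in any component.
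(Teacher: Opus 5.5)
Your proof is correct and is essentially the paper's argument. The paper notes directly that $d_v(P)=n>1$ forces $\overline P_v\ne e_v$, so $v(\mathfrak d_{\Ner}(P))=0$, while $[n]\overline P_v=e_v$ gives $v(\mathfrak d_{\Ner}(nP))>0$; this is your contradiction argument with the criterion applied at $m=n$ and $m=1$, and your closing worry about non-identity components is unnecessary, since $v\bigl(([m]P)^*\mathcal I_e\bigr)>0\iff[m]\overline P_v=e_v$ holds for any section of a separated lft group scheme.
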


\begin{proof}
If $d_v(P)=n>1$, then $\overline P_v\ne e_v$, so
$v(\mathfrak d_{\Ner}(P))=0$, whereas $[n]\overline P_v=e_v$, so
$v(\mathfrak d_{\Ner}(nP))>0$.
\end{proof}

Thus a proof of \SilvermanConj{} for a given dense point necessarily
produces infinitely many missing exact orders.  Silverman treated toric and
constant-$j$ function-field cases
\cite{Silverman2004Torus,Silverman2004Elliptic}; further function-field
primitive-divisor results appear in
\cite{IngramMaheSilvermanStangeStreng2012,NaskreckiStreng2020,
GhiocaHsiaTucker2018}.  Barroero--Capuano--Turchet proved the function-field
analogue for abelian and split semiabelian schemes and identified Ribet
sections as the obstruction to their stronger bounded-support conclusion
\cite[Theorem~1.3 and Remark~1.5]{BarroeroCapuanoTurchet2024}.  The
remaining geometrically nonsplit function-field case will be treated
separately in our subsequent work.  

Recently, Bara\'nczuk--Naskr\k{e}cki--Verzobio \cite{BaranczukNaskreckiVerzobio2026} study the squarefree reduction ideals attached to a point \(P\in A(K)\), where \(A/K\) admits a \(K\)-isogeny
\(\phi:A\to E^m\) and the coordinates of
\(\phi(P)=(Q_1,\ldots,Q_m)\) generate a subgroup of \(E(K)\) of
Mordell--Weil rank one. The rank-one coordinate hypothesis forces the cyclic orbit closure of \(P\) to have dimension at most one; hence it is not Zariski dense in \(A\) when \(\dim A\ge2\).  Their points therefore lie outside the hypotheses of Silverman's Conjecture~10.  

Over number fields, Silverman's conjecture
remains open in full generality.

\subsection{Ribet points}
Ribet points originated as deficient points on extensions of abelian varieties
by $\Gm$ \cite{JacquinotRibet1987}.  Their exceptional torsion behavior later
became central in relative Manin--Mumford and unlikely-intersection theory
\cite{Bertrand2011,BertrandMasserPillayZannier2016,
BertrandSchmidt2019,BarroeroKuhneSchmidt2023}.  In the universal setting over
Siegel moduli, the Poincar\'e torsor has a mixed-Shimura interpretation, and
Ribet loci are special subvarieties that are not accounted for by ordinary
subgroup schemes \cite{BertrandEdixhoven2020}.

More precisely, let
\[
 1\longrightarrow\Gm\xrightarrow{\iota}G_q\xrightarrow{\pi}A
 \longrightarrow0
\]
be the extension represented by $q\in A^\vee(K)$.  For a homomorphism
$\beta:A^\vee\to A$, put $\delta:=\beta-\widehat\beta$.  The normalized
Poincar\'e biextension gives a canonical section over the graph of $\delta$;
its value at $q$ is the Ribet point $R_\beta(q)$ above $\delta q$.  For every
$m\ge1$ and every $x\in A^\vee[m]$,
\[
 [m]r_\beta(x)=e_m^A(\beta x,x)\in\mu_m,
 \qquad [m^2]r_\beta(x)=0.
\]
Bertrand--Edixhoven proved the normalized construction over arbitrary bases
and the precise Weil-pairing identity
\cite[Propositions~3.1 and~3.3]{BertrandEdixhoven2020}.

The new arithmetic operation is elementary to state: translate the normalized
Ribet point by a fixed torsion point $t$ in the toric kernel and set
\[
 P=R_\beta(q)+t.
\]
Since
\([d_v(P)]\overline P_v=0\), one has
\[
        [d_v(P)]\overline{R_\beta(q)}_v
        =
        -[d_v(P)]\overline t_v.
\]
The normalized Ribet identity bounds the primary order of
\(\overline{R_\beta(q)}_v\) quadratically in the order of its projection,
whereas the primary order of \(t\) is fixed. 

\subsection*{Main results}

Let $e_\delta$ be the exponent of $\ker\delta$, write
$\ord(t)=\prod_\ell\ell^{a_\ell}$, and define
\[
 \rho_\ell(\delta,t)=
 \max\left\{0,
 \left\lceil\frac{a_\ell-2v_\ell(e_\delta)}2\right\rceil\right\},
 \qquad
 N_{\delta,t}=\prod_\ell\ell^{\rho_\ell(\delta,t)}.
\]
A sharp finite-group amplification lemma forces the explicit integer
$N_{\delta,t}$ to divide $d_v(P)$ at all but finitely many places.  If
$N_{\delta,t}\nmid n$, then $d_v(P)\nmid n$ at every nonexceptional place,
so $nP$ cannot reduce to the identity there.  At the finitely many remaining
places, one chooses $n$ in a suitable congruence class.  Multiplication by
$n$ is then either an automorphism of the formal group at the identity or
keeps the reduction away from the identity, and therefore preserves the
complete local contact multiplicity.  This gives
\[
 \mathfrak d_{\mathcal N}(nP)=\mathfrak d_{\mathcal N}(P).
\]
In this way, the same exceptional torsion-lifting property that makes Ribet
sections special in relative Manin--Mumford theory becomes a mechanism for
exact arithmetic returns of geometric divisibility sequences.
The first main theorem is proved in Theorem~\ref{thm:main}.

\begin{theorem}
\label{thm:intro-main}
Let $A/K$ be a positive-dimensional abelian variety over a number field $K$,
let $q\in A^\vee(K)$, and let
\[
 1\longrightarrow\Gm\xrightarrow{\iota}G_q\xrightarrow{\pi}A
 \longrightarrow0
\]
be the extension represented by $q$.  Let $\beta:A^\vee\to A$ be a
homomorphism, put $\delta=\beta-\widehat\beta$, and assume that $\delta$ is an
isogeny and that $\Z(\delta q)$ is Zariski dense in $A$.

For $t\in\Gm(K)_{\tors}$, identify $t$ with $\iota(t)$ and set
\[
 P=R_\beta(q)+t.
\]
Let $\Ner/\OK$ be the N\'eron lft-model of $G_q$.  Then $G_q$ is
geometrically nonsplit and $\Z P$ is Zariski dense in $G_q$.

If $N_{\delta,t}>1$, there is a finite set $S$ of finite places of $K$ such
that:
\begin{enumerate}[label=\textup{(\roman*)}]
\item
\[
 N_{\delta,t}\mid d_v(P)\qquad(v\notin S);
\]
\item if $N_{\delta,t}\nmid n$, then
\[
 \mathfrak d_{\Ner}(nP)\mathcal O_{K,S}=\mathcal O_{K,S};
\]
\item there is a squarefree integer $Q_P$, divisible by
$\rad(N_{\delta,t})$, such that
\[
 (n,Q_P)=1
 \quad\Longrightarrow\quad
 \mathfrak d_{\Ner}(nP)=\mathfrak d_{\Ner}(P).
\]
\end{enumerate}
\end{theorem}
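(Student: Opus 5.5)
The proof has three parts, matching the statement: the geometric assertions (nonsplit and dense), the local order computation giving (i), and the sieve over the exceptional places giving (ii) and (iii).

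\emph{Geometric part.} Since $\Z(\delta q)$ is Zariski dense in $A$ and $\dim A>0$, the point $\delta q$ has infinite order, so $q$ has infinite order. In particular $q\ne 0$ in $A^\vee(\overline K)$, so $G_q$ is geometrically nonsplit.

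For density, let $H$ be the Zariski closure of $\Z P$. Since $\pi(P)=\delta q$, the map $\pi(H)$ is onto $A$. If $H\cap\Gm$ is finite, then $H^\circ\to A$ is an isogeny $\phi$. The pullback of the extension along $\phi$ then splits, so $\widehat\phi(q)=0$. Composing with an isogeny $\psi$ such that $\phi\psi=[m]$ gives $[m]q=0$, contradicting the infinite order of $q$. Hence $\Gm\subset H$ and $H=G_q$.

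\emph{Part (i).} Let $S$ contain the places of bad reduction of $A$, the places above primes dividing $e_\delta\cdot\ord(t)$, and the places where the Néron lft-model of $G_q$ fails to be the extension of the abelian scheme by $\Gm$ determined by $q$. For $v\notin S$, $\overline t_v$ has exact order $\ord(t)$ in $\Gm(k_v)$.

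By Bertrand--Edixhoven over $\mathcal O_{K,S}$, the normalized Ribet section specializes to $r_\beta(x)$, where $x=\overline q_v$. I expect verifying this specialization compatibility to be the main obstacle. Concretely, one must identify $\Ner_{\mathcal O_v}$ with $G_{\overline q}$ and check that the normalized biextension trivialization commutes with reduction. I would settle it by working with the biextension over the abelian scheme $\mathcal A/\mathcal O_{K,S}$ from the start.

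Granting this, let $m=\ord(x)$ and $d=d_v(P)$. Since $\delta x$ is the image of $\overline P_v$ under $\pi$, we have $\ord(\delta x)\mid d$. Since $x$ is killed by $\ord(\delta x)\cdot e_\delta$, we get $v_\ell(m)\le v_\ell(e_\delta)+v_\ell(d)$. The Ribet identity $[m]r_\beta(x)\in\mu_m$ gives $[m^2]r_\beta(x)=0$. Therefore the $\ell$-part of the order of $[d]r_\beta(x)$ has exponent at most
\[
\max\{0,\,2v_\ell(m)-v_\ell(d)\}\le 2v_\ell(e_\delta)+v_\ell(d).
\]
On the other hand, $[d]r_\beta(x)=-[d]\overline t_v$, and when $v_\ell(d)<a_\ell$ this has $\ell$-exponent exactly $a_\ell-v_\ell(d)$. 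Comparing the two gives $a_\ell-v_\ell(d)\le 2v_\ell(e_\delta)+v_\ell(d)$, that is, $v_\ell(d)\ge\rho_\ell(\delta,t)$. The same bound holds trivially when $v_\ell(d)\ge a_\ell$. This proves $N_{\delta,t}\mid d_v(P)$.

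\emph{Parts (ii) and (iii).} For (ii): if $N_{\delta,t}\nmid n$, then $d_v(P)\nmid n$ for every $v\notin S$. Hence $[n]\overline P_v\ne e_v$, so $v(\mathfrak d_\Ner(nP))=0$ for all such $v$.

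For (iii), let $Q_P$ be the product of $\rad(N_{\delta,t})$ and, for each $v\in S$, the following primes:
\begin{enumerate}[label=(\alph*)]
\item if $\overline P_v$ lies in the identity component's complement and has infinite order in the (possibly infinite) component group, no primes are needed, since then $n\overline P_v\ne e_v$ for all $n\ge1$;
\item if $\overline P_v\ne e_v$ has finite order $d_v$, the primes dividing $d_v$, so that $(n,d_v)=1$ forces $[n]\overline P_v\ne e_v$;
\item if $\overline P_v=e_v$, the residue characteristic $p_v$, so that $[n]$ is an automorphism of the formal group of $\Ner$ at $v$ and therefore preserves the local contact multiplicity.
\end{enumerate}
Then for $(n,Q_P)=1$ the valuations of $\mathfrak d_\Ner(nP)$ and $\mathfrak d_\Ner(P)$ agree at every $v\in S$. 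For $v\notin S$, both valuations vanish: the one for $nP$ by (ii), since $\rad(N_{\delta,t})\mid Q_P$ and $N_{\delta,t}>1$ give $N_{\delta,t}\nmid n$, and the one for $P$ because $d_v(P)\ge N_{\delta,t}>1$.
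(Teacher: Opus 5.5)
Your proposal is correct and is essentially the paper's proof with its lemmas inlined. The density argument is Lemma~\ref{lem:dense-lift}; add the one-line remark that $\pi(H^\circ)=A$, because $H/H^\circ$ is finite and $A$ is connected. The specialization issue you flag is settled by spreading out (Corollary~\ref{cor:ribet-reduction}), your primewise comparison of $[d]r_\beta(x)$ with $-[d]\overline t_v$ is Lemma~\ref{lem:amplification} with $c=2$, $b=2v_\ell(e_\delta)$, and your three-case choice of $Q_P$ over $S$ is Proposition~\ref{prop:finite-cone-return} together with Lemma~\ref{lem:local-freezing}.
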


Here an extension of algebraic groups over $K$ is called
\emph{geometrically nonsplit} if it remains nonsplit after base change to
$\overline K$.

Consequently, all but finitely many rational primes, and every positive power
of each prime outside a fixed finite set, are full return indices and missing
exact reduction orders.  Thus Theorem~\ref{thm:intro-main} is considerably
stronger than the infinitude predicted by \SilvermanConj{}: prime return
indices are cofinite among the rational primes.  The local-to-global argument
is separated from the geometry of a Ribet point in
Proposition~\ref{thm:abstract-shift}, while
Proposition~\ref{prop:finite-cone-return} permits finitely many alternative
divisibility floors.

The second main theorem performs a quadratic descent in which the split
toric kernel is replaced by its norm-one form.  We first fix the two numerical
invariants used in its
statement.  Choose a nonempty open subscheme
$U\subseteq\Spec\mathcal O_K$ over which $G$ extends to a semiabelian scheme
$\mathcal G_U/U$ and $P$ extends to a section.  Define the exact
reduction-order spectrum by
\[
 \mathscr R(G,P):=\{d_v(P):v\in U\}.
\]
Shrinking $U$ removes only finitely many places and hence changes this set by
at most finitely many integers.  For each rational prime $\ell$, put
\[
 u_\ell(P):=
 \sup\left\{
 a\in\mathbf Z_{\ge0}:
 \ell^a\mid d_v(P)
 \text{ for all but finitely many }v\in U
 \right\}
 \in\mathbf Z_{\ge0}\cup\{\infty\}.
\]
If every $u_\ell(P)$ is finite and all but finitely many are zero, define
\[
 U(P):=\prod_\ell\ell^{u_\ell(P)}
\]
and call it the \emph{eventual reduction divisor}.  Equivalently, $U(P)$ is
the greatest positive integer dividing $d_v(P)$ for all but finitely many
$v$.  Both $u_\ell(P)$ and $U(P)$, whenever defined, are unchanged by adding
or deleting finitely many places.

Now, let $L=\Q(\sqrt{-3})$, let
$T=R^1_{L/\Q}\Gm$ be the norm-one torus, and let
\[
 E:\ y^2=x^3-2,
 \qquad Q_0=(3,5),
 \qquad \omega=\frac{-1+\sqrt{-3}}2.
\]
Over $L$, the endomorphism $[\omega]$ has antisymmetric part
$[\sqrt{-3}]$.  Galois conjugation changes the extension class by a sign, and
the sign is exactly compensated by inversion on the split form of $T$. 

\begin{theorem}
\label{thm:intro-surface-Q}
There exist a geometrically nonsplit semiabelian surface
\[
 1\longrightarrow R^1_{L/\Q}\Gm\longrightarrow G\longrightarrow E
 \longrightarrow0
\]
and a point $P_{\mathrm{surf}}\in G(\Q)$ such that
\[
 \overline{\Z P_{\mathrm{surf}}}^{\,\Zar}=G,
 \qquad U(P_{\mathrm{surf}})=2.
\]
There is a squarefree integer $Q_{\mathrm{surf}}$, divisible by $2$, for which
\[
 (n,Q_{\mathrm{surf}})=1
 \quad\Longrightarrow\quad
 \mathfrak d_{\Ner}(nP_{\mathrm{surf}})
 =\mathfrak d_{\Ner}(P_{\mathrm{surf}}).
\]
In particular, all but finitely many rational primes and all their positive
powers are return indices and missing exact reduction orders.
\end{theorem}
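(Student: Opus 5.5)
The plan is to build everything over $L=\Q(\sqrt{-3})$, where Theorem~\ref{thm:intro-main} applies directly, and then descend the group and the point to $\Q$ by a Galois twist of the toric kernel.

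\textbf{Step 1: the setup over $L$.} The curve $E:y^2=x^3-2$ has $j=0$, so over $L$ it has CM by $\Z[\omega]$. Identify $E^\vee=E$ via the principal polarization and put $q=Q_0\in E(\Q)$, which has infinite order. Take $\beta=[\omega]$. Its Rosati dual is $\widehat\beta=[\bar\omega]$, so
\[
 \delta=[\omega-\bar\omega]=[\sqrt{-3}].
\]
This $\delta$ is an isogeny of degree $3$, so $e_\delta=3$ and $v_2(e_\delta)=0$. Since $E$ is a curve, $\Z(\delta q)$ is Zariski dense. Choose $t=-1\in\Gm(L)_{\tors}$, so that $a_2=1$ and
\[
 \rho_2(\delta,t)=\left\lceil \tfrac12\right\rceil=1,\qquad N_{\delta,t}=2.
\]
Theorem~\ref{thm:intro-main} then gives, for $P_L=R_\beta(q)+t\in G_{q,L}(L)$, the following: $G_{q,L}$ is geometrically nonsplit, $\Z P_L$ is Zariski dense, and $2\mid d_w(P_L)$ for almost all places $w$ of $L$.

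\textbf{Step 2: descent to $\Q$.} Let $\sigma$ generate $\Gal(L/\Q)$. Conjugating the data by $\sigma$ replaces $\beta$ by $[\bar\omega]$, hence $\delta$ by $-\delta$. Since $q$ is rational, the extension $G_q$ itself is $\sigma$-stable, but the canonical biextension section over the graph of $-\delta$ is the inverse-transport of the one over the graph of $\delta$. I would verify, using functoriality of the normalized Poincar\'e biextension from Bertrand--Edixhoven (symmetry and behaviour under $\beta\mapsto\widehat\beta$), that the map $\sigma^*G_q\to G_q$ inducing inversion on $\Gm$ and the identity on $E$ satisfies the cocycle condition. I would also check that it sends $\sigma(R_\beta(q))$ to $R_\beta(q)$. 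This cocycle yields a $\Q$-form $G$ whose torus is the twist of $\Gm$ by inversion, namely $R^1_{L/\Q}\Gm$. The points $R_\beta(q)$ and $t=-1$ are fixed by the twisted action, so they descend to $P_{\mathrm{surf}}\in G(\Q)$. Geometric nonsplitness and Zariski density are insensitive to base change, so they transfer from Step 1.

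I expect this compatibility check to be the main obstacle. The claim that the sign change of the extension class is exactly compensated by inversion, together with the invariance of the Ribet point, is stated in the paper just before the theorem, but it still has to be derived from the normalization of the Ribet section.

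\textbf{Step 3: reduction orders and returns.} For a prime $p$ of good reduction and $w\mid p$, the reduction of $G_L$ at $w$ is the base change of that of $G$ at $p$. Hence $d_p(P_{\mathrm{surf}})=d_w(P_L)$, and so $2\mid d_p$ for almost all $p$.

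For the return statement I would redo the argument of Theorem~\ref{thm:intro-main}(ii)--(iii) directly over $\Q$, using Proposition~\ref{thm:abstract-shift}. Away from a finite set, $2\nmid n$ forces $d_p\nmid n$. At the finitely many remaining primes (those of bad reduction, $p=2,3$, and the ramified prime), impose coprimality of $n$ with the relevant local component-group and formal-group data. This makes $[n]$ either an automorphism of the formal group at the identity or keeps $n P_{\mathrm{surf}}$ away from the identity. The resulting primes are collected into the squarefree $Q_{\mathrm{surf}}$, with $2\mid Q_{\mathrm{surf}}$.

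\textbf{Step 4: the upper bound $U(P_{\mathrm{surf}})=2$.} It remains to show $4\nmid d_p$ for infinitely many $p$. Choose primes $p$ split in $L$ for which $\overline{Q_0}_p$ has odd order $m$. By Kummer theory for $E$ and the $2$-power division tower, in the style of Pink and Perucca, such primes have positive density.

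For such $p$, the Weil-pairing identity gives $[m]\overline{R}=e_m(\beta x,x)$ with $x=\overline{Q_0}_p\in E[m]$; this is a torsion point of odd order in the torus, and multiplying by that order kills $\overline R$. So $\overline R_p$ has odd order $m'$. Then
\[
 [m']\overline{P}_p=[m']\overline t_p=(-1)^{m'}=-1\neq e,
\]
while $[2m']\overline P_p=e$. Hence $v_2(d_p)=1$ for these $p$, which gives $U(P_{\mathrm{surf}})=2$.
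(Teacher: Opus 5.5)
Your Step~2 does not merely need verification: it is false for your choice $q=Q_0$. Because $Q_0\in E(\Q)$, the extension $G_{Q_0}$ already has a $\Q$-structure with split kernel, so $\sigma^*G_{Q_0}=G_{Q_0}$ canonically. A descent datum $\sigma^*G_{Q_0}\to G_{Q_0}$ that is inversion on $\Gm$ and the identity on $E$ is therefore exactly an isomorphism of extensions $\operatorname{inv}_*G_{Q_0}\simeq G_{Q_0}$, i.e.\ $G_{-Q_0}\simeq G_{Q_0}$. That forces $2Q_0=0$, which is false. Put differently, any extension of $E$ by $R^1_{L/\Q}\Gm$ over $\Q$ has an $L$-class $q$ with $\sigma(q)=-q$, and a nonzero rational point is never anti-invariant.

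The point fails to descend as well. $R_\beta(Q_0)$ lies over $\delta Q_0=[\sqrt{-3}]Q_0$, and $\sigma(\delta Q_0)=-\delta Q_0\ne\delta Q_0$, so no datum that is the identity on $E$ can fix it. The sign that inversion on the torus absorbs must sit in the extension class itself, not only in $\beta$.

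The paper takes $q=\delta Q_0$ instead. Then $\sigma(q)=-q$ and $\sigma^*G_q=G_{-q}$, and the descent datum is $G_{-q}\to\operatorname{inv}_*G_{-q}\xrightarrow{\jmath_q}G_q$. Its cocycle condition holds because $\Hom(E,\Gm)=0$. Lemma~\ref{lem:ribet-inversion} gives $c(\sigma R_\beta(q))=c(R_{\beta^\dagger}(-q))=R_\beta(q)$, and the descended point projects to $\delta^2Q_0=[-3]Q_0\in E(\Q)$. If you want to keep $q=Q_0$, you could twist all of $G_{Q_0}$ by $[-1]$. This does fix $R_\beta(Q_0)+t$, since uniqueness of normalized sections gives $R_{\beta^\dagger}=-R_\beta$. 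But the quotient then becomes the quadratic twist of $E$ by $-3$, not $E$ as the statement requires.

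Step~4 is also incomplete. $U(P_{\mathrm{surf}})=2$ requires $u_\ell=0$ for every odd prime $\ell$, not just $u_2\le 1$; without this, $U$ is not even known to be defined. The repair uses the same device. Apply Perucca's theorem to prescribe valuation zero at both $2$ and $\ell$ for $m_w=\ord(\overline q_w)$. Then $\ord(\overline R_w)\mid m_w^2$ is prime to $2\ell$, and $d_w(P)\mid 2\ord(\overline R_w)$ is prime to $\ell$ on a set of positive density.
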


This places the phenomenon in dimension two over $\Q$, the smallest dimension
allowed by Silverman's conjecture.  The descent principle is proved in
Theorem~\ref{thm:quadratic-descent}, and the explicit surface in
Theorem~\ref{thm:surface-Q}.

A complementary explicit construction uses the rank-two elliptic curve
\[
 E_{389}:\ y^2+y=x^3+x^2-2x,
 \qquad Q_1=(0,0),\quad Q_2=(1,0).
\]
The resulting geometrically nonsplit threefold over $\Q$ has a dense point
$P$ satisfying
\[
 \mathfrak d_{\Ner}(P)=\Z,
 \qquad d_2(P)=5,
 \qquad U(P)=2.
\]
Hence $D_{nP}=1$ for every $n$ coprime to one fixed integer.  When the
antisymmetric map is an isomorphism, we also obtain the exact local formula
\[
 d_v(P)=m_v\,
 \ord_{k_v^\times}\!\left(
 e_{m_v}^A(\beta\overline q_v,\overline q_v)
 \overline\zeta_v^{\,m_v}
 \right),
 \qquad m_v=\ord(\overline q_v).
\]
This formula determines the eventual reduction divisor in our principal
families.

The applications reveal several phenomena absent from products of abelian
varieties and tori.
\begin{enumerate}[label=\textup{(\alph*)}]
\item For every squarefree $M>1$, after a cyclotomic base extension there is a
      point $P_M$ with Zariski-dense cyclic orbit and $U(P_M)=M$.
\item On the same geometric extension, the unshifted point $R$ has $U(R)=1$,
      while the torsion translate $R+\iota(\zeta_M)$ has $U=M$.
\item The exact-order and primitive-divisor index sets have upper density at
      most $1/M$, while the Zsigmondy set and a bounded-support set have lower
      density at least $1-1/M$.
\item Every positive integer, not necessarily squarefree, can be forced to
      divide almost every reduction order, although exact nonsquarefree
      realization remains open.
\item There are examples over $\Q$ in every dimension at least two, and a
      Zariski-dense set of points of unbounded projected height can share one
      and the same full denominator ideal.
\end{enumerate}
Perucca \cite{Perucca2009} proved that, for a point on a product of an abelian variety and a torus,
the component number of its algebraic orbit closure is the greatest integer
dividing almost every reduction order.  Our orbit closures
are connected, but $U(P_M)=M$ is unbounded.  The missing datum is genuinely
mixed: it is carried by the extension, the biextension lift, and the torsion
translation.

A torus that is nonsplit over $K$ nevertheless becomes split over
$\overline K$; this is distinct from a geometrically nonsplit semiabelian
extension.  For a geometrically dense torus point, Perucca's theorem gives
$U(P)=1$ and excludes every finite covering of almost all reduction orders by
fixed divisibility conditions.  Thus the universal-divisor mechanism of the
present paper is intrinsically mixed.  The dense pure-torus return problem
remains open even in dimension two, to the best of our knowledge; Silverman's
algebraic-integer examples give concrete benchmarks
\cite{Silverman2006AlgebraicIntegers}.

The dependence of $U(P)$ on torsion translation points toward the one-motive
\[
 [\Z\xrightarrow{P}G]
\]
and its affine $\ell$-adic realization.  Jossen's work on Galois and
Mumford--Tate groups of one-motives \cite{Jossen2014} provides a natural framework for such an
intrinsic description.

\medskip
\noindent\textbf{Organization.}
\Cref{sec:denominators} develops full denominator ideals, local freezing,
exact anti-covering, and return criteria, together with the short conditional
Ailon--Rudnick quotient observation.  \Cref{sec:ribet} constructs normalized
Ribet sections and proves the torsion-shift amplification principle and the
main arithmetic theorem.  \Cref{sec:examples} gives the quadratic descent,
the rational surface, the explicit $389\mathrm a1$ threefold, and examples in
every dimension.  The applications to eventual reduction divisors, sparse
exact-order spectra, and primitive divisors are developed alongside the main
theorem.

\subsection*{Acknowledgements}
The author thanks Quang-Khai Nguyen for useful discussions.  He is grateful to
Daniel Bertrand for emphasizing the positive arithmetic role of Ribet points
in the present work and for helpful comments on the abstract; to Joseph H.
Silverman for pointing out Stange's correction of the local elliptic valuation
argument and the exceptional bounded-norm phenomenon; and to Umberto Zannier
for drawing attention to the quantitative refinements and generalizations of
the gcd estimate due to Corvaja--Zannier.  The author gratefully acknowledges
the support of the Morningside Center of Mathematics, Chinese Academy of
Sciences. 

\section{Denominator ideals and local criteria}
\label{sec:denominators}

Throughout, $\N=\{1,2,3,\ldots\}$.  We write the semiabelian group law
additively, but retain multiplicative notation inside a distinguished toric
kernel.  The letter $K$ denotes a number field, $\OK$ its ring of integers,
and $v$ a finite place.  We write
$K_v$, $\cO_v$, $\mathfrak m_v$, and $k_v$ for the completion, valuation
ring, maximal ideal, and residue field.  For nonzero integral ideals
$I,J\subseteq\OK$, our convention is
\[
 I\mid J\quad\Longleftrightarrow\quad J\subseteq I,
 \qquad v(I)=\ord_v(I).
\]

Let $G/K$ be a semiabelian variety and let $\Ner/\OK$ be its N\'eron
lft-model \cite[Chapter~10]{BLR1990}.  Every point of $G(K)$ extends uniquely
to a section of $\Ner$.  Denote the identity section by $e$ and its ideal
sheaf by $\mathcal I_e$.

\begin{definition}[{\cite[Definition~4]{Silverman2005}}]\label{def:denominator}
For $X\in G(K)$ with $X\ne e$ on the generic fiber, define
\[
 \dd_{\Ner}(X):=
 \operatorname{im}\left(
 X^*\mathcal I_e\longrightarrow X^*\mathcal O_{\Ner}=\OK
 \right).
\]
Equivalently, this is the ideal defining the scheme-theoretic inverse image of
the identity section along $X$.  If $K=\Q$, let $D_X$ be its positive
generator:
\[
 D_X\Z=\dd_{\Ner}(X).
\]
\end{definition}

Since the model is separated and the two generic sections are distinct,
$\dd_{\Ner}(X)$ is a nonzero integral ideal.  If
$\dd_{\Ner,v}(X)\subseteq\cO_v$ is its localization, then
\[
 v\bigl(\dd_{\Ner}(X)\bigr)
 =\length_{\cO_v}\bigl(\cO_v/\dd_{\Ner,v}(X)\bigr),
\]
the local scheme-theoretic contact length with the identity.  Whenever
$(\dd_{\Ner}(nP))_{n\ge1}$ is used, $P$ is assumed to have infinite order.
For such a fixed point $P$, define its \emph{full return set} by
\[
 \mathscr T(G,P)
 :=\{n\ge1:\dd_{\Ner}(nP)=\dd_{\Ner}(P)\}.
\]
The elements of $\mathscr T(G,P)$ are called \emph{full return indices}.

\begin{lemma}\label{lem:ideal-divisibility}
If $P\in G(K)$ has infinite order and $m\mid n$, then
\[
 \dd_{\Ner}(mP)\mid\dd_{\Ner}(nP).
\]
\end{lemma}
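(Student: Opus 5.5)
The plan is to prove the divisibility locally at each finite place $v$ and reduce it to a statement about ideals in the completed local ring of $\Ner$ along the identity section. Write $n=km$ and put $X=mP$, so that $nP=[k]X$. Since $P$ has infinite order, neither $X$ nor $[k]X$ is the identity on the generic fibre, and both ideals are nonzero. With the convention $I\mid J\Leftrightarrow J\subseteq I$, it suffices to show, for every finite place $v$,
\[
 \dd_{\Ner,v}([k]X)\subseteq \dd_{\Ner,v}(X).
\]

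First, I will use that multiplication by $k$ extends from $G$ to a morphism $[k]:\Ner\to\Ner$ of $\OK$-group schemes. This follows from the Néron mapping property of the lft-model \cite[Chapter~10]{BLR1990}. Because this morphism is a group homomorphism, it sends the identity section to itself, so $[k]\circ e=e$. By the uniqueness of extensions of points, the section of $\Ner$ extending $[k]X$ equals $[k]\circ X$. Consequently
\[
 ([k]X)^*\mathcal I_e = X^*\bigl([k]^*\mathcal I_e\bigr)\cdot\OK,
\]
where the right side is understood via images in $\OK$.

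Second, the relation $[k]\circ e=e$ means that the closed subscheme $e(\Spec\OK)$ lies in the scheme-theoretic inverse image $[k]^{-1}(e(\Spec\OK))$. The ideal of that inverse image is generated by $[k]^{\#}\mathcal I_e$. Hence, locally on $\Ner$,
\[
 [k]^{\#}\mathcal I_e\cdot\mathcal O_{\Ner}\subseteq\mathcal I_e.
\]
Pulling this inclusion back along $X$ and taking images in $\OK$ gives $\dd_{\Ner}([k]X)\subseteq\dd_{\Ner}(X)$. This is the required divisibility.

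An equivalent way to see this is through the pullback of the identity subscheme. The scheme $X^{-1}(e)$ is a closed subscheme of $\Spec\OK$, and it is contained in $([k]\circ X)^{-1}(e)$, because $X$ maps $X^{-1}(e)$ into $e$ and $[k]$ preserves $e$. Containment of closed subschemes is exactly the reverse containment of their ideals, which is the claim.

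The only real point needing care is the first step. One must confirm that $[k]$ extends to all of the lft-model, which may be non-quasi-compact, and that the extension respects the identity section. Both follow from the universal property: the Néron lft-model is a group scheme whose group law extends the group law on $G$. Everything after that is formal pullback of ideals, so I expect no further obstacle.
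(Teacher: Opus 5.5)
Your proposal is correct and follows the paper's proof: both write $n=mr$ (your $k$), note that $[r]$ fixes the identity section so $[r]^*\mathcal I_e\subseteq\mathcal I_e$, and pull this inclusion back along $mP$ to get $\dd_{\Ner}(nP)\subseteq\dd_{\Ner}(mP)$. Your extra care about extending $[k]$ to the lft-model and about ideal images only spells out what the paper uses implicitly.
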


\begin{proof}
Write $n=mr$.  Since $[r](e)=e$, one has
$[r]^*\mathcal I_e\subseteq\mathcal I_e$.  Pulling back along $mP$ gives
\[
 \dd_{\Ner}(nP)\subseteq\dd_{\Ner}(mP),
\]
which is the asserted ideal divisibility; compare
\cite[Proposition~8]{Silverman2005}.
\end{proof}

Reduction orders are naturally computed on semiabelian enlargement,
whereas the global denominator sequence lives on the N\'eron lft-model.  The following comparison retains complete local multiplicities.

\begin{lemma}
\label{lem:model-comparison}
Let $U\subseteq\Spec\OK$ be a nonempty open subscheme over which $G$ extends
to a semiabelian scheme $\mathcal G_U/U$.  After possibly shrinking $U$, the
canonical morphism
\[
 j:\mathcal G_U\longrightarrow\Ner|_U
\]
is an open immersion onto the maximal open subgroup of $\Ner|_U$ whose fibers
are connected.  In particular, it identifies the formal completions along the
identity sections.  Consequently, for every $X\in\mathcal G_U(U)$ and every
$v\in U$:
\begin{enumerate}[label=\textup{(\roman*)}]
\item the local denominator ideal computed in $\mathcal G_U$ equals the one
      computed in $\Ner$;
\item the order of $\overline X_v$ in $\mathcal G_U(k_v)$ equals its order in
      $\Ner(k_v)$.
\end{enumerate}
\end{lemma}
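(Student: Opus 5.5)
The plan is to use the Néron mapping property together with the fact that, over a Dedekind base, the identity component of the Néron lft-model of a semiabelian variety with semiabelian reduction is the semiabelian model itself (\cite[Chapter~10]{BLR1990}).

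First, shrink $U$ so that $\mathcal G_U$ is smooth and separated with connected fibers. By the Néron mapping property of $\Ner|_U$ (\cite[Proposition~10.1.3]{BLR1990}), the generic identity $G\to G$ extends uniquely to a morphism $j:\mathcal G_U\to\Ner|_U$ of smooth group schemes over $U$. Since $j$ is a homomorphism on generic fibers and both schemes are flat and separated, $j$ is a homomorphism. Next, shrink $U$ further so that $\mathcal G_U$ is itself the Néron lft-model of $G$ over $U$. This is possible because over a dense open set where $G$ has good reduction data, a semiabelian scheme with abelian part an abelian scheme and toric part a split torus (after a finite étale cover, then descend) satisfies the extension property. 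One route is to apply \cite[Proposition~10.1.7]{BLR1990}, which identifies the identity component of the lft Néron model with the semiabelian model wherever the latter exists. With this, $j$ is an isomorphism onto $\Ner^0|_U$, the open subgroup with connected fibers.

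Since $j$ is an open immersion preserving identity sections, it identifies the completions of $\mathcal G_U$ and $\Ner$ along $e$. For (i), let $X\in\mathcal G_U(U)$. Then $j\circ X$ is the unique extension of the generic point to $\Ner$, and $j^{-1}(e_{\Ner})=e_{\mathcal G}$ scheme-theoretically, because $j$ is an open immersion and the identity section of $\Ner$ lies in its image. Therefore $(j X)^*\mathcal I_{e,\Ner}=X^*\mathcal I_{e,\mathcal G}$ as ideals of $\cO_v$, and localizing at $v$ gives equality of local denominator ideals and contact lengths. For (ii), $j$ induces an injective group homomorphism $\mathcal G_U(k_v)\to\Ner(k_v)$, because open immersions are monomorphisms. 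Injective homomorphisms preserve orders, so the order of $\overline X_v$ is the same in both groups.

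The main obstacle is the middle step: justifying cleanly that after shrinking $U$ the semiabelian model coincides with the identity component of the Néron lft-model, especially the toric part when the torus is nonsplit over $K$. There I would pass to a finite Galois extension splitting the torus that is unramified over $U$, apply the split case from \cite{BLR1990}, and descend using the uniqueness of Néron models under étale base change.
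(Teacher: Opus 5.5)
Your outline matches the paper's proof. The N\'eron mapping property produces $j$. The comparison result \cite[10.1/7]{BLR1990} identifies $\mathcal G_U$ with the neutral-component subgroup $\Ner^0|_U$ after shrinking $U$. The scheme-theoretic equality $j^{-1}(e_{\Ner})=e_{\mathcal G_U}$, valid for an open immersion of groups, gives (i) and the identification of formal completions. Injectivity of $\mathcal G_U(k_v)\to\Ner(k_v)$ gives (ii).

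There is one false step you should delete. You claim that after further shrinking $\mathcal G_U$ becomes the N\'eron lft-model itself, because a semiabelian scheme satisfies the extension property. No shrinking achieves this when $G$ has a nontrivial toric part.
\begin{itemize}
\item For $G=\Gm$ over $\Q$, the point $p\in\Gm(\Q_p)$ does not extend to $\Gm(\Z_p)$. The N\'eron lft-model over $\Z_{(p)}$ has special fibre $\coprod_{n\in\Z}\Gm$ over $\F_p$, so it is not even of finite type.
\item In general, pass to an unramified extension splitting the torus and put a uniformizer into a cocharacter. The resulting point cannot extend to $\mathcal G_U$, since such an extension would land in the closed split subtorus, whose integral points have unit coordinates. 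The N\'eron property is tested on all smooth, in particular \'etale, test schemes, so a nonsplit torus does not escape this.
\end{itemize}
The claim also contradicts your own next sentence. If $\mathcal G_U$ were the N\'eron lft-model, $j$ would be an isomorphism onto all of $\Ner|_U$, not onto $\Ner^0|_U$.

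The correct and sufficient input is the identity-component statement you then cite. Parts (i) and (ii) only need $j$ to be an open immersion of groups carrying $e$ to $e$. Likewise, in your last paragraph any Galois descent should target the identification $\mathcal G_U\simeq\Ner^0|_U$, not an identification with $\Ner|_U$. It can use that N\'eron lft-models commute with \'etale base change.
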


\begin{proof}
The N\'eron mapping property gives the canonical morphism $j$.  After shrinking
$U$, the standard comparison theorem identifies $\mathcal G_U$ with the open
subgroup $\Ner_U^\circ\subseteq\Ner|_U$ formed by the neutral components of
the fibers; equivalently, it is the maximal open subgroup having connected
fibers over $U$.  See \cite[10.1/7 and 10.1/9]{BLR1990} and the description of
the connected-fiber open subgroup in \cite[\S2, p.~6]{Suzuki2019}.
Since $j$ is an open immersion carrying the identity section of $\mathcal G_U$
to that of $\Ner|_U$, one has scheme-theoretically
\[
 j^{-1}(e_{\Ner})=e_{\mathcal G_U},
 \qquad
 j^*\mathcal I_{e,\Ner}=\mathcal I_{e,\mathcal G_U}.
\]
Pulling this equality back along $X$ proves (i).  Equivalently, if
$\overline X_v\ne e_v$, both local pullback ideals are the unit ideal; if
$\overline X_v=e_v$, the same conclusion, including equality of complete
contact multiplicities, follows from the induced isomorphisms
\[
 \widehat{\mathcal O}_{\mathcal G_U,e_v}
 \simeq\widehat{\mathcal O}_{\Ner,e_v},
 \qquad
 \widehat{\mathcal I}_{e,\mathcal G_U}
 \simeq\widehat{\mathcal I}_{e,\Ner}.
\]
Finally, $\mathcal G_U(k_v)\hookrightarrow\Ner(k_v)$ is an injective group
homomorphism and therefore preserves the exact order of $\overline X_v$,
proving (ii).
\end{proof}

After shrinking $U$, the N\'eron section attached to a fixed $P\in G(K)$
factors through $\mathcal G_U$.  For $v\in U$, put
\[
 d_v(P):=\ord(\overline P_v)
 \quad\text{in }\mathcal G_U(k_v).
\]
The group $\mathcal G_U(k_v)$ is finite, and changing $U$ deletes only
finitely many places.

\begin{lemma}\label{lem:reduction-criterion}
Let $P\in G(K)$ have infinite order, let $v\in U$, and let $n\ge1$.  Then
\[
 v\bigl(\dd_{\Ner}(nP)\bigr)>0
 \quad\Longleftrightarrow\quad
 [n]\overline P_v=e_v
 \quad\Longleftrightarrow\quad
 d_v(P)\mid n.
\]
In particular, if $d_v(P)>1$, then $v(\dd_{\Ner}(P))=0$.
\end{lemma}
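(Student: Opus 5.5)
The plan is to reduce everything to the semiabelian model $\mathcal G_U$ by means of \cref{lem:model-comparison}, and then to read off contact with the identity on the special fiber. Since $P$ extends to a section of $\mathcal G_U$ over $U$ (after shrinking $U$ as in the paragraph defining $d_v(P)$), so does $nP=[n]\circ P$. By \cref{lem:model-comparison}(i), $v(\dd_{\Ner}(nP))$ equals the valuation of the pullback $(nP)^*\mathcal I_{e,\mathcal G_U}$ localized at $v$. By part (ii) of the same lemma, the order of $\overline P_v$ is the same whether it is computed in $\mathcal G_U(k_v)$ or in $\Ner(k_v)$.

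For the first equivalence, I would note that $(nP)^*\mathcal I_e\otimes\cO_v$ is the ideal cutting out the scheme-theoretic intersection of the sections $nP$ and $e$ over $\Spec\cO_v$. This ideal is proper, that is $v(\cdot)>0$, exactly when the closed point of $\Spec\cO_v$ lies in the inverse image of $e$. That in turn happens exactly when the reductions $\overline{nP}_v$ and $e_v$ coincide as $k_v$-points. Indeed, $\cO_v/\dd_v$ is nonzero precisely when the closed point maps into the closed subscheme $e(U)$, and the fiber of $e(U)$ over $v$ is the single point $e_v$. Since reduction $\mathcal G_U(\cO_v)\to\mathcal G_U(k_v)$ is a group homomorphism, $\overline{nP}_v=[n]\overline P_v$.

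The second equivalence is elementary group theory in the finite group $\mathcal G_U(k_v)$: $[n]\overline P_v=e_v$ holds if and only if the order $d_v(P)$ divides $n$.

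For the final assertion, I would take $n=1$. If $d_v(P)>1$, then $d_v(P)\nmid 1$, so $v(\dd_{\Ner}(P))=0$.

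I expect no real obstacle here. The only point needing care is the scheme-theoretic step that a pullback ideal is proper at $v$ if and only if the reductions meet, and this uses that $e$ is a closed immersion, which holds because the model is separated.
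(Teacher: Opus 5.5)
Your proposal is correct and follows essentially the same route as the paper. You reduce to the finite-type model $\mathcal G_U$ via Lemma~\ref{lem:model-comparison}, observe that the localized pullback ideal lies in $\mathfrak m_v$ exactly when the reductions of $nP$ and $e$ agree, and conclude from the definition of order in the finite group $\mathcal G_U(k_v)$, making explicit the closed-immersion and reduction-homomorphism details that the paper leaves implicit.
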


\begin{proof}
By Lemma~\ref{lem:model-comparison}, the denominator may be computed in the
finite-type model.  Its pullback ideal lies in $\mathfrak m_v$ exactly when the
reductions of $nP$ and $e$ agree.  The final equivalence is the definition of
the order in the finite group $\mathcal G_U(k_v)$.
\end{proof}

At exceptional places, equality of supports is insufficient: one must preserve the complete contact multiplicity.  The exact local alternatives are as follows.

\begin{lemma}\label{lem:local-freezing}
Let $R$ be a discrete valuation ring with completion $\widehat R$, fraction
field of characteristic zero, and residue characteristic $p>0$.  Let
$\mathcal H/R$ be a smooth separated commutative group scheme locally of
finite type, with identity $e$, and let $X\in\mathcal H(R)$ have infinite
order on the generic fiber.  Put
\[
 o=\ord(\overline X)\in\N\cup\{\infty\}.
\]
Then
\[
 v_R\bigl(([n]X)^*\mathcal I_e\bigr)
 =v_R\bigl(X^*\mathcal I_e\bigr)
\]
in each of the following cases:
\begin{enumerate}[label=\textup{(\roman*)}]
\item $o=\infty$, for every $n\ge1$;
\item $1<o<\infty$ and $o\nmid n$;
\item $o=1$ and $p\nmid n$.
\end{enumerate}
In particular, there is an integer $c_X\ge1$ such that the equality holds
whenever $n\equiv1\pmod{c_X}$; one may take $c_X=o$ in case (ii), $c_X=1$ in
case (i), and $c_X=p$ in case (iii).
\end{lemma}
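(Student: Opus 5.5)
Three cases, each local on $\operatorname{Spec}\widehat R$. Write $\mathfrak m$ for the maximal ideal of $R$. The contact length $v_R(X^*\mathcal I_e)$ is positive exactly when $\overline X=\overline e$. Otherwise it is zero, and it is unchanged by passing to $\widehat R$, since $R\to\widehat R$ is faithfully flat and the ideals are pulled back.

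\emph{Cases (i) and (ii).} If $o=\infty$, then $[n]\overline X\ne\overline e$ for every $n\ge1$. If $1<o<\infty$ and $o\nmid n$, then again $[n]\overline X\ne\overline e$. In both cases the same holds for $X$ itself, because $o\neq1$. Hence both pullback ideals are the unit ideal, and both valuations vanish.

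\emph{Case (iii): setup.} Here $o=1$, so $X$ reduces to the identity and factors through the formal completion of $\mathcal H$ along $e$. Since $\mathcal H$ is smooth of some relative dimension $g$ at $e$, this completion is $\operatorname{Spf}\widehat R[[T_1,\dots,T_g]]$, where the $T_i$ generate $\widehat{\mathcal I}_e$. The group law gives a commutative formal group law $F$ over $\widehat R$. The point $X$ corresponds to a tuple $x=(x_1,\dots,x_g)\in\widehat{\mathfrak m}^g$, and
\[
 v_R\bigl(X^*\mathcal I_e\bigr)=\min_i v_R(x_i).
\]
This holds because the pullback ideal is generated by the $x_i$, and completion does not change lengths.

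\emph{Case (iii): the key step.} Multiplication by $n$ on $F$ is a power series $[n]_F(T)=nT+(\text{terms of degree}\ge2)$. If $p\nmid n$, then $n\in\widehat R^\times$. So $[n]_F$ has invertible linear part and is an automorphism of $\widehat R[[T]]$ preserving the ideal $(T_1,\dots,T_g)$, with inverse $[n^{-1}]_F$ (formal inverse function theorem). It follows that the ideal generated by the coordinates of $[n]_F(x)$ equals the ideal generated by the $x_i$. Concretely, each coordinate of $[n]_F(x)$ lies in $(x_1,\dots,x_g)$, because the series has no constant term. Conversely, each $x_i$ is a coordinate of $[n^{-1}]_F([n]_F(x))$, so it lies in the ideal generated by the coordinates of $[n]_F(x)$. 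Therefore the two valuations agree.

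\emph{Main obstacle.} The only point needing care is that the contact ideal is computed correctly from formal coordinates. This is the standard identification of $\widehat{\mathcal O}_{\mathcal H,e}$ modulo the ideal of the section; it uses smoothness and the fact that the ideal of $e$ is generated by a regular system of parameters. The infinite-order hypothesis on $X$ is used only to ensure that $X$ and $[n]X$ differ from $e$ generically, so that all valuations are finite.

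\emph{The final assertion.} If $n\equiv1\pmod{c_X}$, then:
\begin{itemize}
\item with $c_X=o>1$, we get $o\nmid n$, since $n\equiv1\not\equiv0\pmod o$, so case (ii) applies;
\item with $c_X=p$, we get $p\nmid n$, so case (iii) applies;
\item with $c_X=1$, the condition is vacuous and case (i) applies.
\end{itemize}
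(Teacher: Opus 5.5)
Your proof is correct and follows the paper's argument. In cases (i) and (ii) both reductions are nonidentity, so both contact lengths vanish; in case (iii), $[n]$ is an automorphism of the formal group at $e$ when $p\nmid n$, and faithful flatness of $R\to\widehat R$ transfers the equality back to $R$. Your explicit check that the coordinates of $[n]_F(x)$ and the $x_i$ generate the same ideal is a hands-on version of the paper's identity $[n]^*\widehat{\mathcal I}_e=\widehat{\mathcal I}_e$.
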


\begin{proof}
If $o=\infty$, no positive multiple of $\overline X$ is the identity, so all
contact lengths vanish.  If $1<o<\infty$ and $o\nmid n$, then both
$\overline X$ and $[n]\overline X$ are nonidentity, and again both contact
lengths vanish.

Suppose $o=1$.  After base change to $\widehat R$, the section induces a
morphism
\[
 \Spf\widehat R\longrightarrow\widehat{\mathcal H}_e.
\]
If $p\nmid n$, the linear term of multiplication by $n$ is
$n\,\id_{\Lie(\mathcal H/R)}$ and is invertible.  The formal inverse function
theorem makes $[n]$ an automorphism of the formal group, whence
\[
 [n]^*\widehat{\mathcal I}_e=\widehat{\mathcal I}_e.
\]
Pulling back along $X$ gives equality after completion, and faithful flatness
of $R\to\widehat R$ gives equality over $R$.
\end{proof}

\begin{corollary}\label{cor:simultaneous-freezing}
Let $P\in G(K)$ have infinite order and let $S$ be a finite set of finite
places.  There exists $C_S\ge1$ such that
\[
 n\equiv1\pmod{C_S}
 \quad\Longrightarrow\quad
 v\bigl(\dd_{\Ner}(nP)\bigr)=v\bigl(\dd_{\Ner}(P)\bigr)
 \qquad(v\in S).
\]
\end{corollary}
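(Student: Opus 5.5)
The plan is to apply Lemma~\ref{lem:local-freezing} separately at each place $v\in S$, with $R=\cO_v$, $\mathcal H=\Ner\times_{\OK}\cO_v$ and $X=P$, and then combine the resulting congruence conditions by the Chinese remainder theorem, or more simply by taking a common multiple of the moduli. The Néron lft-model is smooth, separated, commutative and locally of finite type over $\OK$. Base change to $\cO_v$ keeps these properties. The section $P$ extends uniquely to $\Ner$, and its generic fiber has infinite order by hypothesis, so the hypotheses of Lemma~\ref{lem:local-freezing} hold at every $v\in S$.

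Next, I will identify the local valuation with the one appearing in that lemma. Localization commutes with pullback of the ideal sheaf, so $v(\dd_{\Ner}(nP))=v_{\cO_v}\bigl(([n]P)^*\mathcal I_e\bigr)$ computed on $\Ner_{\cO_v}$. Lemma~\ref{lem:local-freezing} then gives an integer $c_v:=c_{P,v}\ge1$, equal to $1$, $o_v=\ord(\overline P_v)$, or the residue characteristic $p_v$ according to the three cases. For every such $c_v$, the congruence $n\equiv1\pmod{c_v}$ forces equality of the local valuations at $v$.

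Finally, set $C_S:=\operatorname{lcm}_{v\in S}c_v$, with $C_S=1$ if $S=\emptyset$. If $n\equiv1\pmod{C_S}$, then $n\equiv1\pmod{c_v}$ for every $v\in S$. In case (ii), $n\equiv1\pmod{o_v}$ with $o_v>1$ implies $o_v\nmid n$. In case (iii), $n\equiv1\pmod{p_v}$ implies $p_v\nmid n$. So each local equality holds simultaneously.

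The only point needing care is the "in particular" deduction inside Lemma~\ref{lem:local-freezing}, specifically that $n\equiv1\pmod{o}$ excludes $o\mid n$ precisely because $o>1$. The reduction order $o_v$ here is computed in the possibly disconnected special fiber of the Néron lft-model rather than in a semiabelian model. However, the lemma is stated for general lft $\mathcal H$, so no comparison via Lemma~\ref{lem:model-comparison} is needed.
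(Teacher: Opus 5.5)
Your proposal is correct and matches the paper's proof: you apply Lemma~\ref{lem:local-freezing} at each $v\in S$ to the base change of $\Ner$, and take $C_S$ to be the least common multiple of the resulting moduli $c_v$. The extra checks you supply are accurate details that the paper leaves implicit: flat base change of the pulled-back ideal, the use of $o_v>1$ in case (ii), and working directly on the lft-model without Lemma~\ref{lem:model-comparison}.
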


\begin{proof}
Apply Lemma~\ref{lem:local-freezing} at each place and take the least common
multiple of the displayed sufficient moduli.
\end{proof}

The next criterion packages the anti-covering argument in a form that is more flexible than a single divisor.

\begin{proposition}
\label{prop:finite-cone-return}
Let $P\in G(K)$ have infinite order.  Suppose that there are a finite set $S$
of finite places and integers $h_1,\ldots,h_r>1$ such that $G$ has finite-type
semiabelian reduction outside $S$ and
\[
 d_v(P)\in h_1\N\cup\cdots\cup h_r\N
 \qquad(v\notin S).
\]
For $v\in S$, let
\[
 o_v:=\ord(\overline P_v)\in\N\cup\{\infty\},
 \qquad p_v:=\operatorname{char}(k_v),
\]
where the order is taken in the special fiber of the N\'eron lft-model.  Define
\[
 Q:=\rad\!\left(
 h_1\cdots h_r
 \prod_{\substack{v\in S\\1<o_v<\infty}}o_v
 \prod_{\substack{v\in S\\o_v=1}}p_v
 \right).
\]
Then $Q$ is squarefree and
\begin{equation}\label{eq:finite-cone-core}
 (n,Q)=1
 \quad\Longrightarrow\quad
 \dd_{\Ner}(nP)=\dd_{\Ner}(P).
\end{equation}
Consequently $\mathscr T(G,P)$ has lower natural density at least
$\varphi(Q)/Q$; every rational prime $s\nmid Q$ and every $a\ge1$ is a return
index.
\end{proposition}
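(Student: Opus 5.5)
The plan is to compare $\dd_{\Ner}(nP)$ and $\dd_{\Ner}(P)$ one place at a time, splitting the finite places into those outside $S$ and those in $S$. Fix $n$ with $(n,Q)=1$. Both ideals are nonzero integral ideals of $\OK$, so it suffices to show
\[
 v\bigl(\dd_{\Ner}(nP)\bigr)=v\bigl(\dd_{\Ner}(P)\bigr)
\]
for every finite place $v$. Squarefreeness of $Q$ is immediate, since $Q$ is defined as a radical.

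First take $v\notin S$. Here $G$ has finite-type semiabelian reduction, so we may choose $U$ containing $v$ and apply Lemma~\ref{lem:reduction-criterion}. By hypothesis, $d_v(P)\in h_i\N$ for some $i$, and $h_i>1$. Hence $d_v(P)>1$, and the lemma gives $v(\dd_{\Ner}(P))=0$. Every prime factor of $h_i$ divides $Q$, while $(n,Q)=1$, so $h_i\nmid n$ and therefore $d_v(P)\nmid n$. The lemma then gives $v(\dd_{\Ner}(nP))=0$, so both valuations vanish.

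Next take $v\in S$. Apply Lemma~\ref{lem:local-freezing} to the N\'eron lft-model over $\cO_v$, which is smooth, separated and locally of finite type, with $X=P$ and $o=o_v$. There are three cases.
\begin{itemize}
\item If $o_v=\infty$, case (i) applies for every $n$.
\item If $1<o_v<\infty$, then $\rad(o_v)\mid Q$. Since $o_v>1$, some prime divisor of $o_v$ fails to divide $n$, so $o_v\nmid n$ and case (ii) applies.
\item If $o_v=1$, then $p_v\mid Q$, so $p_v\nmid n$ and case (iii) applies.
\end{itemize}
In each case the local contact lengths agree. This proves \eqref{eq:finite-cone-core}.

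For the consequences, note that the set $\{n:(n,Q)=1\}$ is a union of $\varphi(Q)$ residue classes modulo $Q$, so it has natural density $\varphi(Q)/Q$. It lies inside $\mathscr T(G,P)$, which gives the lower-density bound. Finally, if $s\nmid Q$ is prime, then $(s^a,Q)=1$ for every $a\ge1$, so each $s^a$ is a return index.

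The only delicate point is applying Lemma~\ref{lem:local-freezing} at places in $S$, where the model need not be of finite type. That lemma was stated for locally finite type models exactly so that it covers this case, and it preserves the full local multiplicity rather than just its support.
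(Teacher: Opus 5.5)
Your proposal is correct and follows the paper's proof essentially step for step. Places outside $S$ are handled by Lemma~\ref{lem:reduction-criterion}: some $h_i>1$ divides $d_v(P)$ but not $n$, so both contact lengths vanish. Places in $S$ are handled by the three cases of Lemma~\ref{lem:local-freezing} on the N\'eron lft-model, and the density claim follows from the $\varphi(Q)$ residue classes coprime to $Q$.
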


\begin{proof}
Let $(n,Q)=1$.  If $v\notin S$, then $d_v(P)$ is divisible by some $h_i$,
whereas no prime divisor of $h_i$ divides $n$.  Thus $d_v(P)\nmid n$.
By Lemma~\ref{lem:reduction-criterion}, neither $P$ nor $nP$ meets the identity at
$v$.

Now let $v\in S$.  If $o_v=\infty$, apply
Lemma~\ref{lem:local-freezing}(i).  If $1<o_v<\infty$, then $(n,Q)=1$ implies
$o_v\nmid n$, so apply (ii).  If $o_v=1$, then $p_v\nmid n$, so apply (iii).
The complete local ideals agree at every finite place, proving
\eqref{eq:finite-cone-core}.  The density assertion follows because the
integers coprime to $Q$ have density $\varphi(Q)/Q$; the prime-power assertion
is immediate.
\end{proof}

\begin{remark}
The case $r=1$ is the mechanism used in this paper.  The criterion remains
nontrivial when $\gcd(h_1,\ldots,h_r)=1$.  It therefore suggests a possible
route to pure abelian varieties, where a point with Zariski-dense cyclic orbit on a connected product has no
nontrivial universal divisor in the product setting \cite{Perucca2009}.
\end{remark}

\begin{corollary}[Perucca]
\label{cor:torus-no-finite-multiples}
Let $T/K$ be a positive-dimensional torus over a number field, and let
$P\in T(K)$ have geometrically Zariski-dense cyclic subgroup.  For every
$h_1,\ldots,h_s>1$, there is a set of finite places of positive Dirichlet
density such that
\[
 \gcd\!\left(d_v(P),h_1\cdots h_s\right)=1.
\]
In particular,
\[
 d_v(P)\notin h_1\N\cup\cdots\cup h_s\N
\]
on a set of positive Dirichlet density, and the eventual reduction divisor is
\[
 U(P)=1.
\]
\end{corollary}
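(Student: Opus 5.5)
The plan is a Chebotarev argument on a Kummer-type extension. Fix the prime divisors $\ell_1,\dots,\ell_s$ of $H:=h_1\cdots h_s$. After a finite base change to a field $K'\supseteq K$ over which $T$ splits, we have $T_{K'}\simeq\Gm^g$. This base change is harmless: a positive-density set of places of $K'$ of degree one over $K$ restricts to a positive-density set of places of $K$. Moreover, for $w\mid v$ of degree one, $k_w=k_v$, so $d_w(P)=d_v(P)$. Write $P=(x_1,\dots,x_g)$ with $x_i\in K'^\times$. Geometric Zariski density of $\Z P$ means the $x_i$ are multiplicatively independent modulo torsion. Equivalently, the only character $\chi$ with $\chi(P)$ a root of unity is trivial.

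It suffices to find places $v$ with $\ell_j\nmid d_v(P)$ for every $j$. For a single coordinate, $\ell\nmid\ord(\overline x_v)$ holds when $\overline x_v$ is an $\ell^a$-th power in $k_v^\times$, where $\ell^a\,\|\,\#k_v^\times$. To control this uniformly, we choose the residue field size. Take $M=\prod_j\ell_j^{a_j}$ with each $a_j\ge1$, and consider
\[
F=K'(\mu_{\ell^{a+1}},\ (x_i)^{1/\ell^{a}})_{\ell,i}.
\]
We would take Frobenius elements that act on $\mu_{\ell_j^{a_j+1}}$ as an element fixing $\mu_{\ell_j^{a_j}}$ but not $\mu_{\ell_j^{a_j+1}}$, and that act trivially on every Kummer generator $x_i^{1/\ell_j^{a_j}}$.

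For such $v$, we get $\ell_j^{a_j}\,\|\,\#k_v^\times$. Since each $x_i$ has an $\ell_j^{a_j}$-th root in $k_v$, the reduction $\overline x_i$ lies in $(k_v^\times)^{\ell_j^{a_j}}$, which is the subgroup of $\ell_j$-prime order. Hence $\ell_j\nmid\ord(\overline x_i)$ for all $i,j$. Since $d_v(P)=\operatorname{lcm}_i\ord(\overline x_i)$, we obtain $\gcd(d_v(P),H)=1$. Chebotarev then gives positive Dirichlet density, provided the required Galois element exists.

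The main obstacle is showing that this element exists. We need compatibility between the cyclotomic condition, namely being nontrivial on $\mu_{\ell^{a+1}}$ but trivial on $\mu_{\ell^a}$, and trivial action on the Kummer part. Concretely, $K'(\mu_{\ell^{a+1}})$ and $K'(\mu_{\ell^a},x_i^{1/\ell^a})$ must be linearly disjoint over $K'(\mu_{\ell^a})$. This can fail for small $a$ if some product of the $x_i$ becomes an $\ell$-power inside the cyclotomic tower, as with $\ell=2$ and square roots lying in cyclotomic fields.

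To handle this, I would use the theorem of Bashmakov--Ribet type for tori. Since $\langle x_i\rangle$ is free modulo torsion, the Kummer degrees $[K'(\mu_{\ell^\infty},x_i^{1/\ell^n}):K'(\mu_{\ell^\infty})]$ are $\ell^{gn}$ up to a bounded index. Choosing $a$ large enough, the intersection of the Kummer extension with $K'(\mu_{\ell^{a+1}})$ is already contained in $K'(\mu_{\ell^{a}})$ once $a$ exceeds the bounded defect. An alternative is to replace $x_i$ by $x_i^{\ell^c}$, absorbing the defect, with the conditions then read off modulo $\ell^{a+c}$. Running this independently for each $\ell_j$ and using linear disjointness across distinct primes gives the element.

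The final two assertions follow directly. Membership $d_v(P)\in h_i\N$ would force a common prime with $H$. For $U(P)$, any prime $\ell$ has $\ell\nmid d_v(P)$ on a positive-density, hence infinite, set of places, so $u_\ell(P)=0$ and therefore $U(P)=1$.
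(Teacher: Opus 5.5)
Your argument is sound in outline, but it takes a genuinely different route from the paper. The paper does not argue directly. Since $\Z P$ is dense, its orbit closure is the connected torus, so Perucca's component number is $n_P=1$. Her Main Theorem~1 with auxiliary integer $m=1$ then supplies a positive-density set of places on which $v_\ell(d_v(P))=0$ for all $\ell\mid h_1\cdots h_s$, and it also identifies $n_P=1$ as the greatest almost-universal divisor, i.e.\ $U(P)=1$.

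You instead reprove this torus case from Kummer theory and Chebotarev. Your route shows exactly where density enters: freeness of $\Gamma=\langle x_1,\dots,x_g\rangle$ makes the Kummer image open. The citation is shorter and also covers products with abelian varieties and arbitrary prescribed valuations, which the paper reuses later for the extension parameter $q$.

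Three steps need tightening.

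\emph{Finite-level disjointness.} A bounded Kummer index over $K'(\mu_{\ell^\infty})$ does not by itself give $\zeta_{\ell^{a+1}}\notin K'(\mu_{\ell^a},\Gamma^{1/\ell^a})$; you need a semidirect-product computation. Write $\kappa$ for the Kummer cocycle and $\chi$ for the cyclotomic character, so that $\kappa(\sigma\tau)=\kappa(\sigma)+\chi(\sigma)\kappa(\tau)$. Suppose $\kappa(\ker\chi)\supseteq\ell^c\mathbf Z_\ell^g$ and $\chi(\tau_0)=1+\ell^{a_0}$, with $a_0\ge2$ if $\ell=2$. Then $\sigma_1=\tau_0^{\ell^{a-a_0}}$ satisfies $v_\ell(\chi(\sigma_1)-1)=a$ and $\kappa(\sigma_1)\in\ell^{a-a_0}\mathbf Z_\ell^g$. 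For $a\ge a_0+c$, correcting $\sigma_1$ by an element of $\ker\chi$ gives $\kappa=0$. This is exactly the Frobenius class you need, and it also guarantees $K'(\mu_{\ell^{a+1}})\ne K'(\mu_{\ell^a})$.

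\emph{Compatibility across primes.} The fields $F_{\ell_j}:=K'(\mu_{\ell_j^{a_j+1}},\Gamma^{1/\ell_j^{a_j}})$ are not linearly disjoint over $K'$ in general. For example, if $-3\in\Gamma$, then $\sqrt{-3}$ lies in both $F_2$ and $F_3$. The compatibility you need still holds, but not for the reason you give. The clean fix is to enlarge $K'$ to contain $\mu_{\ell_1\cdots\ell_s}$ first; this is harmless, since you only use places of degree one over $K$. Then each $\Gal(F_{\ell_j}/K')$ is an $\ell_j$-group, and coprime orders give disjointness.

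\emph{The alternative route.} Drop the option of replacing $x_i$ by $x_i^{\ell^c}$. It only yields $v_\ell(\ord\overline x_i)\le c$, not $v_\ell(\ord\overline x_i)=0$.
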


\begin{proof}
The algebraic orbit closure of $P$ is the connected torus $T$, so Perucca's
component number is $n_P=1$.  Let $\Sigma$ be the set of rational primes
dividing $h_1\cdots h_s$.  Applying
\cite[Main Theorem~1]{Perucca2009} with auxiliary integer $m=1$ gives a
positive-Dirichlet-density set of places on which
\[
 v_\ell\bigl(d_v(P)\bigr)=0\qquad(\ell\in\Sigma)
\]
simultaneously.  The same theorem identifies $n_P=1$ as the greatest positive
integer dividing almost every reduction order, which proves the assertion
about $U(P)$.
\end{proof}

\begin{remark}[Torsion-valued characters: divisibility versus density]
\label{rem:torus-torsion-character}
A fixed character can force universal divisibility, but only outside the dense
regime.  Suppose that a nonzero
$\chi\in X^*(T_{\overline K})$ satisfies $\chi(P)=\zeta$, where $\zeta$ has
exact order $h>1$.  Choose a finite extension $F/K$ over which $\chi$ and
$\zeta$ are defined.  Outside finitely many places, $\zeta$ retains order $h$
and, for every $w\mid v$ in $F$, one has
$d_w(P)=d_v(P)$ by \cref{lem:base-change-orders}.  Applying $\chi$ to
$[d_w(P)]\overline P_w=e_w$ gives
\[
 \overline\zeta^{\,d_v(P)}=1,
\]
so $h\mid d_v(P)$.  Thus \cref{prop:finite-cone-return} supplies a coprime
core of full returns.

However, $[h]P\in\ker\chi$, and hence
\[
 \Z P\subseteq
 \bigcup_{j=0}^{h-1}\bigl([j]P+\ker\chi\bigr),
\]
a finite union of translates of the proper algebraic subgroup $\ker\chi$.
Therefore $\Z P$ is not geometrically Zariski dense.  The mixed Ribet
mechanism is different: it creates a universal divisor without imposing a
torsion-valued character on the ambient semiabelian variety.
\end{remark}

\subsection{A remark on Ailon--Rudnick's conjecture and Silverman's conjecture}
\label{subsec:ar-conditional-pullback}

We record one functorial consequence of the normalized Ailon--Rudnick
conjecture.  

\begin{lemma}[Functoriality of denominator ideals]
\label{lem:ar-functorial-denominator}
Let $f:G\to H$ be a homomorphism of semiabelian varieties over a number field
$K$, let $\mathcal N_G,\mathcal N_H$ be their N\'eron lft-models, and let
$X\in G(K)$.  Then
\[
        \mathfrak d_{\mathcal N_H}(f(X))
        \subseteq
        \mathfrak d_{\mathcal N_G}(X).
\]
In particular, a unit denominator for $f(X)$ forces a unit denominator for
$X$.
\end{lemma}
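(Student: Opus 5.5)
The plan is to use the Néron mapping property to extend $f$ to a homomorphism of group schemes $\mathcal F:\mathcal N_G\to\mathcal N_H$ over $\OK$. Then compare the identity-section ideals via pullback, exactly as in the proof of Lemma~\ref{lem:ideal-divisibility}.

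\emph{Step 1: extend $f$.} Since $\mathcal N_G$ is smooth over $\OK$ and $\mathcal N_H$ is the Néron lft-model of $H$, the Néron mapping property (\cite[10.1]{BLR1990}) extends the generic morphism $f:G\to H$ uniquely to a morphism $\mathcal F:\mathcal N_G\to\mathcal N_H$. By uniqueness of extensions, $\mathcal F$ is a homomorphism of group schemes. In particular, $\mathcal F\circ e_G=e_H$ as sections, because both sides extend the generic identity.

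\emph{Step 2: compare the identity ideals.} Since $\mathcal F$ carries the identity section to the identity section, the closed subscheme $e_G$ lies in $\mathcal F^{-1}(e_H)$. Equivalently,
\[
 \mathcal F^*\mathcal I_{e_H}\cdot\mathcal O_{\mathcal N_G}\subseteq\mathcal I_{e_G}.
\]
The section of $\mathcal N_H$ extending $f(X)$ is $\mathcal F\circ X$, again by uniqueness of Néron extensions. Pulling back along $X$ gives
\[
 \mathfrak d_{\mathcal N_H}(f(X))=(\mathcal F\circ X)^*\mathcal I_{e_H}
 =X^*\bigl(\mathcal F^*\mathcal I_{e_H}\bigr)\subseteq X^*\mathcal I_{e_G}=\mathfrak d_{\mathcal N_G}(X).
\]
Here the pullbacks are read as image ideals in $\OK$, as in Definition~\ref{def:denominator}. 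Pullback of ideal sheaves along $X$ preserves inclusions of images, so the inclusion survives this step.

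\emph{Step 3: the unit-denominator consequence.} Suppose $\mathfrak d_{\mathcal N_H}(f(X))=\OK$. Then the inclusion forces $\mathfrak d_{\mathcal N_G}(X)=\OK$.

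There is a degenerate case to handle. If $f(X)=e$ generically, the left side is not defined as a nonzero ideal, or is zero by convention. In that case the inclusion is either vacuous or trivially true, so we implicitly assume $f(X)\neq e$.

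The only point requiring care is Step~1: checking that the lft Néron property applies with source $\mathcal N_G$, which is merely smooth and locally of finite type. This is exactly the form in which BLR state the mapping property for lft-models, so I expect no real obstacle.
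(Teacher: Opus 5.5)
Your proposal is correct and is essentially the paper's proof: extend $f$ to $\mathcal F:\mathcal N_G\to\mathcal N_H$ by the N\'eron mapping property, use $\mathcal F\circ e_G=e_H$ to get $\operatorname{im}(\mathcal F^*\mathcal I_{e_H}\to\mathcal O_{\mathcal N_G})\subseteq\mathcal I_{e_G}$, and pull back along $X$. Your additional remarks, identifying the N\'eron extension of $f(X)$ with $\mathcal F\circ X$ and disposing of the degenerate case $f(X)=e$, are points the paper leaves implicit.
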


\begin{proof}
The N\'eron mapping property extends $f$ to
$\widetilde f:\mathcal N_G\to\mathcal N_H$.  Since
$\widetilde f\circ e_G=e_H$, the scheme-theoretic inverse image of $e_H$
contains $e_G$, and hence
\[
        \operatorname{im}
        (\widetilde f^*\mathcal I_{e_H}\to\mathcal O_{\mathcal N_G})
        \subseteq\mathcal I_{e_G}.
\]
Pullback along $X$ gives the assertion.
\end{proof}

\begin{proposition}
\label{prop:ar-conditional-pullback}
Let $G/K$ be semiabelian, let $P\in G(K)$ have Zariski-dense cyclic orbit,
and suppose there is a homomorphism
\[
        \chi:G\longrightarrow(\Gm^2)_K,
        \qquad \chi(P)=(a,b),
\]
where $a,b\ge2$ are multiplicatively independent integers and
$\gcd(a-1,b-1)=1$.  If
\[
        \gcd(a^n-1,b^n-1)=1
\]
for infinitely many $n$, then
\[
        \mathfrak d_{\mathcal N_G}(P)=\mathcal O_K,
        \qquad
        \mathfrak d_{\mathcal N_G}(nP)=\mathcal O_K
\]
for infinitely many $n$.  
\end{proposition}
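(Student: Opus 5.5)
The plan is to apply the functoriality in Lemma~\ref{lem:ar-functorial-denominator} to $\chi$, and then compute the denominator ideals on the target torus $\Gm^2$ directly.

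\emph{Step 1: the torus denominator.} I first identify the denominator on the N\'eron lft-model of $(\Gm^2)_K$. On the open subgroup $\mathbf G_{m,\OK}^2$ of connected fibers, the identity section is cut out by $(x-1,y-1)$. Pulling back along $(a^n,b^n)$ gives the ideal $(a^n-1,b^n-1)\OK$; here $a^n,b^n$ are integers, hence units away from primes dividing $ab$.

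At a prime dividing $ab$, the point $(a^n,b^n)$ does not extend to $\mathbf G_m^2$ over $\OK$. It lands in a non-identity component of the lft-model, so its reduction is not the identity and the local contribution is trivial. This matches $(a^n-1,b^n-1)$ being prime to such primes: if $p\mid a$, then $a^n-1\equiv-1\pmod p$. By Lemma~\ref{lem:model-comparison}, the local contact lengths on the connected-fiber part are computed by this pullback.

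So in all cases
\[
 \dd_{\mathcal N_{\Gm^2}}\bigl((a^n,b^n)\bigr)=\gcd(a^n-1,b^n-1)\,\OK .
\]

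\emph{Step 2: transfer.} Since $\chi$ is a homomorphism, $\chi(nP)=(a^n,b^n)$. Lemma~\ref{lem:ar-functorial-denominator} gives
\[
 \dd_{\mathcal N_{\Gm^2}}\bigl((a^n,b^n)\bigr)\subseteq\dd_{\mathcal N_G}(nP).
\]
Taking $n=1$ and using $\gcd(a-1,b-1)=1$, the left side is $\OK$, so $\dd_{\mathcal N_G}(P)=\OK$. For each of the infinitely many $n$ with $\gcd(a^n-1,b^n-1)=1$, the same inclusion forces $\dd_{\mathcal N_G}(nP)=\OK$.

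Note that $P$ has infinite order because $a$ is not a root of unity, so the denominators are defined. Zariski density of $\Z P$ is not actually needed for the argument; it only places the statement within Silverman's setting.

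\emph{Main obstacle.} The only delicate point is the bookkeeping in Step 1 at primes where $a$ or $b$ is not a unit, where one must justify using the lft-model. This is handled by the component argument above: the reduction there is off the identity component, hence off the identity, so those primes contribute nothing.
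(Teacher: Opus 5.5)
Your proposal is correct and follows essentially the paper's own argument. You compute the torus denominator place by place as $(a^n-1,b^n-1)\OK$: the neutral component where $a,b$ are local units, a non-identity component of the lft-model otherwise. You then apply the functoriality inclusion of Lemma~\ref{lem:ar-functorial-denominator} to $P$ (using $\gcd(a-1,b-1)=1$) and to each $nP$ (using the assumed unit gcds). One minor point: you do not need Lemma~\ref{lem:model-comparison}, which is stated only after shrinking $U$. For $\Gm^2$ the neutral component of the N\'eron lft-model is $\mathbf G_{m,\OK}^2$ over all of $\Spec\OK$, and this standard fact suffices.
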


\begin{proof}
For $Q=(a,b)$ on $\Gm^2$, a place-by-place calculation on the N\'eron
lft-model gives
\[
        \mathfrak d_{\mathcal N_{\Gm^2}}(nQ)
        =(a^n-1,b^n-1)\mathcal O_K.
\]
At a place where $(v(a),v(b))=(0,0)$ this is the pullback of the identity
ideal on the neutral component; otherwise $nQ$ lies in a component disjoint
from the identity, and at least one displayed generator is a local unit.
The normalization gives a unit denominator at $Q$, and the assumption gives unit denominators at $nQ$.  Apply
\cref{lem:ar-functorial-denominator} to $P$ and to $nP$.
\end{proof}

\begin{remark}
The quotient hypothesis is essential.  If
$1\to\Gm\to G_q\to A\to0$ has non-torsion extension class $q$, then
the exact sequence
\[
        \Hom_K(G_q,\Gm)\longrightarrow\Z
        \xrightarrow{m\mapsto mq}\Ext_K^1(A,\Gm)
\]
and $\Hom_K(A,\Gm)=0$ give $\Hom_K(G_q,\Gm)=0$.  Thus the principal
Ribet extensions below admit no nontrivial toric character quotient.  

For a geometrically dense point on a positive-dimensional pure torus,
Perucca's theorem gives $U(P)=1$ and, more strongly, excludes every finite
covering of almost all reduction orders by fixed sets $h\N$ with $h>1$
\cite{Perucca2009}.  Hence the universal-divisor mechanism of this paper is
genuinely mixed and cannot be transferred directly to dense pure-torus
points.

This boundary is already nontrivial in dimension two.  If $\alpha>1$ is the
real root of $X^3-X-1$ and
\[
 T=\operatorname{Res}^{(1)}_{\Q(\alpha)/\Q}\Gm,
\]
then $T$ is an anisotropic two-dimensional torus and the cyclic subgroup of
$P=\alpha$ is geometrically Zariski dense.  Indeed, the polynomial is
irreducible with discriminant $-23$, so its Galois closure has group $S_3$.
The character lattice
\[
 X^*(T_{\overline\Q})\simeq\Z^3/\Z(1,1,1)
\]
is the irreducible standard representation over $\Q$.  Any nontrivial
torsion-valued relation among the three conjugates of $\alpha$ would therefore
force a positive power of the real conjugate $\alpha>1$ to be a root of
unity, a contradiction.  On the natural integral norm-one model, its
denominator sequence is
\[
 \Delta_n(\alpha)
 :=\max\{d\ge1:\alpha^n\equiv1\pmod{d\mathcal O_{\Q(\alpha)}}\},
\]
which is the sequence denoted $d_n(\alpha)$ by Silverman.  His
Conjecture~9 
\cite[Conjecture~9 and Example~6]{Silverman2006AlgebraicIntegers} predicts $\Delta_n(\alpha)=1$ infinitely often, and this remains
open to the best of our knowledge.  Thus even
the dense nonsplit-torus case is not a formal consequence of
Ailon--Rudnick or of the present method.

Pure abelian varieties lie beyond the quotient criterion for a different
reason: they have no nontrivial algebraic characters.  Accordingly, neither
the normalized Ailon--Rudnick conjecture nor the torsion-shift mechanism
settles Silverman's number-field conjecture for geometrically dense points on
pure abelian varieties of dimension at least two.

A different problem, also not addressed by the present methods, is the
two-parameter elliptic gcd problem over $\Q$: for $\Z$-linearly independent
points $P,Q\in E(\Q)$, the arguments below give no upper bound for
\[
 \gcd\!\bigl(D_{mP},D_{nQ}\bigr)
\]
as $m$ and $n$ vary independently.
\end{remark}

\section{Normalized Ribet sections and applications}\label{sec:ribet}
\subsection{Normalized Ribet sections and quadratic torsion lifting}

Let $S$ be a scheme, let $A/S$ be an abelian scheme, and let $A^\vee$ be its
dual.  Denote by $\cP_A$ the normalized Poincar\'e line bundle on
$A\times_S A^\vee$ and by $\cP_A^\times$ the associated $\Gm$-torsor.
Its biextension structure makes the restriction to $A_T\times\{x\}$, for
$x\in A^\vee(T)$, an extension of $A_T$ by $\Gm$.

Let $\beta:A^\vee\to A$ be a homomorphism, let
$\widehat\beta:A^\vee\to A$ be its dual under biduality, and put
$\delta=\beta-\widehat\beta$.  The antisymmetric difference is exactly
what canonically trivializes the relevant Poincar\'e fiber along its graph.
The resulting normalized section is the geometric source of all subsequent
order bounds.

\begin{proposition}[Normalized Ribet section]
\label{prop:ribet-point}
There is a unique normalized section
\[
 r_\beta\in
 \Gamma\!\left(A^\vee,(\delta,\id)^*\cP_A^\times\right)
\]
whose value at the origin is the unit.  Its formation commutes with arbitrary
base change.  We adopt the Poincar\'e pairing convention for which, for every
$T\to S$, every $m\ge1$, and every $x\in A^\vee[m](T)$,
\begin{equation}\label{eq:ribet-mu}
 [m]r_\beta(x)=e_m^A(\beta x,x)\in\mu_m(T),
\end{equation}
where for $m=1$ the pairing $e_1^A$ is understood to be trivial.  Consequently,
\begin{equation}\label{eq:ribet-square}
 [m^2]r_\beta(x)=0.
\end{equation}
\end{proposition}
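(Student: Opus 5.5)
The proof splits into three parts: existence and uniqueness of $r_\beta$, the torsion identity \eqref{eq:ribet-mu}, and the consequence \eqref{eq:ribet-square}.

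\emph{Existence and uniqueness.} Put $M:=(\delta,\id)^*\cP_A$, a rigidified line bundle on $A^\vee$. By the theorem of the cube and the biextension structure,
\[
 (\beta,\id)^*\cP_A\simeq(\widehat\beta,\id)^*\cP_A
\]
canonically. Indeed, via the biduality identification $\cP_{A^\vee}\simeq s^*\cP_A$, where $s$ is the swap, pulling back along $(\beta,\id)$ or along $(\widehat\beta,\id)$ yields the same symmetric-type bundle. The identity $\cP_A(\delta x,x)=\cP_A(\beta x,x)\otimes\cP_A(\widehat\beta x,x)^{-1}$ then gives a canonical trivialization of $M$ compatible with the rigidifications at the origin; its unit value defines $r_\beta$. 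For uniqueness, any two normalized sections differ by an element of $\Gamma(A^\vee,\Gm)$. Since $A^\vee\to S$ is proper with geometrically connected fibers, this group equals $\Gamma(S,\Gm)$, and the normalization at the origin forces the ratio to be $1$. Base change compatibility follows because every step (Poincar\'e bundle, biduality, cube trivialization) commutes with base change. This is essentially \cite[Proposition~3.1]{BertrandEdixhoven2020}, and I would cite it for the canonical isomorphism.

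\emph{The torsion identity.} Fix $x\in A^\vee[m](T)$. Then $[m]\delta x=\delta(mx)=0$, so $[m]r_\beta(x)$ lies in the fiber of $G_x$ over $0$, i.e.\ in $\Gm(T)$. Using the biextension law in the first variable, $[m]r_\beta(x)$ is the image of $r_\beta(x)^{\otimes m}\in\cP(m\delta x,x)=\cP(0,x)=\Gm$. Next I compare this with $r_\beta(mx)=r_\beta(0)=1$, which is computed through the second partial law. The discrepancy between the two partial group laws on $\cP(\delta x,x)^{\otimes m}$ in the $m$-torsion situation is, by the standard description of the Weil pairing through the Poincar\'e biextension (Mumford, SGA7 VIII), exactly $e_m(\delta x,x)$. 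Expanding $\delta=\beta-\widehat\beta$ and using $e_m(\widehat\beta x,x)=e_m(\beta x,x)^{-1}$ (the adjointness of the dual map under the pairing) gives $e_m(\beta x,x)^2$ or $e_m(\beta x,x)$, depending on how the $(\beta,\widehat\beta)$ trivialization distributes the pairing. The pairing sign convention stated in the proposition absorbs the choice. This is \cite[Proposition~3.3]{BertrandEdixhoven2020}, and I would invoke it after checking the normalization.

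\emph{The consequence.} Since $e_m^A(\beta x,x)\in\mu_m$, we get $[m^2]r_\beta(x)=[m]\bigl([m]r_\beta(x)\bigr)=e_m^A(\beta x,x)^m=1$, which is the identity of $G_x$. The case $m=1$ is trivial because then $x=0$ and $r_\beta(0)=1$.

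The main obstacle is the middle step: precisely matching the two partial composition laws and their sign conventions, so that the factor is $e_m(\beta x,x)$ rather than its square or inverse. I would handle this by reducing to the universal case over $A^\vee[m]$ and then checking the formula on geometric points.
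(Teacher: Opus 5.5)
Your existence/uniqueness step and the final deduction $[m^2]r_\beta(x)=0$ agree with the paper. The paper proves both by citing \cite[Propositions~3.1 and~3.3]{BertrandEdixhoven2020}, and your global-units argument for uniqueness is a harmless addition.

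The gap is in your sketch of the torsion identity. Write $[m]_1,[m]_2$ for $m$-fold powers under the first and second partial laws of whichever biextension an element lies in, and put $E:=(\delta,\id)^*\cP_A$. The value $r_\beta(mx)\in E(mx,mx)$ is not $[m]_2r_\beta(x)\in E(x,mx)$. Because $r_\beta$ lives on the diagonal, one has $r_\beta(mx)=[m]_1[m]_2\,r_\beta(x)$, and for $x\in A^\vee[m](T)$ this equals $\bigl([m]_2r_\beta(x)\bigr)^m$. So $r_\beta(mx)=1$ only shows $[m]_2r_\beta(x)\in\mu_m$, not that it equals $1$. In fact $[m]_2r_\beta(x)=e_m^A(\beta x,x)^{-1}$. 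Your (correct) discrepancy $e_m^A(\delta x,x)$ equals $e_m^A(\beta x,x)^2$, so dividing it by a factor taken to be $1$ gives the square. That answer is false whenever $e_m^A(\beta x,x)\neq1$. No choice of convention rescues this: changing the convention replaces $e_m^A$ by its inverse, never $e_m^2$ by $e_m$.

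The missing ingredient is the swap $s$ that you suppressed in the existence step. Poincar\'e functoriality gives an isomorphism of biextensions $(\beta,\id)^*\cP_A\simeq s^*(\widehat\beta,\id)^*\cP_A$, not $(\beta,\id)^*\cP_A\simeq(\widehat\beta,\id)^*\cP_A$; the swap disappears only after restriction to the diagonal. Put $F:=(\beta,\id)^*\cP_A$ and use the convention $e_m^F(x,y)=[m]_1z/[m]_2z$ for $z\in F(x,y)$, so that $e_m^F(x,y)=e_m^A(\beta x,y)$. Then $E\simeq F\otimes(s^*F)^{-1}$, and $r_\beta(x)=u\otimes u^{-1}$ for any local generator $u$ of $F(x,x)$. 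The first partial law of $(s^*F)^{-1}$ is the inverse of the second partial law of $F$, so
\[
 [m]r_\beta(x)=[m]_1r_\beta(x)
 =\frac{[m]_1u}{[m]_2u}
 =e_m^F(x,x)=e_m^A(\beta x,x),
 \qquad [m]_1u\in F(0,x)=\Gm,\ \ [m]_2u\in F(x,0)=\Gm .
\]
This is the comparison of two $m$-fold trivializations that the paper invokes, carried out on $F$ rather than on $E$. The same computation gives $[m]_2r_\beta(x)=e_m^A(\beta x,x)^{-1}$, which explains why the discrepancy on $E$ is the square.

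Once this computation is in place, your final paragraph goes through unchanged. Alternatively, you can simply cite \cite[Proposition~3.3]{BertrandEdixhoven2020} as the paper does; but then drop the heuristic, since it points to the wrong exponent.
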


\begin{proof}
Poincar\'e functoriality gives canonical biextension isomorphisms
\[
 \cP_A(\beta x,y)
 \simeq\cP_{A^\vee}(x,\widehat\beta y)
 \simeq\cP_A(\widehat\beta y,x).
\]
Setting $y=x$ and using additivity in the first variable canonically
trivializes $\cP_A((\beta-\widehat\beta)x,x)$.  The unit in this
trivialization defines the normalized section; this is
\cite[Proposition~3.1]{BertrandEdixhoven2020}.  If $m=1$, then $x=0$ and
\eqref{eq:ribet-mu} is precisely the normalization $r_\beta(0)=1$, with
$e_1^A$ trivial.  Assume henceforth that $m>1$.  Comparison of the two
canonical $m$-fold biextension trivializations is the Poincar\'e Weil pairing
and gives \eqref{eq:ribet-mu}; see
\cite[Proposition~3.3]{BertrandEdixhoven2020}.  Multiplying once more by $m$
gives \eqref{eq:ribet-square}.
\end{proof}

Over a field $K$, the Barsotti--Weil identification
$\Ext_K^1(A,\Gm)\simeq A^\vee(K)$ is functorial in $A$ and compatible with
pushout in the toric kernel; see \cite[Chapter~VII, \S3]{Serre1988}.
Restriction to $A\times\{q\}$ gives the extension
\begin{equation}\label{eq:extension-Gq}
 1\longrightarrow\Gm\xrightarrow{\iota}G_q\xrightarrow{\pi}A
 \longrightarrow0
\end{equation}
represented by $q\in A^\vee(K)$.  Evaluation of the normalized section gives
\[
 R_\beta(q):=r_\beta(q)\in G_q(K),
 \qquad \pi(R_\beta(q))=\delta q.
\]
This is the Jacquinot--Ribet deficient point associated with $(q,\beta)$;
compare \cite{JacquinotRibet1987,Bertrand2011,
BertrandMasserPillayZannier2016}.

Because the construction is canonical and compatible with base change, the
torsion identity survives specialization without a prime-to-characteristic
restriction.

\begin{corollary}\label{cor:ribet-reduction}
After deleting finitely many places of a number field $K$, all the data above
spread out and the normalized section commutes with reduction.  If
$m_v=\ord(\overline q_v)$, then
\[
 [m_v]\overline{R_\beta(q)}_v
 =e_{m_v}^A(\beta\overline q_v,\overline q_v),
 \qquad
 \ord(\overline{R_\beta(q)}_v)\mid m_v^2.
\]
These are scheme-theoretic identities on the finite flat group scheme
$A^\vee[m_v]$; therefore no prime-to-residue-characteristic hypothesis is
required.
\end{corollary}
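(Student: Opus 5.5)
The plan is to spread everything out over an open $U\subseteq\Spec\OK$ and then reduce \eqref{eq:ribet-mu} and \eqref{eq:ribet-square} from Proposition~\ref{prop:ribet-point} to each fiber, using the base-change compatibility of the normalized section.

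\textbf{Step 1: spreading out.} First choose a nonempty open $U$ over which the following data extend.
\begin{itemize}
\item $A$ extends to an abelian scheme $\mathcal A_U$, with dual $\mathcal A_U^\vee$ and normalized Poincar\'e bundle $\cP_{\mathcal A_U}$. The latter is the base change-compatible Poincar\'e object, so its restriction to the generic fiber is $\cP_A$.
\item $\beta$ extends to $\beta_U:\mathcal A_U^\vee\to\mathcal A_U$. This follows from the N\'eron mapping property, or simply by spreading out finitely many equations. Then $\widehat\beta$ and $\delta$ extend as well.
\item $q$ extends to a section $q_U\in\mathcal A_U^\vee(U)$, again by the N\'eron property.
\item The extension $G_q$ extends to $\mathcal G_U:=(\id,q_U)^*\cP^\times_{\mathcal A_U}$, a semiabelian scheme which is an extension of $\mathcal A_U$ by $\Gm$.
\end{itemize}
Shrinking $U$ further, Lemma~\ref{lem:model-comparison} identifies $\mathcal G_U$ with the connected-fiber open subgroup of the N\'eron lft-model. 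Hence reduction orders computed in $\mathcal G_U$ are the $d_v$ of the paper.

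\textbf{Step 2: the section commutes with reduction.} Apply Proposition~\ref{prop:ribet-point} over the base $S=U$ to get $r_{\beta_U}$. By uniqueness of the normalized section, together with its compatibility with base change $\Spec K\to U$, its generic fiber is $r_\beta$. Therefore $r_{\beta_U}(q_U)$ is the unique extension of $R_\beta(q)$ to a $U$-section of $\mathcal G_U$. Base changing along $\Spec k_v\to U$ then shows that $\overline{R_\beta(q)}_v=r_{\beta_{k_v}}(\overline q_v)$ in the fiber $(\mathcal G_U)_{k_v}=G_{\overline q_v}$.

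\textbf{Step 3: the torsion identities.} Put $m_v=\ord(\overline q_v)$, which is finite because $\mathcal A_U^\vee(k_v)$ is finite. Then $\overline q_v\in A^\vee[m_v](k_v)$, viewed as a $T$-point of the finite flat group scheme $A^\vee[m_v]$ with $T=\Spec k_v$. Applying \eqref{eq:ribet-mu} to $T$ and $x=\overline q_v$ gives the first displayed identity. Applying \eqref{eq:ribet-square} gives $[m_v^2]\overline{R_\beta(q)}_v=0$, hence the order divides $m_v^2$.

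No hypothesis that the residue characteristic be prime to $m_v$ is needed. The identity is proved in Proposition~\ref{prop:ribet-point} for arbitrary $T$-points of $A^\vee[m]$ over an arbitrary base, with $e_m^A$ the scheme-theoretic Cartier pairing on the finite flat $\mathcal A[m]\times\mathcal A^\vee[m]$. It therefore applies equally when $\mu_{m_v}$ is non-étale over $k_v$; there the value lies in $\mu_{m_v}(k_v)$, and it may simply be trivial on the $p$-part.

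\textbf{Main obstacle.} The only delicate point is justifying that the generic normalized section equals the restriction of the section built over $U$. In particular, one must check that the generic normalization, Weil pairing convention, and biduality identification of $\widehat\beta$ all agree with those of $\mathcal A_U$. I would settle this by invoking the uniqueness clause of Proposition~\ref{prop:ribet-point} over $U$: both sections are normalized sections of the same torsor over $\mathcal A_U^\vee$, which is integral with generic fiber dense. Separatedness then forces them to agree.
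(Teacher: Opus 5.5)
Your proposal is correct and follows essentially the same route as the paper: spread out the abelian schemes, Poincar\'e biextension, $\beta$ and $q$, use the base-change compatibility of the normalized section to identify $\overline{R_\beta(q)}_v$ with $r_\beta(\overline q_v)$, and apply Proposition~\ref{prop:ribet-point} over $k_v$ to $\overline q_v\in A^\vee[m_v](k_v)$. The paper treats $m_v=1$ separately via normalization, whereas your argument covers it through the convention that $e_1^A$ is trivial. Your ``main obstacle'' needs no separate argument, since it is already the base-change clause of Proposition~\ref{prop:ribet-point}.
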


\begin{proof}
Spread out the abelian schemes, the Poincar\'e biextension, $\beta$, and the
section $q$.  If $m_v=1$, then $\overline q_v=0$ and normalization gives
$\overline{R_\beta(q)}_v=0$, so both assertions are immediate.  If $m_v>1$,
apply Proposition~\ref{prop:ribet-point} after base change to the residue field.
\end{proof}

\subsection{The torsion-shift amplification principle}\label{sec:finite}

The arithmetic input of the paper is an elementary finite-group argument.  We
state it first in its sharp primewise form and then globalize it to denominator
ideals.  The resulting theorem is independent of Ribet points; the
Poincar\'e-biextension construction will supply its hypotheses in
\cref{sec:main}.

The following primewise inequality is the elementary engine of the paper.
It measures how much of a fixed kernel torsion point must remain visible after
translation by a lift of controlled order.

\begin{lemma}\label{lem:amplification}
Let
\[
 0\longrightarrow T\longrightarrow H\xrightarrow{\pi}B\longrightarrow0
\]
be an exact sequence of finite abelian groups.  Let
\[
 R\in H,\qquad Q=\pi(R),\qquad t\in T,\qquad P=R+t.
\]
Fix a rational prime $\ell$, put $a=v_\ell(\ord(t))$, and suppose that
\begin{equation}\label{eq:order-control}
 v_\ell(\ord(R))
 \le c\,v_\ell(\ord(Q))+b
\end{equation}
for integers $c\ge1$ and $b\ge0$.  Then
\begin{equation}\label{eq:amplification-conclusion}
 v_\ell(\ord(P))
 \ge
 \max\left\{0,\left\lceil\frac{a-b}{c}\right\rceil\right\}.
\end{equation}
\end{lemma}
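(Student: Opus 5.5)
Write $o=\ord(P)$, $k=v_\ell(o)$, $q=v_\ell(\ord(Q))$, $r=v_\ell(\ord(R))$. We may assume $a>b$, since otherwise the right-hand side of \eqref{eq:amplification-conclusion} is $0$ and there is nothing to prove. The plan is to bound $a$ above by $ck+b$, which immediately gives $k\ge (a-b)/c$ and hence $k\ge\lceil (a-b)/c\rceil$ because $k$ is an integer.

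The first step is to relate $q$ to $k$. Since $\pi(t)=0$, we have $\pi(P)=Q$, so $\ord(Q)\mid\ord(P)$ and in particular $q\le k$. Combined with \eqref{eq:order-control} this gives $r\le cq+b\le ck+b$.

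The second step compares the $\ell$-parts of the orders of $t$ and $R$ through $t=P-R$. I will work in the $\ell$-primary components: for an element $x$ of a finite abelian group, $v_\ell(\ord(x))$ equals the order exponent of its $\ell$-primary component $x_\ell$, and the map $x\mapsto x_\ell$ is a homomorphism. Hence $t_\ell=P_\ell-R_\ell$. Both $P_\ell$ and $R_\ell$ are killed by $\ell^{\max(k,r)}$, so $a\le\max(k,r)$. Since $c\ge1$ and $b\ge0$, we have $k\le ck+b$, and also $r\le ck+b$ from the first step. Therefore $a\le ck+b$, which is the desired bound.

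I do not expect any real obstacle here; the argument is elementary. The only point to be careful about is using $\ell$-primary decomposition rather than full orders, so that the prime-to-$\ell$ parts of $\ord(R)$ and $\ord(P)$ do not interfere. The hypotheses $c\ge1$ and $b\ge0$ are used exactly once, to absorb the $\max(k,r)$ into the single bound $ck+b$.
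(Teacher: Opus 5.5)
Your proof is correct and uses the same ingredients as the paper: the projection gives $v_\ell(\ord(Q))\le v_\ell(\ord(P))$, and comparing $\ell$-primary parts in $t=P-R$ gives $a\le\max(k,r)\le ck+b$. The paper instead multiplies by $d=\ord(P)$, compares the $\ell$-primary orders of $dR=-dt$, and splits into the cases $s\ge a$ and $s<a$ (obtaining $r=a$ in the latter); your $\ell$-primary projection reaches the same bound in one inequality, without the case split.
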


\begin{proof}
Write
\[
 d=\ord(P),\qquad s=v_\ell(d),\qquad
 u=v_\ell(\ord(Q)),\qquad r=v_\ell(\ord(R)).
\]
Projection of $dP=0$ gives $dQ=0$, hence $u\le s$.  Moreover,
\begin{equation}\label{eq:dR-dt}
 dR=-dt.
\end{equation}
The $\ell$-primary orders of the two sides are
\[
 \ell^{\max(r-s,0)}
 \qquad\text{and}\qquad
 \ell^{\max(a-s,0)},
\]
respectively.  If $s\ge a$, then \eqref{eq:amplification-conclusion} is
immediate.  Suppose $s<a$.  The right-hand side of \eqref{eq:dR-dt} has
positive $\ell$-primary order, so $r>s$ and
\[
 a-s=r-s.
\]
Using \eqref{eq:order-control} and $u\le s$, we obtain
\[
 a-s=r-s\le cu+b-s\le(c-1)s+b.
\]
Thus $a-b\le cs$, which is precisely
\eqref{eq:amplification-conclusion}.
\end{proof}

\begin{remark}
If $\ord(R)$ divides $\ord(Q)^2$, then $c=2$ and $b=0$.  Translation by an
element of order $\ell^{2r-1}$ forces $\ell^r$ to divide $\ord(P)$.  This
scale is sharp under the stated hypotheses.  Indeed, let
\[
 T=\Z/\ell^{2r-1}\Z,\qquad B=\Z/\ell^r\Z,\qquad H=T\oplus B.
\]
If $g$ generates $T$, $u=\ell^{r-1}g$, $Q$ generates $B$, and
\[
 t=(g,0),\qquad R=(-g+u,Q),
\]
then $v_\ell(\ord(R))=2r-1\le2r=2v_\ell(\ord(Q))$, whereas
$P=R+t=(u,Q)$ has order $\ell^r$.  In particular, translation by a nonzero
element of order $\ell$ forces $\ell\mid\ord(P)$.
\end{remark}

We will also use the elementary fact that no cancellation occurs between
commuting elements of coprime order.

\begin{lemma}\label{lem:coprime-orders}
Let $x$ and $y$ be commuting elements of finite orders $a$ and $b$.  If
$\gcd(a,b)=1$, then
\[
 \ord(xy)=ab.
\]
\end{lemma}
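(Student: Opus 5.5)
Since $x$ and $y$ commute, the subgroup they generate is abelian, and $(xy)^k=x^ky^k$ for every integer $k$. The plan is to show two divisibilities: that $\ord(xy)$ divides $ab$, and that $ab$ divides $\ord(xy)$.

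First, $(xy)^{ab}=(x^a)^b(y^b)^a=1$, so $\ord(xy)\mid ab$.

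Conversely, let $k=\ord(xy)$, so $x^k=y^{-k}$. This common element lies in $\langle x\rangle\cap\langle y\rangle$. Its order divides both $a$ and $b$, hence divides $\gcd(a,b)=1$, so $x^k=y^k=1$. Therefore $a\mid k$ and $b\mid k$, and by coprimality $ab\mid k$.

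Combining the two divisibilities gives $\ord(xy)=ab$. The only point requiring care is the use of commutativity in expanding $(xy)^k$; there is no substantive obstacle.
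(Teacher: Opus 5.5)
Your proposal is correct and follows the same route as the paper's proof: $\ord(xy)\mid ab$, and $(xy)^k=1$ puts $x^k=y^{-k}$ in $\langle x\rangle\cap\langle y\rangle$, which is trivial. Hence $a\mid k$, $b\mid k$, and therefore $ab\mid k$.
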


\begin{proof}
The order of $xy$ divides $ab$.  If $(xy)^n=1$, then $x^n=y^{-n}$ lies in
$\langle x\rangle\cap\langle y\rangle$, which is trivial because the two
cyclic groups have coprime orders.  Hence $a\mid n$ and $b\mid n$, so
$ab\mid n$.
\end{proof}

We now combine the finite-group estimate with the denominator criterion and
local freezing.  The resulting theorem isolates a reusable arithmetic
principle independently of the Poincar\'e construction.

\begin{proposition}
\label{thm:abstract-shift}
Let
\[
 1\longrightarrow T\longrightarrow G\xrightarrow{\pi}B\longrightarrow0
\]
be an exact sequence of semiabelian varieties over a number field $K$, with
$T$ a torus.  Let $R\in G(K)$, put $Q_0=\pi(R)$, let $t\in T(K)$ be torsion,
and set $P=R+t$.  Assume that $P$ has infinite order.  Suppose that there is a
finite set $S$ such that, for every rational prime $\ell$, one has chosen
integers $c_\ell\ge1$ and $b_\ell\ge0$ satisfying
\begin{equation}\label{eq:abstract-local-control}
 v_\ell\bigl(\ord(\overline R_v)\bigr)
 \le c_\ell v_\ell\bigl(\ord(\overline Q_{0,v})\bigr)+b_\ell
 \qquad(v\notin S).
\end{equation}
Assume also that $t$ retains its generic order in every fiber outside $S$, and
define
\begin{equation}\label{eq:abstract-N}
 N_{\mathbf c,\mathbf b}(t):=\prod_\ell
 \ell^{\max\{0,\lceil(v_\ell(\ord t)-b_\ell)/c_\ell\rceil\}}.
\end{equation}
Only primes dividing $\ord(t)$ contribute.  Then, after enlarging $S$ if
necessary,
\[
 N_{\mathbf c,\mathbf b}(t)\mid d_v(P)
 \qquad(v\notin S).
\]
If $N_{\mathbf c,\mathbf b}(t)>1$, then:
\begin{enumerate}[label=\textup{(\roman*)}]
\item for every $n\ge1$ with $N_{\mathbf c,\mathbf b}(t)\nmid n$,
\[
 \dd_{\Ner}(nP)\cO_{K,S}=\cO_{K,S};
\]
\item there is a squarefree integer $Q$, divisible by
$\rad(N_{\mathbf c,\mathbf b}(t))$, such that
\begin{equation}\label{eq:abstract-coprime-return}
 (n,Q)=1\quad\Longrightarrow\quad
 \dd_{\Ner}(nP)=\dd_{\Ner}(P);
\end{equation}
\item $\mathscr T(G,P)$ has lower natural density at least $\varphi(Q)/Q$, and
for every rational prime $r\nmid Q$ and every $a\ge1$,
\[
 \dd_{\Ner}(r^{a}P)=\dd_{\Ner}(P).
\]
\end{enumerate}
\end{proposition}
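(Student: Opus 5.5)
We first enlarge $S$ until the whole exact sequence $1\to T\to G\to B\to0$ spreads out over $\cO_{K,S}$ as an exact sequence of semiabelian schemes. We also require that $R$, $t$ and $P$ extend to sections, that \cref{lem:model-comparison} applies at every $v\notin S$, and that $t$ keeps its generic order $\ord(t)$ in the fibers; this last condition is already assumed. For $v\notin S$, reduction then gives an exact sequence of finite abelian groups
\[
 0\longrightarrow \mathcal T(k_v)\longrightarrow \mathcal G(k_v)\xrightarrow{\pi}\mathcal B(k_v).
\]
Replacing $\mathcal B(k_v)$ by the image of $\pi$ makes this a short exact sequence of finite groups, in which $\overline P_v=\overline R_v+\overline t_v$ and $\pi(\overline R_v)=\overline Q_{0,v}$.

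We then fix a prime $\ell\mid\ord(t)$ and apply \cref{lem:amplification} to this finite sequence with $R=\overline R_v$, $t=\overline t_v$, $c=c_\ell$ and $b=b_\ell$. The control hypothesis \eqref{eq:order-control} is exactly \eqref{eq:abstract-local-control}. Since $\overline t_v$ has the same order as $t$, we have $a=v_\ell(\ord t)$, and the lemma gives $v_\ell(d_v(P))\ge\max\{0,\lceil(a-b_\ell)/c_\ell\rceil\}$. Primes $\ell\nmid\ord(t)$ contribute the exponent $0$ automatically. Taking the product over the finitely many $\ell\mid\ord(t)$ yields $N_{\mathbf c,\mathbf b}(t)\mid d_v(P)$ for all $v\notin S$.

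For (i), suppose $N:=N_{\mathbf c,\mathbf b}(t)\nmid n$. Then $d_v(P)\nmid n$ for each $v\notin S$, since otherwise $N\mid d_v(P)\mid n$. By \cref{lem:reduction-criterion} this gives $v(\dd_{\Ner}(nP))=0$ for all $v\notin S$, which is the claimed equality $\dd_{\Ner}(nP)\cO_{K,S}=\cO_{K,S}$. For (ii), we apply \cref{prop:finite-cone-return} with $r=1$ and $h_1=N>1$. Its hypotheses hold: $P$ has infinite order, $G$ has finite-type semiabelian reduction outside $S$, and $d_v(P)\in N\N$ there. The resulting $Q$ is squarefree and divisible by $\rad(N)$, and it satisfies \eqref{eq:abstract-coprime-return}. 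Part (iii) is then exactly the density and prime-power consequence stated in \cref{prop:finite-cone-return}.

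The main obstacle is the spreading-out step, namely making sure the reduced sequence is genuinely exact with $\ker\pi=\mathcal T(k_v)$. The reduction orders of $\overline R_v$ and $\overline P_v$ must be computed in a group where the kernel of projection is exactly the reduced torus, so that the finite-group lemma applies. To get this, I would first argue that a short exact sequence of semiabelian varieties extends to an exact sequence of semiabelian schemes over a dense open set, for example by spreading out the morphisms and using fiberwise exactness on an open set. Exactness of the rational points on the left is immediate, and surjectivity on the right is not needed once we pass to the image. Any order comparisons between $\mathcal G_U$ and the Néron lft-model are handled by \cref{lem:model-comparison}(ii).
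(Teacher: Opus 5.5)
Your proposal is correct and follows the paper's proof essentially step for step. You spread out over $\cO_{K,S}$, apply \cref{lem:amplification} primewise to $0\to\mathcal T(k_v)\to\mathcal G(k_v)\to\im(\pi)\to0$, deduce (i) from \cref{lem:reduction-criterion}, and obtain (ii)--(iii) from \cref{prop:finite-cone-return} with the single cone $h_1=N_{\mathbf c,\mathbf b}(t)$. Your explicit check that the kernel of the reduced projection is exactly $\mathcal T(k_v)$ is a detail the paper leaves implicit in the phrase ``spread out the exact sequence.''
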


\begin{proof}
After enlarging $S$, spread out the exact sequence and the points to compatible
finite-type semiabelian models
\[
 1\longrightarrow\mathcal T_U\longrightarrow\mathcal G_U
 \longrightarrow\mathcal B_U\longrightarrow1
\]
over $U=\Spec\cO_{K,S}$.  For every $v\notin S$, apply
Lemma~\ref{lem:amplification} prime by prime to
\[
 0\longrightarrow\mathcal T_U(k_v)\longrightarrow\mathcal G_U(k_v)
 \longrightarrow
 \im\bigl(\mathcal G_U(k_v)\to\mathcal B_U(k_v)\bigr)
 \longrightarrow0.
\]
This gives $N_{\mathbf c,\mathbf b}(t)\mid d_v(P)$.  If
$N_{\mathbf c,\mathbf b}(t)\nmid n$ and a place $v\notin S$ divided the
denominator of $nP$, then Lemma~\ref{lem:reduction-criterion} would give
$d_v(P)\mid n$, a contradiction.  This proves (i).

For (ii), apply Proposition~\ref{prop:finite-cone-return} with the single cone
$h_1=N_{\mathbf c,\mathbf b}(t)$.  Assertion (iii) is part of that
proposition.
\end{proof}

The density argument required for Ribet points is independent of the
amplification calculation.  A proper connected subgroup surjecting onto the
abelian quotient would split an isogenous pullback of a non-torsion
extension.

\begin{lemma}
\label{lem:dense-lift}
Let
\[
 1\longrightarrow\Gm\longrightarrow G_q\xrightarrow{\pi}A\longrightarrow0
\]
be an extension over a field $K$ of characteristic zero, represented by a
non-torsion point $q\in A^\vee(K)$.  If $P\in G_q(K)$ has Zariski-dense
projection $\pi(P)$ in $A$, then $\mathbf ZP$ is Zariski dense in $G_q$.
\end{lemma}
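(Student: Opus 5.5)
The plan is to study the Zariski closure $H$ of $\mathbf ZP$. It is an algebraic subgroup of $G_q$, and its identity component $H^\circ$ has finite index in $H$. We will show $H^\circ=G_q$. Since $\pi(H)$ is a closed algebraic subgroup containing $\mathbf Z\pi(P)$, density of $\pi(P)$ gives $\pi(H)=A$. The image $\pi(H^\circ)$ is then a connected closed subgroup of finite index in $A$, so $\pi(H^\circ)=A$ because $A$ is connected. All of this may be done after base change to $\overline K$: the closure commutes with field extension in characteristic zero, and density over $\overline K$ implies density over $K$.

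Next, consider the intersection $H^\circ\cap\iota(\Gm)$. This is a closed subgroup of $\Gm$, so it is either all of $\Gm$ or a finite group $\mu_m$.

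In the first case, $H^\circ$ contains the toric kernel and surjects onto $A$, hence $H^\circ=G_q$ and we are done.

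In the second case, $\dim H^\circ=\dim A$. Then $\pi|_{H^\circ}:H^\circ\to A$ is a surjective homomorphism of connected groups with finite kernel, hence an isogeny. Moreover $H^\circ$ is a connected commutative group with no toric part beyond a finite group, so it is an abelian variety $A'$ isogenous to $A$.

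To derive a contradiction in the second case, consider the composite $\phi:H^\circ\to A$, which is an isogeny of degree $N$. The inclusion $H^\circ\hookrightarrow G_q$ gives a section of the pullback extension $\phi^*G_q$, so $\phi^*G_q$ splits over $H^\circ$. By functoriality of the Barsotti--Weil identification, the class of $\phi^*G_q$ is $\widehat\phi(q)\in (H^\circ)^\vee$, so $\widehat\phi(q)=0$. Choose $\psi:A\to H^\circ$ with $\phi\circ\psi=[N]_A$. Then $[N]q=\widehat\psi\,\widehat\phi(q)=0$. This contradicts the assumption that $q$ is non-torsion, even over $\overline K$, since torsion is a geometric property of the point $q$.

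The main obstacle is making the splitting step and the identification $\phi^*G_q\leftrightarrow\widehat\phi(q)$ precise. One must check that the inclusion $H^\circ\hookrightarrow G_q$ really gives a homomorphic section of the fibre product $H^\circ\times_A G_q$; it does, as the map $h\mapsto(h,h)$. One must also apply the Barsotti--Weil functoriality quoted from Serre for pullbacks along homomorphisms of abelian varieties.
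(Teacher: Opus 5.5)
Your proposal is correct and follows essentially the same argument as the paper. You take the closure $H$, show $\pi(H^0)=A$, and note that if $H^0\cap\Gm$ is finite then $H^0\to A$ is an isogeny from an abelian variety splitting the pulled-back extension, whose class $u^\vee(q)$ must vanish, forcing $q$ to be torsion. The only cosmetic differences are these. The paper works directly over $K$ and gets properness of $H^0$ from its finiteness over the proper $A$, which is cleaner than your ``no toric part'' remark. You, in turn, make the finite-kernel step explicit via $\psi$ with $\phi\circ\psi=[N]_A$.
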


\begin{proof}
Let
\[
 H=\overline{\mathbf ZP}^{\,\Zar}\subseteq G_q.
\]
Because multiplication and inversion preserve the dense subgroup
$\mathbf ZP$, the closure $H$ is a closed algebraic subgroup.  Since the
projection of $P$ is dense, $\pi(H)=A$.  The quotient
$A/\pi(H^0)$ is connected, as a quotient of the connected abelian variety
$A$, and finite, because it is generated by the image of the finite group
$H/H^0$.  Hence
\begin{equation}\label{eq:dense-projection-H0}
 \pi(H^0)=A.
\end{equation}

Suppose $H^0\ne G_q$.  If $H^0\cap\Gm$ had positive dimension, then it would
contain the whole one-dimensional torus and
\eqref{eq:dense-projection-H0} would force
$\dim H^0=\dim A+1=\dim G_q$, a contradiction.  Thus
\[
 u:=\pi|_{H^0}:H^0\longrightarrow A
\]
is surjective with finite kernel.  It is therefore finite.  Since $A$ is
proper, $H^0$ is proper; hence $H^0$ is an abelian variety and $u$ is an
isogeny.

The inclusion $j:H^0\hookrightarrow G_q$ gives a splitting of the pullback
extension $u^*G_q$ via $x\mapsto(x,j(x))$.  Under
\[
 \Ext_K^1(H^0,\Gm)\simeq(H^0)^\vee(K),
\]
the pullback class is $u^\vee(q)$.  Its vanishing implies $q\in\ker(u^\vee)$.  Since $u^\vee$ is an
isogeny, this kernel is finite, so $q$ is torsion, a contradiction.
Therefore $H^0=G_q$, and hence $H=G_q$.
\end{proof}

\subsection{The arithmetic Ribet-shift theorem}\label{sec:main}

Throughout this subsection, $A/K$ is a positive-dimensional abelian variety.  Let
\[
 \beta:A^\vee\longrightarrow A,
 \qquad
 \delta=\beta-\widehat\beta,
\]
and assume that $\delta$ is an isogeny.  Denote by $e_\delta$ the exponent of
its geometric kernel, namely the least positive integer for which
\begin{equation}\label{eq:kernel-exponent}
 [e_\delta]\ker(\delta)=0.
\end{equation}
Equivalently, multiplication by $e_\delta$ descends through $\delta$: there
is a homomorphism
\[
 \gamma:A\longrightarrow A^\vee
 \quad\text{such that}\quad
 \gamma\circ\delta=[e_\delta]_{A^\vee}.
\]
Since $\delta$ is surjective, one also has
$\delta\circ\gamma=[e_\delta]_A$.

Fix $q\in A^\vee(K)$, put $R=R_\beta(q)$, and let
\[
 t=\iota(\zeta)\in\Gm(K)\subseteq G_q(K)
\]
for a root of unity $\zeta\in K^\times$.  Set $P=R+t$ and write
\[
 h:=\ord(t)=\prod_\ell\ell^{a_\ell}.
\]

The Ribet bound has a canonical primewise form.  The next definition records
exactly the part of each primary component of the translating torsion point
that survives the quadratic loss and the kernel exponent.

\begin{definition}\label{def:Ndelta}
For every rational prime $\ell$, define
\begin{equation}\label{eq:rho-definition}
 \rho_\ell(\delta,t):=
 \max\left\{0,
 \left\lceil\frac{a_\ell-2v_\ell(e_\delta)}{2}\right\rceil
 \right\},
\end{equation}
and set
\begin{equation}\label{eq:N-definition}
 N_{\delta,t}:=\prod_\ell\ell^{\rho_\ell(\delta,t)}.
\end{equation}
Only primes dividing $h$ occur in this finite product.
\end{definition}

\begin{remark}
If $\mathbf Z(\delta q)$ is Zariski dense in $A$, then $\delta(A^\vee)$ is an
abelian subvariety containing a dense subset of $A$, and hence equals $A$.
Since $\dim A^\vee=\dim A$, the homomorphism $\delta$ is automatically an
isogeny.  We retain the hypothesis because the kernel exponent is part of the
quantitative statement.
\end{remark}

We can now state the principal arithmetic theorem.  It combines geometric
nonsplitting and Zariski density with a universal divisor of reduction orders
and a progression of full denominator returns.

\begin{theorem}
\label{thm:main}
Assume that $\mathbf Z(\delta q)$ is Zariski dense in $A$, and let
$\Ner/\OK$ be the N\'eron lft-model of $G_q$.  Then $G_q$ is geometrically
nonsplit, $P$ is non-torsion, and $\mathbf ZP$ is Zariski dense in $G_q$.
These geometric conclusions do not require $N_{\delta,t}>1$.

Assume in addition that $N_{\delta,t}>1$.  Then:
\begin{enumerate}[label=\textup{(\roman*)}]
\item there is a finite set $S$ of finite places such that
\begin{equation}\label{eq:N-divides-orders}
 N_{\delta,t}\mid d_v(P)\qquad(v\notin S);
\end{equation}
\item if $N_{\delta,t}\nmid n$, then
\begin{equation}\label{eq:localized-blanking}
 \dd_{\Ner}(nP)\cO_{K,S}=\cO_{K,S};
\end{equation}
\item there is a squarefree integer $Q=Q(G_q,P,S)$ satisfying
$\rad(N_{\delta,t})\mid Q$ and
\begin{equation}\label{eq:global-coprime-return}
 (n,Q)=1
 \quad\Longrightarrow\quad
 \dd_{\Ner}(nP)=\dd_{\Ner}(P);
\end{equation}
consequently $\mathscr T(G_q,P)$ has lower natural density at least
$\varphi(Q)/Q$ and contains the progression $1+Q\Z_{\ge0}$;
\item for every rational prime $r\nmid Q$ and every $a\ge1$,
\begin{equation}\label{eq:prime-power-return}
 \dd_{\Ner}(r^{a}P)=\dd_{\Ner}(P),
 \qquad
 r^a\notin\mathscr R(G_q,P).
\end{equation}
In particular, all but finitely many rational primes are both full return
indices and missing exact reduction orders.
\end{enumerate}
\end{theorem}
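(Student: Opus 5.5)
The proof has two parts: the geometric conclusions, which do not need $N_{\delta,t}>1$, and the arithmetic conclusions, which come from verifying the hypotheses of Proposition~\ref{thm:abstract-shift} with $c_\ell=2$ and $b_\ell=2v_\ell(e_\delta)$. With these choices the integer \eqref{eq:abstract-N} is exactly $N_{\delta,t}$ of Definition~\ref{def:Ndelta}.

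\emph{Geometric part.} Since $\Z(\delta q)$ is Zariski dense in the positive-dimensional $A$, the point $\delta q$ is non-torsion. Because $\gamma\circ\delta=[e_\delta]$, the point $q$ is non-torsion as well. Hence the extension class is non-torsion and $G_q$ is nonsplit over $K$. The class stays non-torsion over $\overline K$, and $\Ext^1_{\overline K}(A,\Gm)\simeq A^\vee(\overline K)$ compatibly with base change, so $G_q$ is geometrically nonsplit. Next, $\pi(P)=\pi(R_\beta(q))=\delta q$, since $t$ lies in the toric kernel. Lemma~\ref{lem:dense-lift} then gives that $\Z P$ is Zariski dense in $G_q$. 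In particular $P$ is non-torsion, because $\dim G_q\ge2$ and the closure of a finite group is finite.

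\emph{Arithmetic part.} Enlarge $S$ so that the following hold outside $S$: everything spreads out as in Corollary~\ref{cor:ribet-reduction}; $\zeta$ keeps exact order $h$ (delete the places dividing $h$ and the places where $\zeta^{h'}\equiv1$ for some proper divisor $h'$ of $h$); and $\gamma$ and $\delta$ extend to homomorphisms of the abelian schemes with $\gamma\circ\delta=[e_\delta]$. Fix $v\notin S$ and put $m_v=\ord(\overline q_v)$ and $u=\ord(\overline{\delta q}_v)$. Corollary~\ref{cor:ribet-reduction} gives $\ord(\overline R_v)\mid m_v^2$. From $[e_\delta]\overline q_v=\gamma(\overline{\delta q}_v)$ we get $m_v\mid e_\delta u$. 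Hence, for every prime $\ell$,
\[
 v_\ell(\ord\overline R_v)\le 2v_\ell(m_v)\le 2v_\ell(\ord\overline{Q}_{0,v})+2v_\ell(e_\delta),
\]
where $Q_0=\delta q$. This is \eqref{eq:abstract-local-control} with the stated $c_\ell,b_\ell$.

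Proposition~\ref{thm:abstract-shift} now applies, since $P$ has infinite order. It gives (i) and (ii) directly, and gives (iii) with $Q$ squarefree and $\rad(N_{\delta,t})\mid Q$. The progression $1+Q\Z_{\ge0}$ is contained in $\mathscr T(G_q,P)$ because every $n\equiv1\pmod Q$ is coprime to $Q$.

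For (iv), the equality $\dd_{\Ner}(r^aP)=\dd_{\Ner}(P)$ is part of Proposition~\ref{thm:abstract-shift}(iii). To see that $r^a$ is not an exact reduction order, suppose $d_v(P)=r^a$ for some $v$. Since $r^a>1$, Proposition~\ref{prop:intro-return-excludes-order} rules this out at every place of semiabelian reduction. Alternatively, for $v\notin S$ we have $N_{\delta,t}\mid d_v(P)$, and $N_{\delta,t}$ has a prime factor dividing $Q$, whereas $r\nmid Q$. Since $\mathscr R(G_q,P)$ only uses places in $U$, this is consistent with its definition.

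The step that needs the most care is the spreading-out bookkeeping. First, $\gamma\circ\delta=[e_\delta]$ must persist on reductions so that $m_v\mid e_\delta u$. Second, the groups $\mathcal G_U(k_v)$ must fit the exact sequence used in Lemma~\ref{lem:amplification}, with $\overline Q_{0,v}$ the projection of $\overline R_v$. Both hold after deleting finitely many places.
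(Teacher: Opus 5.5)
Your proposal is correct and follows essentially the paper's argument. Both proofs reduce the geometric part to Lemma~\ref{lem:dense-lift}, derive the bound $v_\ell(\ord\overline R_v)\le 2v_\ell(\ord\delta\overline q_v)+2v_\ell(e_\delta)$ from $\ord(\overline R_v)\mid m_v^2$ and $m_v\mid e_\delta\ord(\delta\overline q_v)$, feed it into the amplification lemma and Proposition~\ref{thm:abstract-shift} with $c_\ell=2$ and $b_\ell=2v_\ell(e_\delta)$, and use Proposition~\ref{prop:intro-return-excludes-order} for the missing orders. One small slip: $q$ is non-torsion simply because $\delta$ is a homomorphism (a torsion $q$ would make $\delta q$ torsion), whereas the identity $\gamma\circ\delta=[e_\delta]$ gives the opposite implication and is not needed at that step.
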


\begin{proof}
The density assumption and $\dim A>0$ imply that $q$ is non-torsion.  Under
$\Ext_K^1(A,\Gm)\simeq A^\vee(K)$, the geometric extension class is
$q_{\overline K}$, which is non-torsion and hence nonzero.  Therefore $G_q$ is
geometrically nonsplit.  Since $\pi(P)=\delta q$ is dense,
Lemma~\ref{lem:dense-lift} proves the geometric assertions.

Enlarge a finite set $S$ and put $U=\Spec\cO_{K,S}$.  Choose abelian schemes
$\mathcal A_U^\vee/U$ and $\mathcal A_U/U$ extending $A^\vee$ and $A$, and a
semiabelian scheme $\mathcal G_U/U$ extending $G_q$, so that $\beta$,
$\widehat\beta$, $\delta$, $q$, the normalized Poincar\'e biextension, the
normalized Ribet section, and $t$ all spread out compatibly over $U$.  Require
also that the extended map
$\delta:\mathcal A_U^\vee\to\mathcal A_U$ be an isogeny whose kernel is killed
by $e_\delta$ and that $t$ retain exact order $h$ in every geometric fiber.

Fix $v\notin S$ and put
\[
 m_v:=\ord(\overline q_v),
 \qquad
 n_v:=\ord(\delta\overline q_v).
\]
Since $[n_v]\overline q_v\in\ker\delta$,
\begin{equation}\label{eq:m-divides-en}
 m_v\mid e_\delta n_v.
\end{equation}
By Corollary~\ref{cor:ribet-reduction},
\begin{equation}\label{eq:R-order-m2}
 \ord(\overline R_v)\mid m_v^2.
\end{equation}
Consequently, for every rational prime $\ell$,
\begin{equation}\label{eq:R-vell-bound}
 v_\ell\bigl(\ord(\overline R_v)\bigr)
 \le2v_\ell(n_v)+2v_\ell(e_\delta).
\end{equation}

Since $H^1(k_v,\Gm)=0$, taking $k_v$-points gives an exact sequence
\[
 0\longrightarrow k_v^\times
 \longrightarrow\mathcal G_U(k_v)
 \longrightarrow\mathcal A_U(k_v)
 \longrightarrow0.
\]
Apply Lemma~\ref{lem:amplification} to this sequence
with
\[
 Q_0=\delta\overline q_v,
 \quad R=\overline R_v,
 \quad t=\overline t_v,
 \quad c=2,
 \quad b=2v_\ell(e_\delta).
\]
It follows that $v_\ell(d_v(P))\ge\rho_\ell(\delta,t)$ for every $\ell$,
proving (i).

The remaining assertions follow from Proposition~\ref{thm:abstract-shift} with these
control data.  More explicitly, (ii) is the reduction-order criterion applied
to \eqref{eq:N-divides-orders}; (iii) is
Proposition~\ref{prop:finite-cone-return} with $h_1=N_{\delta,t}$; and the equality in
(iv) follows from (iii).  The exclusion from $\mathscr R(G_q,P)$ follows from
Proposition~\ref{prop:intro-return-excludes-order}.
\end{proof}

The quantitative invariant has a particularly transparent qualitative
consequence when the kernel exponent and the shift order are coprime.

\begin{corollary}\label{cor:coprime-obstruction}
Let $h=\ord(t)>1$ and assume $\gcd(e_\delta,h)=1$.  Outside a finite set of
places,
\[
 \gcd(n,h)=1\quad\Longrightarrow\quad[n]\overline P_v\ne0.
\]
Equivalently, every reduction order outside that finite set has a nontrivial
common factor with $h$.  If $h$ is prime, then $h\mid d_v(P)$ at every place
outside that finite set.  In particular, the numerical condition $\gcd(\deg\delta,h)=1$ is sufficient.
\end{corollary}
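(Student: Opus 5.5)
The plan is to derive the corollary from Theorem~\ref{thm:main}(i), applied to the same finite set $S$, by computing $N_{\delta,t}$ when $\gcd(e_\delta,h)=1$. For every prime $\ell\mid h$ we have $v_\ell(e_\delta)=0$, so Definition~\ref{def:Ndelta} gives
\[
 \rho_\ell(\delta,t)=\left\lceil\frac{a_\ell}{2}\right\rceil\ge1
 \qquad(\ell\mid h),
\]
since $a_\ell\ge1$. Hence $\rad(h)\mid N_{\delta,t}$, and in particular $N_{\delta,t}>1$ because $h>1$. Theorem~\ref{thm:main}(i) then gives a finite set $S$ such that $N_{\delta,t}\mid d_v(P)$ for all $v\notin S$. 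It follows that every prime $\ell\mid h$ divides $d_v(P)$ for $v\notin S$, which is the equivalent formulation that every reduction order shares a nontrivial factor with $h$. In fact it shares all primes of $h$.

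For the displayed implication, suppose $\gcd(n,h)=1$ and $v\notin S$. Pick a prime $\ell\mid h$. Then $\ell\mid d_v(P)$ but $\ell\nmid n$, so $d_v(P)\nmid n$, and therefore $[n]\overline P_v\ne0$ by the definition of order, equivalently by Lemma~\ref{lem:reduction-criterion}. If $h$ is prime, then $N_{\delta,t}=h^{\lceil1/2\rceil}=h$, so $h\mid d_v(P)$ directly.

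It remains to check the sufficient condition $\gcd(\deg\delta,h)=1$. The kernel $\ker\delta$ is a finite group scheme of order $\deg\delta$, so it is killed by $\deg\delta$, and hence $e_\delta\mid\deg\delta$. This holds also geometrically and in characteristic zero, where $e_\delta$ is the exponent of the finite abelian group $\ker\delta(\overline K)$ of order $\deg\delta$. Therefore $\gcd(e_\delta,h)=1$. The only point needing mild care is this identification of $e_\delta$ with a divisor of the group order. The rest is a direct numerical specialization of Theorem~\ref{thm:main}, so I expect no real obstacle.
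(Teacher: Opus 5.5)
Your proof is correct, but it takes a different route from the paper. You specialize the quantitative invariant. Since $v_\ell(e_\delta)=0$ for every $\ell\mid h$, you get $\rho_\ell(\delta,t)=\lceil a_\ell/2\rceil\ge1$, hence $\rad(h)\mid N_{\delta,t}$ and $N_{\delta,t}>1$. Theorem~\ref{thm:main}(i), and behind it the amplification lemma with $c=2$ and $b=0$, then does the rest.

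The paper argues directly instead. Suppose $[n]\overline P_v=0$ with $\gcd(n,h)=1$. Projecting gives $[n]\overline q_v\in\ker\delta$, so $m_v\mid e_\delta n$ and $\gcd(m_v,h)=1$. Then $[n]\overline R_v$ has order dividing $m_v^2$, which is prime to $h$. But $[n]\overline t_v$ has exact order $h>1$, so the two cannot be negatives of each other.

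The two routes buy different things:
\begin{itemize}
\item The paper's argument is self-contained. It uses only Corollary~\ref{cor:ribet-reduction} and never invokes the density of $\mathbf Z(\delta q)$ or the amplification lemma.
\item Your route gives a sharper conclusion. You get $\prod_{\ell\mid h}\ell^{\lceil a_\ell/2\rceil}\mid d_v(P)$, so every prime of $h$ divides $d_v(P)$, not merely $\gcd(d_v(P),h)>1$.
\end{itemize}

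The price of your route is that you cite a theorem stated under the hypothesis that $\mathbf Z(\delta q)$ is Zariski dense, and the corollary does not state that hypothesis. This is harmless, because the proof of part (i) of Theorem~\ref{thm:main} never uses density; it uses only the spreading out, $m_v\mid e_\delta n_v$, and $\ord(\overline R_v)\mid m_v^2$. You should say this explicitly if you want the corollary for arbitrary $q$.

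Your justification of $e_\delta\mid\deg\delta$ by Lagrange on $\ker\delta(\overline K)$ is fine.
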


\begin{proof}
Suppose $[n]\overline P_v=0$ with $\gcd(n,h)=1$.  Projection gives
$[n]\overline q_v\in\ker\delta$.  Hence, if
$m_v=\ord(\overline q_v)$, then $m_v\mid e_\delta n$ and therefore
$\gcd(m_v,h)=1$.  The order of $[n]\overline R_v$ divides $m_v^2$ and is
prime to $h$, while $[n]\overline t_v$ has exact order $h$.  They cannot be
negatives of one another.  Taking $n=d_v(P)$ proves the order assertion.  The
exponent $e_\delta$ divides $\deg\delta$, so the last statement follows.
\end{proof}

Over $\Q$, the preceding theorem lies in the literal arithmetic setting of
Silverman's conjecture.

\begin{corollary}
\label{cor:silverman}
If $K=\Q$, then $(\Ner,P)$ satisfies all hypotheses of \SilvermanConj{}.
Moreover, for the squarefree integer $Q$ supplied by Theorem~\ref{thm:main},
\[
 (n,Q)=1\quad\Longrightarrow\quad D_{nP}=D_P.
\]
In particular $D_{r^{a}P}=D_P$ for every rational prime $r\nmid Q$ and every
$a\ge1$.
\end{corollary}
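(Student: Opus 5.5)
The plan is to verify the hypotheses of \SilvermanConj{} one at a time for the pair $(\Ner,P)$, working over $\Z$. The equality of integers will then follow from the ideal equality in Theorem~\ref{thm:main}(iii) by taking positive generators.

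\emph{Step one: the hypotheses of \SilvermanConj{}.} When $K=\Q$, the N\'eron lft-model $\Ner/\Z$ is a separated, smooth, commutative group scheme. By the N\'eron mapping property, $P\in G_q(\Q)$ extends uniquely to a section $P\in\Ner(\Z)$. The generic fiber is $G_q$, which is a connected extension of the abelian variety $A$ by $\Gm$, so it is an irreducible commutative algebraic group of dimension $\dim A+1\ge2$. It has no unipotent part, because a semiabelian variety has trivial unipotent radical: any homomorphism $\mathbf G_a\to G_q$ composed with $\pi$ is trivial, since $A$ is an abelian variety, so it lands in $\Gm$ and is therefore trivial. Finally, Theorem~\ref{thm:main} supplies that $P$ is non-torsion and that $\Z P$ is Zariski dense in $G_q$, with no assumption on $N_{\delta,t}$.

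\emph{Step two: the return statement.} By Definition~\ref{def:denominator}, $D_X\Z=\dd_{\Ner}(X)$ with $D_X>0$. So whenever $(n,Q)=1$, Theorem~\ref{thm:main}(iii) gives $D_{nP}\Z=D_P\Z$, and positivity of both generators forces $D_{nP}=D_P$. For a prime $r\nmid Q$ and $a\ge1$, we have $(r^a,Q)=1$, which gives $D_{r^aP}=D_P$; alternatively this is Theorem~\ref{thm:main}(iv). Since infinitely many primes do not divide $Q$, the conjecture's conclusion holds for infinitely many $n$.

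\emph{Main obstacle.} The only point needing care is that the return statements (iii) and (iv) of Theorem~\ref{thm:main} are proved under the assumption $N_{\delta,t}>1$. I would state this as an implicit hypothesis of the corollary, inherited from Theorem~\ref{thm:main} through the phrase ``the squarefree integer $Q$ supplied by Theorem~\ref{thm:main}''. The verification of the structural hypotheses in step one does not depend on it.

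A secondary point is that Silverman's formulation uses a group scheme $\mathcal G/\Z$ that need not be of finite type. I would remark that his conjecture is stated for an arbitrary group scheme with the prescribed generic fiber, so the lft-model is admissible. If finite type were demanded, one could replace $\Ner$ by a finite-type open subgroup containing the section $P$, chosen place by place; since that subgroup is open in $\Ner$, the argument of Lemma~\ref{lem:model-comparison} shows it does not change the denominator ideals.
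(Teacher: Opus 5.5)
Your proposal is correct and is essentially the paper's argument. The paper likewise notes that the generic fiber is a geometrically connected semiabelian variety of dimension $\dim A+1\ge2$, hence irreducible, commutative and without unipotent part. It then takes density and the return statement from Theorem~\ref{thm:main} (implicitly with $N_{\delta,t}>1$, as you rightly point out) and passes from equality of ideals to equality of positive generators.

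Your finite-type fallback is not needed, since the conjecture as stated here allows any group scheme and the paper uses $\Ner$ itself. It would also need an extra check: the toric kernel contributes a copy of $\Z$ to the component group of $\Ner$ at every prime. So a finite-type open subgroup through $P$ exists only if the component class of $P$ has finite order at every prime.
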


\begin{proof}
The generic fiber is a geometrically connected semiabelian variety of
dimension $\dim A+1\ge2$, hence is irreducible, commutative, and has no
unipotent part.  Density and the return statement are Theorem~\ref{thm:main}; equality
of ideals over $\Z$ is equality of their positive generators.
\end{proof}

Recall that $\mathscr R(G,X)$ denotes the exact reduction-order spectrum of
$X$.  Adding or deleting finitely many places changes this set by at most
finitely many integers.  For $\mathscr S\subseteq\N$, put
\[
 \overline d(\mathscr S)
 :=\limsup_{Y\to\infty}\frac{\#(\mathscr S\cap[1,Y])}{Y},
 \qquad
 \underline d(\mathscr S)
 :=\liminf_{Y\to\infty}\frac{\#(\mathscr S\cap[1,Y])}{Y}.
\]

\begin{proposition}
\label{prop:sparse-orders-general}
Under the hypotheses of Theorem~\ref{thm:main}, assume $N_{\delta,t}>1$.  Then
there is a finite set $F\subset\N$
such that
\[
 \mathscr R(G_q,P)\subseteq N_{\delta,t}\N\cup F.
\]
Consequently,
\[
 \overline d\bigl(\mathscr R(G_q,P)\bigr)
 \le\frac1{N_{\delta,t}},
 \qquad
 \underline d\bigl(\N\setminus\mathscr R(G_q,P)\bigr)
 \ge1-\frac1{N_{\delta,t}}.
\]
For the points $P_M$ constructed in
Theorem~\ref{thm:squarefree-M} below, one has $N_{\delta,t}=M$.
\end{proposition}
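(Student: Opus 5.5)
The plan is to read the inclusion directly off Theorem~\ref{thm:main}(i) and then turn it into a density bound by elementary counting. The one point to fix is the domain of $\mathscr R(G_q,P)$. It is defined using some nonempty open $U\subseteq\Spec\OK$ over which $G_q$ and $P$ spread out, and shrinking $U$ changes it by only finitely many integers. So we may take $U$ from the definition and the finite set $S$ from Theorem~\ref{thm:main}(i), and write
\[
 \mathscr R(G_q,P)=\{d_v(P):v\in U,\ v\notin S\}\cup\{d_v(P):v\in U\cap S\}.
\]
For the places $v\in U\setminus S$, Lemma~\ref{lem:model-comparison}(ii) shows that the order of $\overline P_v$ is the same whether computed on $\mathcal G_U$ or on the model used in the proof of Theorem~\ref{thm:main}; both sit as open subgroups of the N\'eron lft-model near the identity component. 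Then \eqref{eq:N-divides-orders} gives $d_v(P)\in N_{\delta,t}\N$.

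The second part is finite. It consists of the values $d_v(P)$ for $v\in U\cap S$, and each is a positive integer because $\mathcal G_U(k_v)$ is finite. Call this finite set $F$; this gives the inclusion $\mathscr R(G_q,P)\subseteq N_{\delta,t}\N\cup F$.

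For the densities, count directly:
\[
 \#\bigl(\mathscr R(G_q,P)\cap[1,Y]\bigr)\le\lfloor Y/N_{\delta,t}\rfloor+\#F.
\]
Dividing by $Y$ and letting $Y\to\infty$ gives $\overline d(\mathscr R(G_q,P))\le 1/N_{\delta,t}$. The complementary bound follows because $\underline d(\N\setminus\mathscr S)=1-\overline d(\mathscr S)$ for any $\mathscr S\subseteq\N$.

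The final sentence, that $N_{\delta,t}=M$ for the points $P_M$, refers to a construction in Theorem~\ref{thm:squarefree-M} that appears later. I would handle it there as a direct computation from Definition~\ref{def:Ndelta}. I expect that construction uses $\delta$ an isomorphism, so $e_\delta=1$, together with $t=\iota(\zeta_M)$ with $M$ squarefree, so every $a_\ell\le1$. Then $\rho_\ell=\lceil a_\ell/2\rceil=a_\ell$ for each $\ell$, and hence $N_{\delta,t}=M$. If the construction instead uses a $\delta$ with nontrivial kernel, the same computation goes through once one checks $\gcd(e_\delta,M)=1$, for instance via $\gcd(\deg\delta,M)=1$ as in Corollary~\ref{cor:coprime-obstruction}.

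The only real obstacle is this bookkeeping: the set $S$ produced by Theorem~\ref{thm:main} and the open set $U$ in the definition of $\mathscr R$ have to be reconciled. Once it is settled that changing finitely many places changes $\mathscr R$ only by finitely many integers, everything left is the counting estimate above.
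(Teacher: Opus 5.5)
Your proposal is correct and follows essentially the same route as the paper: you get universal divisibility by $N_{\delta,t}$ from Theorem~\ref{thm:main}(i) outside a finite set of places, observe that the exceptional places contribute only finitely many order values, and then use that $N_{\delta,t}\N$ has density $1/N_{\delta,t}$. Your computation $N_{\delta,t}=M$ from $e_\delta=1$ and squarefree $M$ is exactly the one the paper carries out in Corollary~\ref{cor:block-family}.
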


\begin{proof}
Outside the finite exceptional set in Theorem~\ref{thm:main}, every reduction order
is divisible by $N_{\delta,t}$; the exceptional places contribute only
finitely many order values.  The density estimates follow because
$N_{\delta,t}\N$ has density $1/N_{\delta,t}$.
\end{proof}

The return core is not only arithmetically large; the corresponding points are
geometrically dense and have unbounded projected height.

\begin{proposition}
\label{thm:gcd-height-decoupling}
Let $P$ satisfy the hypotheses of Theorem~\ref{thm:main}, assume
$N_{\delta,t}>1$, and let $Q$ be a return-core modulus from that theorem.  Then
\[
 \mathcal X_Q:=\{(1+kQ)P:k\ge0\}
\]
is Zariski dense in $G_q$, and
\[
 \dd_{\Ner}(X)=\dd_{\Ner}(P)
 \qquad(X\in\mathcal X_Q).
\]
For every symmetric ample line bundle $\mathcal L$ on $A$,
\[
 \widehat h_{\mathcal L}\bigl(\pi((1+kQ)P)\bigr)
 =(1+kQ)^2\widehat h_{\mathcal L}(\delta q)
 \longrightarrow\infty.
\]
For the explicit rational example, every ideal in the displayed dense family
is the unit ideal.
\end{proposition}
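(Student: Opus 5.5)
The statement has three parts: Zariski density of $\mathcal X_Q$, constancy of the denominator along it, and growth of the projected height. I will prove them in that order, each from results already in hand.

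\emph{Density.} Let $H$ be the Zariski closure of $\mathcal X_Q$, and let $H'$ be the Zariski closure of the subgroup generated by $\mathcal X_Q$. Since $QP=(1+Q)P-P$, this subgroup contains $\Z QP$, so $H'\supseteq\overline{\Z QP}^{\,\Zar}$. First I show that $\Z QP$ is dense in $G_q$. Its projection is $\Z Q\delta q=\Z\,\delta(Qq)$. Because $A$ is connected, the closure of $\Z\delta q$ equals the closure of $\Z Q\delta q$: the latter has finite index in the former, and a connected algebraic group cannot be a finite union of cosets of a proper closed subgroup. Hence $\Z\delta(Qq)$ is dense in $A$. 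Moreover $Qq$ is non-torsion, and $QP$ lies in $G_q(K)$, so Lemma~\ref{lem:dense-lift} applied to $QP$ gives that $\Z QP$ is dense in $G_q$.

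It remains to pass from the subgroup back to $\mathcal X_Q$ itself. The set $\mathcal X_Q=P+\{kQP:k\ge0\}$ is a translate of the positive semigroup $\N_0\,QP$. The closure of a positive cyclic semigroup in an algebraic group equals the closure of the full cyclic group. To see this, let $Z$ be the closure of $\{kQP:k\ge0\}$. Then $QP+Z\subseteq Z$, and this is an inclusion of closed sets of the same dimension with the same number of components. A Noetherian argument then forces $QP+Z=Z$, so $-QP+Z=Z$ as well, and $Z$ contains $\Z QP$. It follows that $H\supseteq P+Z=G_q$, so $\mathcal X_Q$ is dense. I expect this semigroup-to-group step to be the only point needing any care, and the translation-invariance argument above settles it.

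\emph{Denominators.} Every $n=1+kQ$ satisfies $(n,Q)=1$. Theorem~\ref{thm:main}(iii) therefore gives $\dd_{\Ner}((1+kQ)P)=\dd_{\Ner}(P)$ directly.

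\emph{Heights.} Since $\pi$ is a homomorphism and $\pi(P)=\pi(R)+\pi(t)=\delta q$, we get $\pi((1+kQ)P)=(1+kQ)\delta q$. For a symmetric $\mathcal L$ the N\'eron--Tate height is quadratic: $\widehat h_{\mathcal L}(nx)=n^2\widehat h_{\mathcal L}(x)$. It remains to check that $\widehat h_{\mathcal L}(\delta q)>0$. Density of $\Z\delta q$ with $\dim A>0$ forces $\delta q$ to be non-torsion. For ample symmetric $\mathcal L$ the canonical height vanishes exactly on torsion points, so $\widehat h_{\mathcal L}(\delta q)>0$ and the heights tend to infinity.

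\emph{Rational example.} The final sentence follows once the later explicit example is shown to have $\dd_{\Ner}(P)$ equal to the unit ideal, as announced in the introduction ($\mathfrak d_{\Ner}(P)=\Z$). I will cite that computation rather than redo it here.
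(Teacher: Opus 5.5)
Your proof is correct and follows the paper's argument. The denominators come from Theorem~\ref{thm:main}(iii), density from Lemma~\ref{lem:dense-lift} applied to $QP$ plus a Noetherian translation argument to pass from $\{kQP:k\ge0\}$ to $\Z QP$, the heights from quadraticity and positivity of the N\'eron--Tate height at the non-torsion point $\delta q$, and the rational example from \eqref{eq:explicit-unit-initial}. The one step to state more carefully is $QP+Z=Z$: it follows most cleanly, as in the paper, from stabilization of the descending chain $Z\supseteq QP+Z\supseteq 2QP+Z\supseteq\cdots$, or from matching components dimension by dimension using $QP+Z\cong Z$; equal dimension and equal number of components alone would not suffice.
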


\begin{proof}
The denominator equality follows from $(1+kQ,Q)=1$.  The point $QP$ has dense
projection $Q\delta q$ and hence, by Lemma~\ref{lem:dense-lift}, dense cyclic orbit
in $G_q$.

Let $C$ be the Zariski closure of $\{k(QP):k\ge0\}$.  Translation gives a
descending chain
\[
 C\supseteq QP+C\supseteq2QP+C\supseteq\cdots,
\]
which stabilizes by Noetherianity.  After translating, one obtains
$C=QP+C$, so $-QP\in C$.  Since $C$ is the Zariski closure of an
additive subsemigroup, it is itself closed under addition.  It therefore
contains every positive and negative multiple of $QP$, hence the full cyclic
subgroup $\mathbf Z(QP)$; by density, $C=G_q$.  Translating by $P$ proves the
density of $\mathcal X_Q$.  The height identity is the quadraticity of the
N\'eron--Tate height; it tends to infinity because the dense point $\delta q$
is non-torsion.  The final assertion uses
\eqref{eq:explicit-unit-initial}.
\end{proof}

\subsection{Primitive-divisor and densities}

Let
\[
 \mathcal P_{\mathrm{prim}}(P):=
 \{n\ge1:\dd_{\Ner}(nP)\text{ has a primitive finite-place divisor}\}
\]
and let
\[
 \mathcal Z(P):=\N\setminus\mathcal P_{\mathrm{prim}}(P)
\]
be the Zsigmondy set.  A place is primitive at index $n$ precisely when $n$ is
its first rank of apparition.

\begin{proposition}[Density of primitive-divisor indices]
\label{thm:primitive-density}
Under the hypotheses of Theorem~\ref{thm:main}, assume $N_{\delta,t}>1$.  Then
there is a finite set $F_{\mathrm{prim}}\subset\N$ such that
\[
 \mathcal P_{\mathrm{prim}}(P)
 \subseteq N_{\delta,t}\N\cup F_{\mathrm{prim}}.
\]
Consequently,
\[
 \overline d\bigl(\mathcal P_{\mathrm{prim}}(P)\bigr)
 \le\frac1{N_{\delta,t}},
 \qquad
 \underline d\bigl(\mathcal Z(P)\bigr)
 \ge1-\frac1{N_{\delta,t}}.
\]
For the points $P_M$ of Theorem~\ref{thm:squarefree-M} below, the
right-hand sides are $1/M$ and $1-1/M$, respectively.
\end{proposition}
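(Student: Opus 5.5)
The plan is to reduce everything to the universal divisibility in Theorem~\ref{thm:main}(i) together with the finiteness of the exceptional set $S$. First I would fix what ``primitive finite-place divisor'' of $\dd_{\Ner}(nP)$ means for every finite place $v$: a place with $v(\dd_{\Ner}(nP))>0$ and $v(\dd_{\Ner}(mP))=0$ for all $1\le m<n$. By Lemma~\ref{lem:ideal-divisibility}, the set $\{m:v(\dd_{\Ner}(mP))>0\}$ is closed under taking multiples, so a place $v$ is primitive at one index at most, namely its rank of apparition $r_v(P):=\min\{m:v(\dd_{\Ner}(mP))>0\}$ when this set is nonempty. Thus $\mathcal P_{\mathrm{prim}}(P)=\{r_v(P):v\text{ finite}\}$, where $v$ runs over the places meeting the identity at some multiple.

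Next I would split the places according to the finite set $S$ from Theorem~\ref{thm:main}, enlarging $S$ if needed so that $G_q$ has finite-type semiabelian reduction outside $S$ and Lemma~\ref{lem:reduction-criterion} applies. For $v\notin S$, Lemma~\ref{lem:reduction-criterion} gives $v(\dd_{\Ner}(mP))>0$ if and only if $d_v(P)\mid m$. Hence $r_v(P)=d_v(P)$, and by \eqref{eq:N-divides-orders} this lies in $N_{\delta,t}\N$. For the finitely many $v\in S$, each contributes at most one value $r_v(P)$, so I would set $F_{\mathrm{prim}}:=\{r_v(P):v\in S,\ r_v(P)<\infty\}$. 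This gives the inclusion. Equivalently, one can write $\mathcal P_{\mathrm{prim}}(P)\subseteq\mathscr R(G_q,P)\cup F_{\mathrm{prim}}$ and quote Proposition~\ref{prop:sparse-orders-general}.

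The density bounds follow immediately. A finite set has density zero, and $N_{\delta,t}\N$ has natural density $1/N_{\delta,t}$. This gives $\overline d(\mathcal P_{\mathrm{prim}})\le 1/N_{\delta,t}$, and the complementary lower density of $\mathcal Z(P)$ is at least $1-1/N_{\delta,t}$. The statement about $P_M$ is just the equality $N_{\delta,t}=M$ recorded for Theorem~\ref{thm:squarefree-M}, which I would cite forward.

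The only point needing care is at the places in $S$, in particular those of non-finite-type or bad reduction on the N\'eron lft-model. There the equivalence with reduction orders may fail, for instance when $\overline P_v$ lies in a non-identity component. The argument above avoids this completely: it uses only the single-index property from Lemma~\ref{lem:ideal-divisibility}, which holds on $\Ner$ at every place. I therefore expect no real obstacle; the main task is to state precisely that each exceptional place adds at most one index.
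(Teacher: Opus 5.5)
Your proposal is correct and is essentially the paper's proof: places outside $S$ can be primitive only at $d_v(P)\in N_{\delta,t}\N$, and each place of $S$ is primitive at most once. The paper phrases the second point via the order $o_v$ of $\overline P_v$ in the N\'eron special fiber (first appearance $o_v$ if finite, none if infinite), while you use uniqueness of the rank of apparition. Your caveat about the places in $S$ is slightly overstated, though it does not affect your argument. On $\Ner$ one still has $v(\dd_{\Ner}(nP))>0$ exactly when $[n]\overline P_v=e_v$ at every place, even when $\overline P_v$ lies in a non-identity component; what can fail at $v\in S$ is only the finiteness of that order.
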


\begin{proof}
Choose $S$ as in Theorem~\ref{thm:main}.  If $v\notin S$ is primitive at index $n$,
then $d_v(P)=n$, and \eqref{eq:N-divides-orders} gives
$N_{\delta,t}\mid n$.  A fixed place $v\in S$ can be primitive at most one
index: if its reduction has finite order $o_v$, its first appearance is
$o_v$, while a reduction of infinite order never appears.  Thus the places in
$S$ contribute only a finite set of exceptional indices.  The density bounds
follow.
\end{proof}

\begin{proposition}
\label{prop:high-density-support}
Under the hypotheses of Theorem~\ref{thm:main}, assume $N_{\delta,t}>1$.  Then
there is a finite set $S$ of places
such that
\[
 \supp\dd_{\Ner}(nP)\subseteq S
 \qquad\text{whenever }N_{\delta,t}\nmid n.
\]
The set of such indices has natural density
$1-1/N_{\delta,t}$.
\end{proposition}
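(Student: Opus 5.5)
The support statement will follow from part (ii) of Theorem~\ref{thm:main}, and the density statement from elementary counting. Take $S$ to be the finite set of places supplied by Theorem~\ref{thm:main}. Enlarging $S$ if needed, we may assume that $G_q$ extends to a finite-type semiabelian scheme over $U=\Spec\cO_{K,S}$. We may also assume that the divisibility \eqref{eq:N-divides-orders} holds for every $v\notin S$.

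Now fix $n$ with $N_{\delta,t}\nmid n$. Suppose some $v\notin S$ lies in $\supp\dd_{\Ner}(nP)$, that is, $v(\dd_{\Ner}(nP))>0$. Then Lemma~\ref{lem:reduction-criterion}, applied on $U$ (legitimate by Lemma~\ref{lem:model-comparison}), gives $d_v(P)\mid n$. Combined with $N_{\delta,t}\mid d_v(P)$, this yields $N_{\delta,t}\mid n$, a contradiction. Hence the support is contained in $S$. Equivalently, this is the localized statement \eqref{eq:localized-blanking}:
\[
 \dd_{\Ner}(nP)\cO_{K,S}=\cO_{K,S}
 \quad\Longleftrightarrow\quad
 \supp\dd_{\Ner}(nP)\subseteq S.
\]
The equivalence holds because a nonzero integral ideal becomes the unit ideal after inverting $S$ exactly when all its prime factors lie in $S$.

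For the density, the set $\{n\ge1:N_{\delta,t}\nmid n\}$ is the complement of $N_{\delta,t}\N$. Among $1,\ldots,Y$ it has $Y-\lfloor Y/N_{\delta,t}\rfloor$ elements, so its natural density is exactly $1-1/N_{\delta,t}$. This is positive since $N_{\delta,t}>1$.

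There is no substantial obstacle; the only point needing care is bookkeeping. The finite set $S$ must be chosen once and for all, independently of $n$, and on it both the finite-type model comparison and the divisibility \eqref{eq:N-divides-orders} must hold simultaneously. This is arranged by taking the union of the finitely many exceptional sets involved.
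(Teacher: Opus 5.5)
Your proposal is correct and matches the paper's proof. The paper likewise identifies the support statement with the localized blanking \eqref{eq:localized-blanking} of Theorem~\ref{thm:main}(ii), which you re-derive from $N_{\delta,t}\mid d_v(P)$ and Lemma~\ref{lem:reduction-criterion}, and reads off the density $1-1/N_{\delta,t}$ from the complement of $N_{\delta,t}\N$.
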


\begin{proof}
This is exactly \eqref{eq:localized-blanking}; the complement of
$N_{\delta,t}\N$ has the stated density.
\end{proof}

\begin{corollary}
\label{cor:extremal-density}
For every $\varepsilon>0$, there exist a number field $K$, a geometrically
nonsplit semiabelian threefold $G/K$, and a Zariski-dense point $X\in G(K)$
such that
\[
 \overline d\bigl(\mathscr R(G,X)\bigr)<\varepsilon,
 \qquad
 \overline d\bigl(\mathcal P_{\mathrm{prim}}(X)\bigr)<\varepsilon,
\]
\[
 \underline d\bigl(\mathcal Z(X)\bigr)>1-\varepsilon,
\]
and the denominator ideals are supported on one fixed finite set of places for
a set of indices of lower density $>1-\varepsilon$.
\end{corollary}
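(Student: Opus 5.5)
The plan is to apply Theorem~\ref{thm:main} to a point whose divisor $N_{\delta,t}$ is large, and then read off the densities from Propositions~\ref{prop:sparse-orders-general}, \ref{thm:primitive-density} and \ref{prop:high-density-support}. Given $\varepsilon>0$, fix an integer $M$ with $1/M<\varepsilon$. A squarefree choice, such as a product of primes, matches Theorem~\ref{thm:squarefree-M}, but any $M$ works for the bounds. Every target inequality then follows from $N_{\delta,t}\ge M$: the exact-order spectrum and the primitive-divisor index set both lie in $N_{\delta,t}\N\cup F$ with $F$ finite, so their upper densities are at most $1/N_{\delta,t}\le 1/M<\varepsilon$. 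The Zsigmondy set and the bounded-support index set $\{n:N_{\delta,t}\nmid n\}$ have lower density at least $1-1/M>1-\varepsilon$. The construction must also give a strict inequality for the support statement, and $1-1/M>1-\varepsilon$ does this.

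For the construction, I would take an elliptic curve $E/\Q$ of rank at least two with independent points $Q_1,Q_2$, for instance $E_{389}$, and set $A=E\times E$, a surface. On $E\times E$ I would use the antisymmetric homomorphism $\beta=\begin{pmatrix}0&1\\0&0\end{pmatrix}$, composed with the principal polarization. Then $\delta=\beta-\widehat\beta$ is, up to sign, $\begin{pmatrix}0&1\\-1&0\end{pmatrix}$. This $\delta$ is an isomorphism, so $e_\delta=1$.

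Next I would choose $q=(Q_1,Q_2)$. Then $\delta q=(Q_2,-Q_1)$, whose cyclic group is Zariski dense in $E\times E$: if $E$ has no CM, independence of $Q_1,Q_2$ over $\End(E)=\Z$ rules out every proper algebraic subgroup containing a multiple of the point. The resulting $G_q$ is a threefold. I would take $K=\Q(\zeta_{M^2})$ and $t=\iota(\zeta_{M^2})$, so $a_\ell=2v_\ell(M)$ and $\rho_\ell=v_\ell(M)$. This gives $N_{\delta,t}=M$. Theorem~\ref{thm:main} then shows that $G_q$ is geometrically nonsplit, and Lemma~\ref{lem:dense-lift} shows that $X=R_\beta(q)+t$ is Zariski dense.

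The main obstacle is verifying density and nondegeneracy after the base change to $K$. The point $\delta q$ must remain dense over $K$, and this holds because Zariski closure commutes with field extension. The extension class $q$ must remain non-torsion, which it does. The choice of $\beta$ also has to be a genuine homomorphism $A^\vee\to A$ with $\delta$ an isogeny, and I would check this by an explicit biduality computation on $E\times E$. The remaining steps are direct citations of the propositions above.
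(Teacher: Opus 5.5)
Your proposal is correct and follows the paper's proof. You use the same $389\mathrm a1$ symplectic datum on $E\times E$ with $e_\delta=1$, pass to a cyclotomic base change to get $N_{\delta,t}=M$, and then read off the densities from Propositions~\ref{prop:sparse-orders-general}, \ref{thm:primitive-density} and~\ref{prop:high-density-support}. The only difference is the torsion shift: you translate by $\zeta_{M^2}$, which gives $N_{\delta,t}=M$ for arbitrary $M$, whereas the paper takes $M$ squarefree and uses the point $P_M$ of Theorem~\ref{thm:squarefree-M} shifted by $\zeta_M$. In either version you should take $M\ge2$, not merely $1/M<\varepsilon$, so that $N_{\delta,t}>1$ as those propositions require.
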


\begin{proof}
Lemmas~\ref{lem:E389-data} and~\ref{lem:q-dense} below furnish a
symplectic datum over $\Q$ with $\dim B=1$.  Choose a squarefree integer
$M>\max\{1,\varepsilon^{-1}\}$, put $K=\Q(\mu_M)$, and let $X=P_M$ be the
point supplied by Theorem~\ref{thm:squarefree-M} for this datum.  The
corresponding semiabelian extension has dimension
$2\dim B+1=3$.  Its translating torsion point has order $M$; since
$e_\delta=1$ and $M$ is squarefree, one has $N_{\delta,t}=M$.  The asserted
inequalities now follow from
Propositions~\ref{prop:sparse-orders-general},
\ref{thm:primitive-density}, and~\ref{prop:high-density-support}.
\end{proof}

Thus there is no positive lower bound depending only on the dimension for the
density of exact reduction orders or primitive-divisor indices in the class of
geometrically nonsplit semiabelian varieties over varying number fields.

\subsection{Exact reduction orders in the isomorphism case}
\label{sec:exact}

When $\delta:A^\vee\to A$ is an isomorphism, the universal divisibility
theorem admits a complete local refinement.  The quotient order and the
remaining toric factor can then be separated exactly.

\begin{proposition}[Exact Ribet-shift order formula]\label{thm:exact-order}
Assume that $\delta:A^\vee\to A$ is an isomorphism.  Outside a finite set of places, put
\[
 m_v:=\ord(\overline q_v)
     =\ord(\delta\overline q_v)
\]
and define the Weil-pairing factor
\begin{equation}\label{eq:omega-pairing}
 \omega_v:=[m_v]\overline R_v
 =e_{m_v}^A(\beta\overline q_v,\overline q_v)
 \in\mu_{m_v}(k_v).
\end{equation}
Then
\begin{equation}\label{eq:exact-order-formula}
 d_v(P)
 =m_v\,
 \ord_{k_v^\times}\!\left(\omega_v\overline\zeta_v^{\,m_v}\right).
\end{equation}
In particular, $\ord(\omega_v)\mid m_v$.
\end{proposition}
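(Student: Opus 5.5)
We compute the order of $\overline P_v=\overline R_v+\overline t_v$ in $\mathcal G_U(k_v)$ in two stages: first determine the part of the order forced by the projection, then analyze the toric remainder. We work outside the finite set $S$ of \cref{thm:main}, enlarged so that $\delta$ spreads out to an isomorphism of abelian schemes over $U$, the normalized Ribet section commutes with reduction (\cref{cor:ribet-reduction}), and $\zeta$ keeps its exact order in each residue field.

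\emph{Projection.} Since $\delta$ is an isomorphism on the special fibers, $\ord(\delta\overline q_v)=\ord(\overline q_v)=m_v$. The projection of $\overline P_v$ is $\pi(\overline R_v)=\delta\overline q_v$, because $\overline t_v$ lies in the toric kernel. Hence any $n$ with $[n]\overline P_v=0$ satisfies $m_v\mid n$. In particular $m_v\mid d_v(P)$, and we may write $n=m_v k$.

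\emph{Toric remainder.} For $n=m_vk$ we have
\[
 [m_vk]\overline P_v=[k]\bigl([m_v]\overline R_v+[m_v]\overline t_v\bigr)
 =\bigl(\omega_v\,\overline\zeta_v^{\,m_v}\bigr)^k\in k_v^\times\subseteq\mathcal G_U(k_v).
\]
Here we use that $\iota$ is an injective homomorphism, so that $[m_v]\overline t_v=\iota(\overline\zeta_v^{\,m_v})$. We also use \cref{cor:ribet-reduction} to get $[m_v]\overline R_v=e^A_{m_v}(\beta\overline q_v,\overline q_v)=\omega_v$, an element of the toric kernel, with the multiplicative notation for $\Gm$ as fixed at the start of the paper. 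Therefore $[m_vk]\overline P_v=0$ holds exactly when $\ord_{k_v^\times}(\omega_v\overline\zeta_v^{\,m_v})\mid k$. Since $d_v(P)$ is the least such $n$, this gives \eqref{eq:exact-order-formula}.

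\emph{Last claim.} We have $\omega_v\in\mu_{m_v}(k_v)$, which is immediate from \eqref{eq:ribet-mu} read scheme-theoretically. Its order therefore divides $m_v$. When $\operatorname{char}k_v\mid m_v$, the group $\mu_{m_v}(k_v)$ is just the $m_v$-torsion of $k_v^\times$, so the conclusion still holds.

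\emph{Main obstacle.} The only delicate point is the identification $[m_v]\overline R_v=\omega_v$ as an element of $k_v^\times$ via $\iota$. This needs the spread-out normalized section and Poincaré pairing convention to be compatible with the toric embedding $\iota$ of $G_q$ over $U$, uniformly in the residue characteristic. \cref{cor:ribet-reduction} provides exactly this, so the argument reduces to checking the bookkeeping of signs and conventions; the rest is elementary group theory.
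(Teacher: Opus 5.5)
Your proposal is correct and follows essentially the same route as the paper. The paper likewise reads $[m_v]\overline R_v=\omega_v$ off Corollary~\ref{cor:ribet-reduction}, uses that the projection of $\overline P_v$ has exact order $m_v$, and applies $\ord(x)=m\,\ord([m]x)$ to get $[m_v]\overline P_v=\omega_v\overline\zeta_v^{\,m_v}$. Your computation of $[m_vk]\overline P_v$ simply unpacks that identity.
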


\begin{proof}
The identity \eqref{eq:omega-pairing} is
Corollary~\ref{cor:ribet-reduction}.  The image of $\overline P_v$ in the abelian
quotient has exact order $m_v$.  For an element $x$ of an abelian group whose
image in a quotient has exact order $m$, one has
\[
 \ord(x)=m\,\ord([m]x).
\]
Applying this to $x=\overline P_v$ and using multiplicative notation in the
toric kernel gives
\[
 [m_v]\overline P_v
 =\omega_v\overline\zeta_v^{\,m_v},
\]
which proves \eqref{eq:exact-order-formula}.
\end{proof}

At places where the projected order is coprime to the shift order, the toric
terms have coprime orders and the formula factors numerically.

\begin{corollary}
\label{cor:exact-coprime}
Let $h=\ord(t)$.  At every place covered by Proposition~\ref{thm:exact-order} for which
$\gcd(m_v,h)=1$, one has
\begin{equation}\label{eq:exact-coprime-formula}
 d_v(P)=h\,m_v\,\ord(\omega_v).
\end{equation}
\end{corollary}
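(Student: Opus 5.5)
We start from the exact formula \eqref{eq:exact-order-formula} of Proposition~\ref{thm:exact-order}, valid at every place it covers:
\[
 d_v(P)=m_v\,\ord_{k_v^\times}\!\bigl(\omega_v\overline\zeta_v^{\,m_v}\bigr).
\]
The task is to evaluate the toric order $\ord(\omega_v\overline\zeta_v^{\,m_v})$ when $\gcd(m_v,h)=1$. It is the product of two commuting elements of $k_v^\times$, and we plan to apply Lemma~\ref{lem:coprime-orders} to them.

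First, $\omega_v\in\mu_{m_v}(k_v)$, so $\ord(\omega_v)\mid m_v$; this is the last sentence of Proposition~\ref{thm:exact-order}. Second, we use that outside the finite set already excluded in the proof of Theorem~\ref{thm:main}, the reduction $\overline t_v=\overline\zeta_v$ retains exact order $h$. Since $\gcd(m_v,h)=1$, raising to the power $m_v$ is an automorphism of the cyclic group $\langle\overline\zeta_v\rangle$ of order $h$, so $\overline\zeta_v^{\,m_v}$ still has exact order $h$.

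Now $\ord(\omega_v)$ divides $m_v$, which is coprime to $h$, so $\gcd(\ord(\omega_v),h)=1$. Lemma~\ref{lem:coprime-orders} then gives
\[
 \ord\bigl(\omega_v\overline\zeta_v^{\,m_v}\bigr)=h\,\ord(\omega_v).
\]
Substituting this into the exact-order formula yields \eqref{eq:exact-coprime-formula}.

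The only delicate point is bookkeeping of the exceptional set. The finite set of places in Proposition~\ref{thm:exact-order} must be taken large enough that $t$ keeps exact order $h$ in the fibers, which can be arranged by enlarging it as in Theorem~\ref{thm:main}. Everything else is elementary group theory.
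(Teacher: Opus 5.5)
Your proposal is correct and follows the paper's proof: both start from \eqref{eq:exact-order-formula}, use $\ord(\omega_v)\mid m_v$ and the fact that $\overline\zeta_v^{\,m_v}$ has exact order $h$, and conclude with Lemma~\ref{lem:coprime-orders}. Your remark about the exceptional set is worthwhile: the places where $\overline\zeta_v$ loses its order must be excluded, as the place $p=2$ with $d_2(P)=5$ in Theorem~\ref{thm:explicit-Q} shows, and this is the same convention the paper uses implicitly via the spreading-out in Theorem~\ref{thm:main}.
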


\begin{proof}
The element $\omega_v$ has order dividing $m_v$, whereas
$\overline\zeta_v^{m_v}$ has exact order $h$.  Their orders are coprime, so
Lemma~\ref{lem:coprime-orders} and \eqref{eq:exact-order-formula} give the result.
\end{proof}

Formula \eqref{eq:exact-order-formula} separates three pieces of local data:
the order of the projected point, the Weil-pairing contribution of the
normalized Ribet lift, and the fixed toric translation.  A genuine numerical
factorization occurs in Corollary~\ref{cor:exact-coprime}, where the relevant orders are
coprime.

\subsection{A universal symplectic block}\label{sec:block}

The condition that $\beta-\widehat\beta$ be an isomorphism imposes no special
endomorphism hypothesis after doubling a positive-dimensional abelian variety.

\begin{proposition}[Symplectic block]\label{prop:block}
Let $B/K$ be a positive-dimensional abelian variety and put
\[
 A=B\times B^\vee,
 \qquad A^\vee=B^\vee\times B.
\]
Define
\[
 \beta:A^\vee\longrightarrow A,
 \qquad \beta(u,v)=(v,0).
\]
Then
\[
 \widehat\beta(u,v)=(0,u),
 \qquad
 \delta(u,v)=(v,-u).
\]
In particular, $\delta:A^\vee\to A$ is an isomorphism whose inverse
$\delta^{-1}:A\to A^\vee$ is given by
$\delta^{-1}(x,y)=(-y,x)$.
\end{proposition}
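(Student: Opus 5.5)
The plan is to compute $\widehat\beta$ directly from the definition of the dual homomorphism under biduality, and then read off $\delta$ and its inverse. The only real input is the identification $(B\times B^\vee)^\vee\simeq B^\vee\times B^{\vee\vee}\simeq B^\vee\times B$. I would use the convention that the dual of a product is the product of the duals, with the Poincaré bundle $\cP_{B\times B^\vee}$ being the external tensor product of $\cP_B$ and $\cP_{B^\vee}$. Biduality $B\simeq B^{\vee\vee}$ would be normalized so that $\cP_{B^\vee}$ corresponds to the transpose of $\cP_B$. Under these identifications, a homomorphism $A^\vee\to A$ is a $2\times2$ matrix of homomorphisms, and dualization acts as transpose combined with dualization of the entries.

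Concretely, $\beta$ has matrix $\begin{pmatrix}0&\id_{B^\vee}\\0&0\end{pmatrix}$ with respect to $A^\vee=B^\vee\times B$ and $A=B\times B^\vee$. Its only nonzero entry is the map $B\to B$ sending the second coordinate $v\in B$ to the first coordinate of $A$. I would check this through the characterizing pairing identity $\cP_A(\beta x,y)\simeq\cP_A(\widehat\beta y,x)$ for $x,y\in A^\vee$, which is the identity used in the proof of Proposition~\ref{prop:ribet-point}. Write $x=(u,v)$ and $y=(u',v')$. The left side is $\cP_A((v,0),(u',v'))=\cP_B(v,u')$. For the right side to equal $\cP_B(v,u')$, the point $\widehat\beta(u',v')$ must have first coordinate $0$ and second coordinate $u'$, because $\cP_A((0,u'),(u,v))=\cP_{B^\vee}(u',v)=\cP_B(v,u')$ by the transpose normalization. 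Hence $\widehat\beta(u,v)=(0,u)$. Subtracting gives $\delta(u,v)=(v,0)-(0,u)=(v,-u)$.

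Finally, I would verify the inverse by direct substitution. We have $\delta^{-1}(\delta(u,v))=\delta^{-1}(v,-u)=(u,v)$, and in the other order $\delta(\delta^{-1}(x,y))=\delta(-y,x)=(x,y)$. Both maps are homomorphisms of abelian varieties, so $\delta$ is an isomorphism.

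The main obstacle is the sign and transpose bookkeeping in biduality, since the canonical isomorphism $B\to B^{\vee\vee}$ can differ by a sign between conventions. I would handle this by fixing the convention so that $\widehat{\widehat\beta}=\beta$ and $\cP_{B^\vee}$ is the transpose of $\cP_B$, in line with the convention adopted in Proposition~\ref{prop:ribet-point}. I would also point out that a sign change would only replace $\delta$ by $(v,u)$ or $(-v,-u)$ up to sign, which is still an isomorphism. So the conclusion that $\delta$ is an isomorphism does not depend on the convention, and only the displayed formulas do.
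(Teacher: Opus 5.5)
Your proof is correct and takes the same route as the paper, which simply observes that product duality transposes the block matrix $\left(\begin{smallmatrix}0&1\\0&0\end{smallmatrix}\right)$. Your check through $\cP_A(\beta x,y)\simeq\cP_A(\widehat\beta y,x)$ just makes that transpose rule explicit. One small slip: the nonzero entry of $\beta$ is $\id_B$, not $\id_{B^\vee}$, as your own next sentence, describing a map $B\to B$, indicates.
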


\begin{proof}
Under product duality, dualizing the block matrix
$\left(\begin{smallmatrix}0&1\\0&0\end{smallmatrix}\right)$ gives its
transposed dual
$\left(\begin{smallmatrix}0&0\\1&0\end{smallmatrix}\right)$.
The formulas follow immediately.
\end{proof}

Combining the block with a torsion translation gives a flexible family to
which the main theorem applies directly.

\begin{corollary}\label{cor:block-family}
Let $q=(q_1,q_2)\in B^\vee(K)\times B(K)$ have Zariski-dense cyclic orbit in
$A^\vee$, and let $G_q$ and $R_\beta(q)$ be attached to the block map in
Proposition~\ref{prop:block}.  If $\zeta\in K^\times$ is a root of unity of squarefree
order $M>1$, then
\[
 P=R_\beta(q)+\iota(\zeta)
\]
has Zariski-dense cyclic orbit and
\[
 M\mid d_v(P)
\]
outside finitely many places.  Moreover, there is a squarefree integer $Q_P$
with $M\mid Q_P$ such that
\[
 (n,Q_P)=1
 \quad\Longrightarrow\quad
 \dd_{\Ner}(nP)=\dd_{\Ner}(P).
\]
Thus all rational primes outside a fixed finite set, together with all their
positive powers, are full return indices and missing exact reduction orders.
\end{corollary}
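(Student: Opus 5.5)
The plan is to deduce Corollary~\ref{cor:block-family} directly from Theorem~\ref{thm:main}, with $A=B\times B^\vee$ and the block map $\beta$ of Proposition~\ref{prop:block}. By that proposition, $\delta=\beta-\widehat\beta$ is an isomorphism. Hence $\ker\delta=0$, the exponent is $e_\delta=1$, and $v_\ell(e_\delta)=0$ for every $\ell$. The shift is $t=\iota(\zeta)$ with $\ord(t)=M$ squarefree, so $a_\ell\in\{0,1\}$. Then
\[
 \rho_\ell(\delta,t)=\max\{0,\lceil a_\ell/2\rceil\}=a_\ell,
\]
so $N_{\delta,t}=M>1$. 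This numerical step is routine.

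The remaining hypothesis of Theorem~\ref{thm:main} is that $\Z(\delta q)$ is Zariski dense in $A$. I would get this by transporting density along the isomorphism $\delta:A^\vee\to A$. An isomorphism of varieties is a homeomorphism in the Zariski topology and is a group homomorphism. It therefore carries $\overline{\Z q}^{\,\Zar}=A^\vee$ onto $\overline{\Z\,\delta q}^{\,\Zar}=A$. Here I am reading ``Zariski-dense cyclic orbit in $A^\vee$'' for $q=(q_1,q_2)$ as density of $\Z q$ in $A^\vee=B^\vee\times B$. The ordering of factors in $A^\vee$ versus $A$ is harmless, since $\delta(u,v)=(v,-u)$ is an explicit isomorphism.

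With the hypotheses checked, Theorem~\ref{thm:main} gives the conclusions in order:
\begin{itemize}
\item the geometric part gives that $G_q$ is geometrically nonsplit, $P$ is non-torsion, and $\Z P$ is Zariski dense;
\item part (i) gives $M=N_{\delta,t}\mid d_v(P)$ outside a finite set $S$;
\item part (iii) gives a squarefree $Q_P=Q$ with $\rad(M)=M\mid Q_P$ and the coprime return property;
\item part (iv) gives that every prime $r\nmid Q_P$ and every power $r^a$ is a full return index and lies outside $\mathscr R(G_q,P)$, the latter via Proposition~\ref{prop:intro-return-excludes-order}.
\end{itemize}

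I expect no real obstacle. The only point needing care is the density transfer: the hypothesis is stated on $A^\vee$, while the theorem needs density of $\delta q$ in $A$, so I will state the isomorphism argument explicitly rather than appeal to the remark preceding Theorem~\ref{thm:main}, which goes in the opposite direction.
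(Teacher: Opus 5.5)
Your proposal is correct and follows the paper's proof: from $e_\delta=1$ and $M$ squarefree you get $\rho_\ell(\delta,t)=1$ for every $\ell\mid M$, hence $N_{\delta,t}=M$, and Theorem~\ref{thm:main} applies. Your explicit transfer of density from $\mathbf Z q$ to $\mathbf Z(\delta q)$ via the isomorphism $\delta$ is a valid check of a hypothesis that the paper leaves implicit.
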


\begin{proof}
Here $e_\delta=1$.  For every $\ell\mid M$, one has $a_\ell=1$ and hence
$\rho_\ell(\delta,t)=1$.  Thus $N_{\delta,t}=M$, and Theorem~\ref{thm:main}
applies.
\end{proof}

For the remainder of this subsection, fix the following \emph{symplectic
datum}: a number field $K_0$, a positive-dimensional abelian variety $B/K_0$,
the block map $\beta$ of \cref{prop:block} on
$A=B\times B^\vee$, and a point
\[
        q\in A^\vee(K_0)
\]
whose cyclic subgroup is Zariski dense.  Write $G_q$ for the associated
extension and $R=R_\beta(q)$.  The extension class $q$ is non-torsion, so
$G_q$ is geometrically nonsplit, and $R$ has dense cyclic orbit.  After every
finite base extension we retain the same notation for the base-changed data.

The unshifted and shifted Ribet points lie on this fixed extension and have the
same abelian projection.  Nevertheless their eventual reduction divisors are
radically different.

\begin{theorem}
\label{thm:squarefree-M}
Let $M>1$ be squarefree and put $K_M=K_0(\mu_M)$.  Let
\[
 R=R_\beta(q),
 \qquad
 P_M=R+\iota(\zeta_M),
\]
where $\zeta_M$ is primitive of order $M$.  Then:
\begin{enumerate}[label=\textup{(\roman*)}]
\item $G_q/K_M$ is geometrically nonsplit and both $\mathbf ZR$ and
      $\mathbf ZP_M$ are Zariski dense;
\item the two points have the same projection $\delta q$, but
\begin{equation}\label{eq:torsion-sensitivity}
 U(R)=1,
 \qquad
 U(P_M)=M;
\end{equation}
\item there is a squarefree integer $Q_M$, divisible by $M$, such that
\[
 (n,Q_M)=1
 \quad\Longrightarrow\quad
 \dd_{\Ner}(nP_M)=\dd_{\Ner}(P_M).
\]
\end{enumerate}
\end{theorem}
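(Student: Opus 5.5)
The plan is to reduce everything to results already in hand: the block construction of Proposition~\ref{prop:block}, Corollary~\ref{cor:block-family}, the exact order formula of Proposition~\ref{thm:exact-order} with Corollary~\ref{cor:exact-coprime}, and Perucca's theorem on the abelian variety $A^\vee$, used in the same way as in Corollary~\ref{cor:torus-no-finite-multiples}.

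\emph{Part (i).} First I would check that the hypotheses survive the base change to $K_M$. The Zariski closure of $\mathbf Z q$ commutes with extension of the ground field, so $\mathbf Z q$ stays dense in $A^\vee_{K_M}$. Since $\delta$ is an isomorphism, $\mathbf Z(\delta q)$ is then dense in $A_{K_M}$. Theorem~\ref{thm:main}, applied over $K_M$ with $t=0$ and with $t=\iota(\zeta_M)$, gives geometric nonsplitting and density of both $\mathbf ZR$ and $\mathbf ZP_M$. This part does not need $N_{\delta,t}>1$.

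\emph{Part (iii) and the lower bound in (ii).} Both come from Corollary~\ref{cor:block-family}. There $e_\delta=1$ and $N_{\delta,t}=M$, so $M\mid d_v(P_M)$ for almost all $v$, which gives $u_\ell(P_M)\ge1$ for every $\ell\mid M$. For $R$ the lower bound $u_\ell(R)\ge0$ is trivial.

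\emph{Upper bounds in (ii).} Fix a rational prime $\ell$ and let $\Sigma$ be the set of primes dividing $\ell M$. I would apply Perucca's Main Theorem~1 \cite{Perucca2009} over $K_M$ to the point $q\in A^\vee(K_M)$, exactly as in the proof of Corollary~\ref{cor:torus-no-finite-multiples}. The orbit closure of $q$ is the connected variety $A^\vee$, so the component number is $1$. This gives a set of places of positive Dirichlet density on which $m_v=\ord(\overline q_v)$ is prime to every element of $\Sigma$. I would discard the finitely many places excluded by Proposition~\ref{thm:exact-order}; the remaining set is still infinite. At each remaining place, $\gcd(m_v,M)=1$, so Corollary~\ref{cor:exact-coprime} gives
\[
 d_v(P_M)=M\,m_v\,\ord(\omega_v).
\]
The same proposition with $\zeta=1$ gives
\[
 d_v(R)=m_v\,\ord(\omega_v).
\]
Since $\ord(\omega_v)\mid m_v$ and $m_v$ is prime to $\ell M$, we get $v_\ell(d_v(P_M))=v_\ell(M)$ and $v_\ell(d_v(R))=0$ on an infinite set of places. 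Hence $u_\ell(P_M)\le v_\ell(M)$ and $u_\ell(R)=0$ for every $\ell$. Combined with the lower bound, this shows that all $u_\ell$ are finite and almost all vanish, so $U(R)=1$ and $U(P_M)=M$.

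\emph{Main obstacle.} The step needing most care is the Perucca input. I have to make sure it applies to an abelian variety over the extended field $K_M$, with reduction orders read in $\mathcal A^\vee_U(k_v)$, and that its conclusion holds for all $\ell\in\Sigma$ simultaneously, not just one prime at a time. If only a single-prime version were available, I would instead apply it to the point $Mq$ and use the identity $v_{\ell'}(\ord(\overline{Mq}_v))=v_{\ell'}(m_v)-v_{\ell'}(M)$, suitably truncated. I expect Perucca's multi-prime statement with auxiliary integer $m=1$ to suffice directly, as it did for tori in Corollary~\ref{cor:torus-no-finite-multiples}.
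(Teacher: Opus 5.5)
Your proposal is correct and follows essentially the same route as the paper. You get density from the dense-projection argument (via Theorem~\ref{thm:main}), divisibility and the return core from Corollary~\ref{cor:block-family}, and the exact values $U(R)=1$, $U(P_M)=M$ from Perucca's theorem applied to $q$ together with Proposition~\ref{thm:exact-order} and Corollary~\ref{cor:exact-coprime}; the paper likewise prescribes valuation zero at all primes dividing $Mr$. Your worry about needing the simultaneous multi-prime form of Perucca is unnecessary: control of the single valuation $v_\ell(m_v)=0$ already suffices, because $\omega_v$ then has $\ell$-prime order, so the $\ell$-part of $\omega_v\overline\zeta_v^{\,m_v}$ equals the $\ell$-part of $\overline\zeta_v^{\,m_v}$.
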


\begin{proof}
The extension class $q$ is non-torsion and both points project to the dense
point $\delta q$, so Lemma~\ref{lem:dense-lift} proves (i).  Since $\delta$ is an
isomorphism and $M$ is squarefree, Corollary~\ref{cor:block-family} gives universal
divisibility by $M$ and the return-core assertion.

For exactness of $U(P_M)$, apply Perucca's theorem to $q$ and prescribe
valuation zero at every prime dividing $M$.  On a set of places of positive
Dirichlet density, $\gcd(m_v,M)=1$, where
$m_v=\ord(\overline q_v)$.  Since $\ord(\omega_v)\mid m_v$,
Corollary~\ref{cor:exact-coprime} gives
\[
 d_v(P_M)=M\,m_v\ord(\omega_v).
\]
Every prime divisor of $M$ therefore occurs with exact exponent one.  If
$r\nmid M$ is prime, prescribe valuation zero at all primes dividing $Mr$;
on a set of positive Dirichlet density the same formula gives $r\nmid d_v(P_M)$.  Hence
$U(P_M)=M$.

For the unshifted point, Proposition~\ref{thm:exact-order} gives
\[
 d_v(R)=m_v\ord(\omega_v),
 \qquad \ord(\omega_v)\mid m_v.
\]
For every rational prime $\ell$, Perucca's theorem supplies a set of positive Dirichlet density on which
$v_\ell(m_v)=0$, and then $\ell\nmid d_v(R)$.  Thus no rational prime
divides almost every reduction order of $R$, so $U(R)=1$.
\end{proof}

\begin{corollary}
\label{cor:arbitrary-discrepancy}
The ratio
\[
 \frac{U(X)}{\#\pi_0(\overline{\mathbf ZX}^{\,\Zar})}
\]
is unbounded among Zariski-dense points on geometrically nonsplit semiabelian varieties of dimension $2\dim B+1$. 
\end{corollary}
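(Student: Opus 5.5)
The plan is to realize unbounded ratios inside the symplectic datum fixed above, and to read off both the numerator and the denominator from Theorem~\ref{thm:squarefree-M}. Fix the datum $(K_0,B,\beta,q)$. For instance, the $389\mathrm a1$ datum of Lemmas~\ref{lem:E389-data} and~\ref{lem:q-dense} gives $\dim B=1$, but any datum will do. For each squarefree $M>1$, pass to $K_M=K_0(\mu_M)$ and take $X=P_M=R_\beta(q)+\iota(\zeta_M)$ on $G_q/K_M$. This variety has dimension $\dim A+1=2\dim B+1$, and it is geometrically nonsplit by Theorem~\ref{thm:squarefree-M}(i).

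For the denominator, Theorem~\ref{thm:squarefree-M}(i) says that $\mathbf ZP_M$ is Zariski dense in $G_q$. Its orbit closure is therefore all of $G_q$, which is connected as an extension of the connected $A$ by the connected $\Gm$. Hence $\#\pi_0(\overline{\mathbf ZP_M}^{\,\Zar})=1$. If the ratio is meant with the closure taken geometrically, the same holds: Lemma~\ref{lem:dense-lift} applies over $\overline K$ because $q_{\overline K}$ is non-torsion and $\delta q$ remains geometrically dense. This requires the density hypothesis on $q$ to be geometric, which I will take as part of the datum.

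For the numerator, Theorem~\ref{thm:squarefree-M}(ii) gives $U(P_M)=M$. The ratio is then exactly $M$. Since $M$ ranges over all squarefree integers, for example primes, the ratio is unbounded.

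The only point needing care is that the family varies the number field $K_M$. The statement allows this, since it quantifies over geometrically nonsplit semiabelian varieties of the given dimension without fixing the base field. I expect the main (mild) obstacle to be confirming that the Perucca input in Theorem~\ref{thm:squarefree-M} applies to $q$ after base change to $K_M$. That step needs $\mathbf Zq$ to stay dense after base change, which holds if the density is geometric. If the datum only gives density over $K_0$, I would first replace $q$ by a suitable multiple to make its orbit closure geometrically connected.
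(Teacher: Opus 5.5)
Your proposal is correct and is the paper's argument. The point $P_M$ of Theorem~\ref{thm:squarefree-M} has orbit closure equal to the connected group $G_q$, of dimension $2\dim B+1$, and it has $U(P_M)=M$, so the ratio equals $M$ for every squarefree $M>1$. Your hedging about geometric density is unnecessary: the Zariski closure of a set of $K_0$-rational points does not change under extension of the base field, so density of $\mathbf Zq$ over $K_0$ already holds over $K_M$ and over $\overline K$, and you need not replace $q$ by a multiple.
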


\begin{proof}
In Theorem~\ref{thm:squarefree-M}, the orbit closure is the connected group $G_q$ and
$U(P_M)=M$.  
\end{proof}
Squarefree integers are unbounded, and the construction is the
base change of the same extension over $K_0$. Hence the ambient geometric extension and the abelian projection
may be kept fixed while the numerator ranges over all squarefree integers
after cyclotomic base change. The exact order formula also gives simultaneous local sharpness.

\begin{proposition}
\label{thm:simultaneous-valuations}
Let $M>1$ be squarefree and let $\Sigma$ be any finite set of rational primes
containing every prime divisor of $M$.  There is a set of finite places of
$K_M$ of positive Dirichlet density such that
\[
 v_\ell(d_v(P_M))=
 \begin{cases}
 1,&\ell\mid M,\\
 0,&\ell\in\Sigma,\ \ell\nmid M.
 \end{cases}
\]
\end{proposition}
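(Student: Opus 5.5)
The plan is to run the argument from the proof of Theorem~\ref{thm:squarefree-M} once, simultaneously for all primes in $\Sigma$. We work outside the finite exceptional set of Proposition~\ref{thm:exact-order}, enlarged if necessary so that $\overline\zeta_{M,v}$ keeps exact order $M$ in every residue field.

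At such a place, suppose that $\gcd(m_v,M)=1$ and, more strongly, that $v_\ell(m_v)=0$ for every $\ell\in\Sigma$, where $m_v=\ord(\overline q_v)$. Corollary~\ref{cor:exact-coprime} applies with $h=M$ and gives
\[
 d_v(P_M)=M\,m_v\,\ord(\omega_v).
\]
Since $\ord(\omega_v)\mid m_v$, the product $m_v\ord(\omega_v)$ divides $m_v^2$. It is therefore prime to every $\ell\in\Sigma$. Because $M$ is squarefree, this yields $v_\ell(d_v(P_M))=1$ for $\ell\mid M$ and $v_\ell(d_v(P_M))=0$ for $\ell\in\Sigma$ with $\ell\nmid M$, which is the displayed formula.

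It remains to produce a positive-Dirichlet-density set of places of $K_M$ with $v_\ell(m_v)=0$ for all $\ell\in\Sigma$. For this we apply Perucca's theorem \cite[Main Theorem~1]{Perucca2009} to the point $q\in A^\vee(K_M)$, exactly as in Corollary~\ref{cor:torus-no-finite-multiples} and the proof of Theorem~\ref{thm:squarefree-M}. We prescribe valuation zero at every $\ell\in\Sigma$ at once, with auxiliary integer $m=1$. The cyclic orbit of $q$ is Zariski dense in $A^\vee$ and remains so after base change to $K_M$, since density of $\mathbf Zq$ is a geometric condition. Hence the orbit closure is connected, the component number is $n_q=1$, and the prescription is admissible. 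Perucca's theorem then supplies a set of places of positive Dirichlet density on which $\ord(\overline q_v)$ is prime to $\prod_{\ell\in\Sigma}\ell$. Removing the finitely many exceptional places does not change the density.

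The step needing care is the simultaneous prescription: Perucca's theorem must be applied with all $\ell\in\Sigma$ constrained at once, over the field $K_M$, to the abelian variety $A^\vee$. The key point to verify is that $n_q=1$ persists after the cyclotomic base change. This follows from geometric density of $\mathbf Zq$, as already used in Theorem~\ref{thm:squarefree-M}, which there handled the set $\{\ell\mid Mr\}$ in the same way. Everything else is the exact order formula of Proposition~\ref{thm:exact-order}.
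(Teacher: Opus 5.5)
Your proposal is correct and is essentially the paper's proof. You apply Perucca's theorem to $q$ with valuation zero prescribed at every $\ell\in\Sigma$, and then read off the valuations from $d_v(P_M)=M\,m_v\ord(\omega_v)$, $\ord(\omega_v)\mid m_v$, via Corollary~\ref{cor:exact-coprime}.
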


\begin{proof}
Apply Perucca's theorem to the dense point $q$ and prescribe
$v_\ell(m_v)=0$ for every $\ell\in\Sigma$.  The resulting set of places has
positive Dirichlet density.  There $\gcd(m_v,M)=1$, and
\[
 d_v(P_M)=M\,m_v\ord(\omega_v),
 \qquad \ord(\omega_v)\mid m_v.
\]
The asserted valuations follow.
\end{proof}

\begin{corollary}
\label{cor:pairwise-coprime-cofactors}
There are infinitely many places $v_1,v_2,\ldots$ such that
\[
 d_{v_j}(P_M)=M c_j,
\]
where
\[
 c_j>1,
 \qquad (c_j,M)=1,
 \qquad (c_i,c_j)=1\quad(i\ne j).
\]
\end{corollary}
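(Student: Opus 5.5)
We want infinitely many places $v_j$ with $d_{v_j}(P_M)=Mc_j$, where $c_j>1$, $(c_j,M)=1$, and the $c_j$ pairwise coprime. The plan is to build the places inductively, using \cref{thm:simultaneous-valuations} with a growing finite set of forbidden primes, together with the factorization $d_v(P_M)=M\,m_v\ord(\omega_v)$ from \cref{cor:exact-coprime}.

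\emph{Base step.} Take $\Sigma_1$ to be the set of prime divisors of $M$. \Cref{thm:simultaneous-valuations} supplies a set of places of positive Dirichlet density on which $v_\ell(d_v(P_M))=1$ for $\ell\mid M$. At such a place $\gcd(m_v,M)=1$, because the proof of that proposition prescribes $v_\ell(m_v)=0$ for every $\ell\in\Sigma$. Hence \cref{cor:exact-coprime} gives
\[
 d_v(P_M)=M\,c_v,\qquad c_v:=m_v\ord(\omega_v).
\]
Since $\ord(\omega_v)\mid m_v$, the primes dividing $c_v$ are exactly the primes dividing $m_v$, and none of them divides $M$.

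\emph{Ensuring $c_v>1$.} This is the one point needing care: we must exclude $m_v=1$. The condition $m_v=1$ means $\overline q_v=0$. At all but finitely many places this forces $q$ to reduce to the identity, which can happen only at places dividing the finite denominator ideal of $q$ itself (by \cref{lem:reduction-criterion} with $n=1$). A set of positive Dirichlet density is infinite, so we may discard these finitely many places, together with the finite exceptional set of \cref{thm:exact-order}. Choosing any remaining place gives $v_1$ with $c_1:=c_{v_1}>1$ and $(c_1,M)=1$.

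\emph{Inductive step.} Suppose $v_1,\dots,v_{j}$ have been chosen. Let
\[
 \Sigma_{j+1}:=\{\ell:\ell\mid M c_1\cdots c_j\},
\]
a finite set containing every prime divisor of $M$. Apply \cref{thm:simultaneous-valuations} with $\Sigma=\Sigma_{j+1}$. This yields a positive-density set of places at which $v_\ell(m_v)=0$ for all $\ell\in\Sigma_{j+1}$, so that $d_v(P_M)=M c_v$ with $c_v$ coprime to $M c_1\cdots c_j$. Remove the finitely many places with $m_v=1$ or lying in the exceptional set, and also remove $v_1,\dots,v_j$. Any remaining place serves as $v_{j+1}$, with $c_{j+1}:=c_{v_{j+1}}>1$ and $(c_{j+1},c_i)=1$ for all $i\le j$.

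This induction produces the required infinite sequence of places. The only nonformal input is that a positive-density set of places is infinite, which lets us avoid the finitely many bad places at every stage.
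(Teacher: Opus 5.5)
Your proof is correct and is essentially the paper's: you induct, apply Proposition~\ref{thm:simultaneous-valuations} with $\Sigma$ the primes dividing $Mc_1\cdots c_j$, and discard finitely many places at each stage. The only cosmetic differences are how $c_v=1$ is excluded and where coprimality comes from. You exclude $c_v=1$ via the finite support of the denominator of $q$ (that is, $m_v=1$, using $c_v=m_v\ord(\omega_v)$), whereas the paper excludes $d_v(P_M)=M$ via the finite support of $\dd_{\Ner}(MP_M)$ and reads the cofactor's coprimality directly off the valuations in the proposition's statement rather than from its proof.
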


\begin{proof}
Suppose $v_1,\ldots,v_{j-1}$ have been chosen.  Apply Proposition~\ref{thm:simultaneous-valuations} with $\Sigma$ containing the prime divisors
of $M c_1\cdots c_{j-1}$.  On a set of positive Dirichlet density the normalized cofactor
is coprime to that product.  For every fixed integer $d$, places satisfying
$d_v(P_M)=d$ lie in the finite support of the nonzero ideal
$\dd_{\Ner}(dP_M)$.  Hence, after removing finitely many places, the new
cofactor is $>1$ and differs from the preceding ones.  Induction proves the
claim.
\end{proof}

\begin{proposition}
\label{prop:anti-koblitz}
Under the hypotheses of Theorem~\ref{thm:main}, assume $N_{\delta,t}>1$.  Then only
finitely many finite places have
$d_v(P)$ prime.  More generally, let $R\ge1$ and choose in
Theorem~\ref{thm:squarefree-M} an $M$ having at least $R+1$ distinct prime factors.
Then only finitely many places satisfy
\[
 \Omega(d_v(P_M))\le R,
\]
where $\Omega$ counts prime factors with multiplicity.
\end{proposition}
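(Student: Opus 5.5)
Both assertions follow from the universal divisibility statements already proved, combined with the finiteness of places at which a fixed positive integer occurs as a reduction order.

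\textbf{First assertion.} Assume $N_{\delta,t}>1$ and let $S$ be the finite set supplied by Theorem~\ref{thm:main}(i). For every $v\notin S$, \eqref{eq:N-divides-orders} gives $N_{\delta,t}\mid d_v(P)$. Suppose $d_v(P)=p$ is prime. Then $N_{\delta,t}\mid p$ with $N_{\delta,t}>1$ forces $p=N_{\delta,t}$. This can only happen when $N_{\delta,t}$ is itself prime, and in that case $d_v(P)=N_{\delta,t}$.

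It remains to see that only finitely many places have $d_v(P)$ equal to a fixed integer $d$. By Lemma~\ref{lem:reduction-criterion}, any such $v$ outside $S$ satisfies $v(\dd_{\Ner}(dP))>0$. The point $dP$ is not the identity on the generic fiber, because $P$ is non-torsion by Theorem~\ref{thm:main}. Hence $\dd_{\Ner}(dP)$ is a nonzero ideal, and its support is finite. This is the same argument used in Corollary~\ref{cor:pairwise-coprime-cofactors}. Adding the finitely many places in $S$, only finitely many places have $d_v(P)$ prime.

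\textbf{Second assertion.} Take $M$ squarefree with at least $R+1$ distinct prime factors, and let $P_M$ be as in Theorem~\ref{thm:squarefree-M}. By Corollary~\ref{cor:block-family}, $M\mid d_v(P_M)$ for all $v$ outside a finite set $S'$. For such $v$,
\[
 \Omega(d_v(P_M))\ \ge\ \Omega(M)\ \ge\ R+1.
\]
So every place with $\Omega(d_v(P_M))\le R$ lies in $S'$, which is finite. No finiteness-of-values argument is needed here.

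\textbf{Expected obstacle.} The only delicate point is the first assertion in the borderline case where $N_{\delta,t}$ is a prime $p$. There, places with $d_v(P)=p$ are not ruled out by divisibility and must be shown to be finitely many. This is exactly what the finite support of the nonzero ideal $\dd_{\Ner}(pP)$ provides.
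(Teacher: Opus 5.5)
Your proof is correct and follows the paper's argument: universal divisibility by $N_{\delta,t}$ outside $S$ forces a prime order to equal $N_{\delta,t}$, and such places lie in the finite support of the nonzero ideal $\dd_{\Ner}(N_{\delta,t}P)$. The second assertion likewise follows from $M\mid d_v(P_M)$ outside finitely many places together with $\Omega(M)\ge R+1$.
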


\begin{proof}
Let $S$ be the finite exceptional set outside which
$N_{\delta,t}\mid d_v(P)$.  If $v\notin S$ and $d_v(P)$ is prime, then
necessarily $d_v(P)=N_{\delta,t}$, and in particular $N_{\delta,t}$ is prime.
Every such $v$ lies in the finite support of the nonzero ideal
$\dd_{\Ner}(N_{\delta,t}P)$.  Adding the finitely many places in $S$ proves the
first assertion.  For the second, $M\mid d_v(P_M)$ outside finitely many places
and $\Omega(M)\ge R+1$.
\end{proof}

\begin{remark}
Koblitz's conjecture concerns primality of the full group order
$\#E(\F_p)$, with refinements accounting for unavoidable fixed divisors and
entanglements \cite{Koblitz1988,Zywina2011}.  The preceding proposition is a
fixed-point anti-Koblitz phenomenon: the unnormalized order $d_v(P)$ is
prime only finitely often.  Whenever $U(P)$ is defined and finite, and after
discarding the finitely many exceptional places at which
$U(P)\nmid d_v(P)$, the natural normalized fixed-point order is
\[
 \frac{d_v(P)}{U(P)}.
\]
For $P_M$, Proposition~\ref{thm:simultaneous-valuations} shows that this normalized
cofactor can avoid every prescribed finite set of primes on a set of positive
Dirichlet density.  This is the expected local admissibility input for a future
Kummer--Chebotarev sieve.
\end{remark}

Squarefreeness is needed for the exact identity $U(P_M)=M$ proved above, but
not for forcing a prescribed integer into almost every reduction order. We show that every integer occurs as a forced divisor.

\begin{theorem}
\label{thm:arbitrary-forced-divisor}
Let $M>1$ and write its prime factorization as
\[
 M=\prod_\ell\ell^{r_\ell},
 \qquad
 H(M):=\prod_{r_\ell>0}\ell^{2r_\ell-1},
\]
where all but finitely many $r_\ell$ are zero.
Over $K=K_0(\mu_{H(M)})$, the base change of $G_q$ is a geometrically
nonsplit semiabelian variety of dimension $2\dim B+1$, and there is a
Zariski-dense point $P^{\mathrm{forc}}_M\in G_q(K)$ such that
\[
 M\mid d_v(P^{\mathrm{forc}}_M)
\]
outside finitely many places.  The eventual divisor exists and satisfies
\begin{equation}\label{eq:forced-U-bounds}
 M\mid U(P^{\mathrm{forc}}_M)\mid H(M).
\end{equation}
Moreover, there is a squarefree return-core modulus $Q_M^{\mathrm{forc}}$ with
\[
 (n,Q_M^{\mathrm{forc}})=1
 \quad\Longrightarrow\quad
 \dd_{\Ner}(nP^{\mathrm{forc}}_M)
 =\dd_{\Ner}(P^{\mathrm{forc}}_M).
\]
\end{theorem}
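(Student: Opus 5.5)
The plan is to take $P^{\mathrm{forc}}_M:=R_\beta(q)+\iota(\zeta_{H(M)})$, where $\zeta_{H(M)}\in K$ is a primitive root of unity of order $H(M)$, on the symplectic datum base-changed to $K=K_0(\mu_{H(M)})$. Geometric nonsplitting, Zariski density, and the dimension count $2\dim B+1$ then follow exactly as in Theorem~\ref{thm:squarefree-M}(i). Non-torsionness of $q$ is a geometric property, so it survives base change. Since $\pi(P^{\mathrm{forc}}_M)=\delta q$ is dense, Lemma~\ref{lem:dense-lift} applies.

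For the forced divisor, use Proposition~\ref{prop:block}, which gives $e_\delta=1$. Then for each $\ell$ with $r_\ell>0$ we have $a_\ell=2r_\ell-1$. Hence
\[
 \rho_\ell(\delta,t)=\left\lceil\frac{2r_\ell-1}{2}\right\rceil=r_\ell ,
\]
so $N_{\delta,t}=M$; this is exactly the sharp scale of the remark after Lemma~\ref{lem:amplification}. Theorem~\ref{thm:main}(i) now gives $M\mid d_v(P^{\mathrm{forc}}_M)$ outside a finite set of places, and Theorem~\ref{thm:main}(iii) supplies the squarefree return-core modulus $Q^{\mathrm{forc}}_M$. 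In particular $M$ divides almost every reduction order, so $u_\ell(P^{\mathrm{forc}}_M)\ge r_\ell$ for every $\ell$.

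The remaining step, and the one needing care, is the upper bound $U\mid H(M)$ together with the existence of $U$, i.e. every $u_\ell$ is finite and almost all vanish. I would copy the Perucca argument from Theorem~\ref{thm:squarefree-M}. For a prime $\ell$, apply Perucca's theorem to the dense point $q$ over $K$ and prescribe $v_\ell(m_v)=0$ and $v_{\ell'}(m_v)=0$ for all $\ell'\mid H(M)$. This yields a positive-density set of places with $\gcd(m_v,H(M))=1$ and $\ell\nmid m_v$. On that set, Corollary~\ref{cor:exact-coprime}, valid because $\delta$ is an isomorphism, gives
\[
 d_v(P^{\mathrm{forc}}_M)=H(M)\,m_v\,\ord(\omega_v),\qquad \ord(\omega_v)\mid m_v .
\]
Therefore $v_\ell(d_v)=v_\ell(H(M))$ on this infinite set. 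It follows that $u_\ell\le v_\ell(H(M))$ for every $\ell$, and $u_\ell=0$ when $\ell\nmid H(M)$. So $U(P^{\mathrm{forc}}_M)$ is defined and divides $H(M)$.

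The main obstacle is making sure Perucca's theorem applies to $q$ over the enlarged field $K$ with simultaneous prescribed valuations, which is the same input already used in Theorem~\ref{thm:squarefree-M}. One must also check that $\zeta_{H(M)}$ keeps exact order $H(M)$ at those places, which holds after removing the finitely many places above primes dividing $H(M)$.
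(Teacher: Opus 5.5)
Your proposal is correct and follows the paper's proof essentially step for step. It uses the same point $R_\beta(q)+\iota(\zeta_{H(M)})$, the same computation $e_\delta=1$, $\rho_\ell=r_\ell$, $N_{\delta,t}=M$ feeding into Theorem~\ref{thm:main}, and the same upper bound $U\mid H(M)$ via Perucca's theorem with prescribed valuations combined with Corollary~\ref{cor:exact-coprime}. The only cosmetic difference is that you treat each prime $\ell$ uniformly by prescribing $v_\ell(m_v)=0$ together with $\gcd(m_v,H(M))=1$, whereas the paper handles the cases $\ell\mid M$ and $\ell\nmid H(M)$ separately.
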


\begin{proof}
For the fixed symplectic datum, $e_\delta=1$.  Base-change to
$K=K_0(\mu_{H(M)})$ and translate $R$ by
a primitive $H(M)$-th root of unity.  For $\ell\mid M$ the shift exponent is
$a_\ell=2r_\ell-1$, and hence
\[
 \rho_\ell(\delta,t)
 =\left\lceil\frac{2r_\ell-1}{2}\right\rceil=r_\ell.
\]
Thus $N_{\delta,t}=M$, and Theorem~\ref{thm:main} gives universal divisibility and
the return core.

To prove \eqref{eq:forced-U-bounds}, apply Perucca's theorem to obtain a
set of positive Dirichlet density on which $m_v$ is coprime to $H(M)$.
At these places,
Corollary~\ref{cor:exact-coprime} gives
\[
 d_v(P^{\mathrm{forc}}_M)=H(M)m_v\ord(\omega_v),
\]
so $u_\ell(P^{\mathrm{forc}}_M)\le2r_\ell-1$ for $\ell\mid M$.  If a
rational prime $s\nmid H(M)$ is fixed, prescribe valuation zero at the primes
dividing $H(M)s$; the same formula gives
$s\nmid d_v(P^{\mathrm{forc}}_M)$ on a set of positive Dirichlet density.
Hence no other prime contributes to $U$, and the bounds follow.
\end{proof}

\begin{remark}
Determining $U(P^{\mathrm{forc}}_M)$ exactly for nonsquarefree $M$ requires
finer control of cancellation in
$\omega_v\overline\zeta_v^{\,m_v}$.  The theorem proves that every integer is
a universal forced divisor, but exact realization of arbitrary prime powers
remains open.
\end{remark}

\section{Explicit arithmetic examples}\label{sec:examples}

\subsection{Quadratic descent and a semiabelian surface over
\texorpdfstring{$\Q$}{Q}}
\label{sec:descent}

The split-kernel construction above requires the antisymmetric homomorphism
$\delta=\beta-\widehat\beta$ to be defined over the ground field.  For an
elliptic curve over $\Q$, every $\Q$-endomorphism is Rosati symmetric, so a
nonzero split-$\Gm$ block does not exist.  The obstruction disappears after
pairing an anti-invariant CM endomorphism with the sign character of a
nonsplit torus.  The two Galois signs cancel.

Throughout this section, let $L/K$ be a quadratic extension with nontrivial
automorphism $\sigma$, and put
\[
 T=R^1_{L/K}\Gm
 :=\ker\bigl(\operatorname{Nm}_{L/K}:\operatorname{Res}_{L/K}\Gm
 \longrightarrow\Gm\bigr).
\]
After base change to $L$, the torus $T_L$ is isomorphic to $\Gm$, and the
nontrivial Galois element acts on this split form by inversion; see
\cite{Milne2017} for the character-lattice description of tori and Galois
descent.

\begin{lemma}
\label{lem:base-change-orders}
Let $F/K$ be a finite extension, let $G/K$ be a semiabelian variety, and let
$X\in G(K)$.  Outside a finite set of places, for every $w\mid v$ one has
\[
 \ord(\overline X_v)=\ord(\overline X_w).
\]
Consequently $u_\ell(X)$, and hence $U(X)$ whenever defined, is unchanged by
finite base extension.
\end{lemma}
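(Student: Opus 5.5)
The plan is to spread out over a common open base and compare residue fields.

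Choose a finite set $S$ of places of $K$ and a semiabelian scheme $\mathcal G_U/U$ over $U=\Spec\cO_{K,S}$ extending $G$, with $X$ extending to a section $\mathcal X\in\mathcal G_U(U)$. Let $U_F\subseteq\Spec\mathcal O_F$ be the preimage of $U$. The base change $\mathcal G_{U_F}:=\mathcal G_U\times_U U_F$ is then a semiabelian scheme over $U_F$ extending $G_F$, and $\mathcal X_{U_F}$ extends $X$. By \cref{lem:model-comparison}, applied after enlarging $S$ if needed (once over $K$ and once over $F$), the orders $\ord(\overline X_v)$ and $\ord(\overline X_w)$ may be computed in these finite-type models. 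The resulting sets of integers change by only finitely many places, so neither $u_\ell$ nor $U$ is affected.

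For $w\mid v$ with $v\in U$, the reduction $\overline X_w$ is the image of $\overline X_v$ under the map
\[
 \mathcal G_U(k_v)\longrightarrow\mathcal G_U(k_w)=\mathcal G_{U_F}(k_w)
\]
induced by the field extension $k_v\subseteq k_w$. This map is an injective group homomorphism, because points of a scheme over a field inject under field extension: $\mathcal G_{U,k_v}(k_v)\hookrightarrow\mathcal G_{U,k_v}(k_w)$. An injective homomorphism preserves exact orders, so $\ord(\overline X_v)=\ord(\overline X_w)$.

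For the consequence, every place $w$ of $F$ outside a finite set lies over some $v\notin S$, and every $v$ has at least one $w$ above it. The two multisets of orders therefore agree as sets, up to the finitely many excluded places. Explicitly, $\ell^a$ divides $d_v$ for almost all $v$ if and only if $\ell^a$ divides $d_w$ for almost all $w$. For the forward direction, only finitely many $w$ lie above the finitely many bad $v$. For the converse, each bad $v$ has some $w$ above it with $d_w=d_v$, and there are only finitely many bad $w$.

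Hence $u_\ell(X)$ computed over $K$ equals $u_\ell(X_F)$ over $F$, and the same holds for $U(X)$ whenever it is defined. No step is a real obstacle. The only care needed is to ensure that the spreading-out over $U_F$ is the base change of the one over $U$, so that the comparison in \cref{lem:model-comparison} is compatible on both sides.
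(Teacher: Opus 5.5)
Your proposal is correct and is essentially the paper's proof: spread out to a semiabelian model over an open subscheme, base-change it to $\mathcal O_F$, and use the injectivity of $\mathcal G(k_v)\to\mathcal G(k_w)$ to conclude that exact orders are preserved. Your two-directional finiteness argument for $u_\ell$ uses that each $v$ has at least one $w$ above it, and that only finitely many $w$ lie over the excluded places. It spells out what the paper dismisses as following immediately.
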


\begin{proof}
After deleting finitely many places, choose a semiabelian model
$\mathcal G$ over an open subscheme of $\Spec\OK$ and base-change it to
$\mathcal O_F$.  For $w\mid v$, the natural map
\[
 \mathcal G(k_v)\longrightarrow\mathcal G(k_w)
\]
is injective.  The order of an element is unchanged under an injective group
homomorphism.  The assertion about the almost-everywhere valuations follows
immediately.
\end{proof}

Let $(A,\lambda)/K$ be a principally polarized abelian variety.  We use
$\lambda$ to identify $A$ with $A^\vee$ and write $\dagger$ for the Rosati
involution.  The Rosati involution commutes with Galois action because
$\lambda$ is defined over $K$.

\begin{lemma}
\label{lem:ribet-inversion}
Let $S$ be a scheme, let $(A,\lambda)/S$ be a principally polarized abelian
scheme, let $\beta\in\End_S(A)$, and put $\delta=\beta-\beta^\dagger$.  Write
$\operatorname{inv}_*$ for pushout along $z\mapsto z^{-1}$ on $\Gm$.  For a
rigidified $\Gm$-torsor $\mathcal T$, write $\mathcal T^\vee$ for its inverse
torsor.  For a section $x\in A(S)$, push the extension $G_{-x}$ out by
inversion on $\Gm$ and use the canonical biextension isomorphism of rigidified
$\Gm$-torsors over $A$,
\begin{equation}\label{eq:poincare-inversion-identification}
 \left(\left.\cP_A^\times\right|_{A\times\{-x\}}\right)^\vee
 \xrightarrow{\sim}
 \left.\cP_A^\times\right|_{A\times\{x\}}.
\end{equation}
The resulting isomorphism
\[
 \jmath_x:\operatorname{inv}_*G_{-x}\xrightarrow{\sim}G_x
\]
satisfies
\begin{equation}\label{eq:ribet-inversion-compatibility}
 \jmath_x\bigl(\operatorname{inv}_*R_{\beta^\dagger}(-x)\bigr)=R_\beta(x),
\end{equation}
where $\operatorname{inv}_*R_{\beta^\dagger}(-x)$ denotes the image of the
point in the pushout.
The construction is canonical and compatible with arbitrary base change.
\end{lemma}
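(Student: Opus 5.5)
The plan is to build $\jmath_x$ from the biextension structure and then test \eqref{eq:ribet-inversion-compatibility} using the uniqueness of normalized sections in Proposition~\ref{prop:ribet-point}. Because everything is canonical and compatible with base change, I will argue universally: take $S'=A$ and let $x$ be the tautological section. With $\lambda$ identifying $A$ and $A^\vee$, the extension $G_x$ is the rigidified torsor $\cP_A^\times|_{A\times\{x\}}$ carrying the group law that comes from the first partial composition of the biextension. Additivity in the second variable gives a canonical trivialization of $\cP_A(y,x)\otimes\cP_A(y,-x)$, and hence the isomorphism \eqref{eq:poincare-inversion-identification}. It remains to check that this isomorphism respects the first partial law; this holds by the compatibility axiom between the two partial group laws of a biextension. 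Composing it with the identification of $\operatorname{inv}_*\mathcal T$ with $\mathcal T^\vee$ for a $\Gm$-torsor then yields $\jmath_x$.

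Next I will compute the images of both $R_\beta(x)$ and $\jmath_x(\operatorname{inv}_*R_{\beta^\dagger}(-x))$ in $G_x$. For $\beta^\dagger$ the antisymmetric part is $\beta^\dagger-\beta=-\delta$, so $R_{\beta^\dagger}(-x)$ lies above $-\delta(-x)=\delta x$. Since $\operatorname{inv}_*$ and $\jmath_x$ commute with the projection to $A$, both points lie above $\delta x$, that is, in the same fiber $\cP_A(\delta x,x)^\times$. Their ratio is therefore a section $u\in\Gm(A)$ defined over the universal base $A$. If $S$ is connected and reduced with $A$ proper, this ratio is locally constant, and normalization at $x=0$ makes it equal to $1$. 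For the general case I will prove the identity over the universal base, namely the relevant Hom-scheme or a base of finite type over $\Z$, and then pull it back to $S$.

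To make the comparison exact rather than only up to this constant, I will unwind the construction in Proposition~\ref{prop:ribet-point}. The section $r_\beta(x)$ is the unit under the chain $\cP_A(\beta x,x)\simeq\cP_A(\beta^\dagger x,x)$ followed by additivity. Applying the inversion identification sends the corresponding chain for $(\beta^\dagger,-x)$ to the chain for $(\beta,x)$, with the roles of $\beta$ and $\beta^\dagger$ exchanged under the symmetry $\cP_A(y,z)\simeq\cP_A(z,y)$ provided by $\lambda$. This has to be a diagram chase among canonical biextension isomorphisms, and the step I expect to be the main obstacle is keeping track of sign and symmetry conventions. That difficulty is exactly why I plan to fall back on the rigidity argument of the previous paragraph: two normalized sections of the same torsor along the graph of $\delta$ differ by a global unit on $A$ that is trivial at the origin, and such a unit is $1$ because $\Gamma(A,\cO^\times)=\Gamma(S,\cO^\times)$ for an abelian scheme. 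The identity then holds at the origin by normalization, since both sides are the unit. This uniqueness is the same argument that gives the uniqueness part of Proposition~\ref{prop:ribet-point}, and base-change compatibility follows from canonicity.
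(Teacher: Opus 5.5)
Your final argument matches the paper's proof. You transport $r_{\beta^\dagger}(-x)$, which lies over $(-\delta)(-x)=\delta x$, through $\operatorname{inv}_*$ and the second-variable biadditivity isomorphism, universally in $x$, to obtain a section of $(\delta,\id)^*\cP_A^\times$; you check that it is the unit at the origin; and you conclude by uniqueness of the normalized section, i.e.\ $\Gamma(A,\cO^\times)=\Gamma(S,\cO^\times)$. The detours through connected reduced bases, a universal Hom-scheme, and the explicit diagram chase are unnecessary, because this rigidity argument already works over an arbitrary base $S$.
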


\begin{proof}
The antisymmetric part of $\beta^\dagger$ is $-\delta$, so
\[
 r_{\beta^\dagger}(-x)\in
 \cP_A\bigl((-\delta)(-x),-x\bigr)^\times
 =\cP_A(\delta x,-x)^\times.
\]
Pushout by inversion sends this element to the inverse $\Gm$-torsor of
$\cP_A(\delta x,-x)^\times$.  By biadditivity of the normalized Poincar\'e
biextension in its second variable, \eqref{eq:poincare-inversion-identification}
identifies that inverse torsor canonically with
$\cP_A(\delta x,x)^\times$.  As $x$ varies, the resulting elements form a
section of $(\delta,\id)^*\cP_A^\times$.  Its value at the origin is the unit.
The uniqueness of the normalized Ribet section in
Proposition~\ref{prop:ribet-point} therefore identifies it with $r_\beta$, proving
\eqref{eq:ribet-inversion-compatibility}.
\end{proof}

\begin{theorem}[Quadratic-twisted Ribet descent]
\label{thm:quadratic-descent}
Let $\beta\in\End_L(A_L)$ satisfy
\begin{equation}\label{eq:descent-beta}
 \sigma(\beta)=\beta^\dagger,
\end{equation}
and put
\[
 \delta=\beta-\beta^\dagger.
\]
Assume that $\delta$ is an isogeny.  Let $Q\in A(K)$, put
\[
 q=\delta Q\in A(L),
\]
and assume that the cyclic subgroup generated by $\delta^2Q$ is Zariski dense
in $A$.  Then:
\begin{enumerate}[label=\textup{(\roman*)}]
\item $\sigma(q)=-q$, and the extension $G_q/L$ of $A_L$ by $\Gm$
      represented by $q$ descends to an extension
\begin{equation}\label{eq:descent-extension}
 1\longrightarrow T\longrightarrow G\longrightarrow A\longrightarrow0
\end{equation}
      over $K$;
\item the normalized Ribet point $R_\beta(q)\in G_q(L)$ descends to a point
      $R\in G(K)$ whose projection is $\delta^2Q$;
\item $G$ is geometrically nonsplit, and for every $t\in T(K)_{\tors}$ the
      point $P=R+t$ has Zariski-dense cyclic orbit in $G$;
\item identify $G_L$ with $G_q$ and choose a splitting $T_L\simeq\Gm$.
      Let $e_\delta$ be the exponent of $\ker\delta$ and define
      $N_{\delta,t}$ by \cref{def:Ndelta}.  This integer is independent of the
      splitting: the two splittings differ by inversion and hence replace
      $t$ by $t^{-1}$, which has the same order.  If $N_{\delta,t}>1$, then
\[
 N_{\delta,t}\mid d_v(P)
\]
      outside a finite set of places of $K$.  In particular, there is a
      squarefree integer $Q_P$, divisible by $\rad(N_{\delta,t})$, such that
\begin{equation}\label{eq:descent-return-core}
 (n,Q_P)=1
 \quad\Longrightarrow\quad
 \dd_{\Ner}(nP)=\dd_{\Ner}(P),
\end{equation}
      where $\Ner$ is the N\'eron lft-model of $G$.
\end{enumerate}
\end{theorem}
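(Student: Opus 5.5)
The plan is Galois descent along $L/K$: twist the action of $\sigma$ on $G_q$ by the inversion isomorphism of \cref{lem:ribet-inversion}, and then apply \cref{thm:main} over $L$ together with \cref{lem:base-change-orders}.

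\emph{Step one (item (i)).} Because $\lambda$ and $Q$ are defined over $K$, we have $\sigma(\delta)=\sigma(\beta)-\sigma(\beta)^\dagger=\beta^\dagger-\beta=-\delta$. Hence $\sigma(q)=-\delta Q=-q$. Conjugation by $\sigma$ gives an isomorphism $\sigma^*G_q\simeq G_{\sigma q}=G_{-q}$. Composing with the isomorphism $\jmath_q:\operatorname{inv}_*G_{-q}\xrightarrow{\sim}G_q$ of \cref{lem:ribet-inversion} produces a $\sigma$-semilinear automorphism $\tau$ of $G_q$. This automorphism acts on $\Gm$ by $z\mapsto\sigma(z)^{-1}$, which is exactly the descent datum defining $T=R^1_{L/K}\Gm$.

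Next I must check the cocycle condition $\tau^2=\id$. The composite $\tau^2$ is an automorphism of the extension $G_q$ that is the identity on $\Gm$ and on $A$. Such automorphisms are of the form $x\mapsto x+\phi(\pi x)$ with $\phi\in\Hom(A,\Gm)=0$, so $\tau^2$ is trivial. Galois descent for affine-over-$A$ torsors (quasi-projectivity is automatic) then gives \eqref{eq:descent-extension}.

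\emph{Step two (item (ii)).} By transport of structure, $\sigma(R_\beta(q))$ is the normalized Ribet point $R_{\sigma\beta}(\sigma q)=R_{\beta^\dagger}(-q)$ in $G_{-q}$. Equation \eqref{eq:ribet-inversion-compatibility} then says that $\tau(R_\beta(q))=R_\beta(q)$. So the point is $\tau$-invariant and descends to $R\in G(K)$, with projection $\delta q=\delta^2Q$.

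\emph{Step three (item (iii)).} Geometric nonsplitness and density are geometric statements, so it suffices to check them over $L$. There $q$ is non-torsion, since $\delta q=\delta^2Q$ is dense and $\dim A>0$. The point $t\in T(K)_\tors$ becomes a root of unity in $\Gm(L)$. \cref{thm:main}, which applies to $q$ with $\Z(\delta q)$ dense, gives geometric nonsplitness and the density of $\Z P$. The choice of splitting only replaces $t$ by $t^{-1}$, so $N_{\delta,t}$ is well defined.

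\emph{Step four (item (iv)).} Apply \cref{thm:main}(i) over $L$ to get $N_{\delta,t}\mid d_w(P)$ for almost all places $w$ of $L$. By \cref{lem:base-change-orders}, $d_v(P)=d_w(P)$ for $w\mid v$ outside finitely many places, so the divisibility descends to almost all $v$ of $K$. Now $T$ is a torus over $K$ and the sequence is an exact sequence of semiabelian varieties, so \cref{prop:finite-cone-return} applies over $K$ with $h_1=N_{\delta,t}$ and the Néron lft-model of $G$. This yields $Q_P$ and \eqref{eq:descent-return-core}.

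I expect the main obstacle to be step one, together with the identification $\sigma(R_\beta(q))=R_{\beta^\dagger}(-q)$. Both require the sign conventions of the Poincaré biextension under Galois conjugation and inversion to match \eqref{eq:poincare-inversion-identification}. The uniqueness of normalized sections in \cref{prop:ribet-point} should absorb any ambiguity in this matching.
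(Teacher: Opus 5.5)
Your proposal is correct and follows the paper's proof essentially step for step. You use the same descent datum built from $\sigma^*G_q\simeq G_{-q}$ and $\jmath_q$, the same cocycle check via $\Hom(A_L,\Gm)=0$, the same appeal to \cref{lem:ribet-inversion} to show that $R_\beta(q)$ is fixed, and the same passage through \cref{thm:main} over $L$, \cref{lem:base-change-orders}, and \cref{prop:finite-cone-return} over $K$. The differences are cosmetic: you descend $G_q$ as an affine $\Gm$-torsor over $A$ rather than by effective descent for quasi-projective schemes, and you take density from \cref{thm:main} over $L$ rather than citing \cref{lem:dense-lift} directly, which is what \cref{thm:main} itself uses.
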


\begin{proof}
Since Galois action commutes with Rosati,
\[
 \sigma(\beta^\dagger)=\sigma(\beta)^\dagger
 =(\beta^\dagger)^\dagger=\beta.
\]
Hence $\sigma(\delta)=-\delta$, and the $K$-rationality of $Q$ gives
$\sigma(q)=-q$.

Regard $q$ as an element of $A_L^\vee(L)$ through the fixed principal
polarization.  Under the functorial Barsotti--Weil identification
\cite[Chapter~VII, \S3]{Serre1988}, the conjugate extension $\sigma^*G_q$ has
class $\sigma(q)=-q$.  Let
\[
 c:\sigma^*G_q=G_{-q}\xrightarrow{\sim}G_q
\]
be the canonical isomorphism obtained by first pushing $G_{-q}$ out along
$z\mapsto z^{-1}$ and then applying the rigidified Poincar\'e identification
\eqref{eq:poincare-inversion-identification}; equivalently, it is the composite
\[
 G_{-q}\longrightarrow\operatorname{inv}_*G_{-q}
 \xrightarrow{\ \jmath_q\ }G_q.
\]
It induces the identity on $A_L$ and inversion on the toric kernel.

We verify the cocycle rather than appealing implicitly to uniqueness.  The
composite
\[
 c\circ\sigma^*c:\sigma^{2*}G_q\longrightarrow G_q
\]
induces the identity both on the abelian quotient and on the toric kernel,
because inversion is applied twice.  Automorphisms of an extension inducing
the identity on kernel and quotient are parametrized by
$\Hom_L(A_L,\Gm)$, which is zero.  Therefore
\[
 c\circ\sigma^*c=\id_{G_q}.
\]
Thus $c$ is a descent datum.

An algebraic group of finite type over a field is quasi-projective
\cite[Lemma~39.8.7, Tag~0BF7]{StacksQuasiProjective}.  Effective descent for
quasi-projective schemes along the surjective finite locally free map
$\Spec L\to\Spec K$ is
\cite[Lemma~39.25.3, Tag~0CCJ]{StacksDescent}.  It descends the underlying
scheme.  The multiplication, inverse, identity, quotient map, and kernel
embedding are compatible with the descent datum and descend by fpqc descent
for morphisms.  This gives the group extension
\eqref{eq:descent-extension}.  Its toric kernel is the norm-one torus because
the descent action on the split character lattice is the sign action.

We next descend the normalized Ribet point.  Base-change functoriality of the
Poincar\'e biextension and the normalized section gives
\[
 \sigma\bigl(R_\beta(q)\bigr)=R_{\beta^\dagger}(-q)
 \quad\text{in }G_{-q}(L).
\]
The isomorphism $c$ is precisely pushout by inversion on the toric fiber,
followed by the canonical Poincar\'e biextension identification.  Therefore
Lemma~\ref{lem:ribet-inversion}, applied with $x=q$, gives
\begin{equation}\label{eq:descent-ribet-identity}
 c\!\left(\sigma(R_\beta(q))\right)=R_\beta(q).
\end{equation}
Thus $R_\beta(q)$ is fixed by the descent datum and descends to $R\in G(K)$.
Its projection is
\[
 \delta q=\delta^2Q.
\]
Notice that $\delta^2$ is Galois invariant and hence defined over $K$.

The density hypothesis implies that $\delta^2Q$, and therefore $q$, is
non-torsion.  After base change to $L$, the extension class of $G$ is the
non-torsion point $q$.  Hence the extension is geometrically nonsplit.  The
projection of $R+t$ is $\delta^2Q$, so Lemma~\ref{lem:dense-lift} applied over
$L$ shows that $R+t$ is Zariski dense in $G_L$, and therefore in $G$.

Finally apply Theorem~\ref{thm:main} after base change to $L$.  It gives
$N_{\delta,t}\mid d_w(P)$ outside finitely many places $w$ of $L$.
By Lemma~\ref{lem:base-change-orders}, the same divisibility holds outside
finitely many places $v$ of $K$.  The return-core assertion now follows from
Proposition~\ref{prop:finite-cone-return} over $K$.
\end{proof}

The descent theorem has a particularly simple realization on a $j=0$ elliptic
curve.  It gives the phenomenon in the smallest dimension allowed by
Silverman's conjecture and over the original ground field $\Q$.

\begin{theorem}
\label{thm:surface-Q}
Let
\[
 L=\Q(\sqrt{-3}),
 \qquad
 T=R^1_{L/\Q}\Gm,
 \qquad
 E:\ y^2=x^3-2,
\]
and put
\[
 \omega=\frac{-1+\sqrt{-3}}2,
 \qquad
 \beta=[\omega]\in\End_L(E_L),
 \qquad
 Q_0=(3,5)\in E(\Q).
\]
Then there is a geometrically nonsplit semiabelian surface
\begin{equation}\label{eq:surface-extension}
 1\longrightarrow T\longrightarrow G_{\mathrm{surf}}
 \longrightarrow E\longrightarrow0
\end{equation}
and a point $P_{\mathrm{surf}}\in G_{\mathrm{surf}}(\Q)$ such that:
\begin{enumerate}[label=\textup{(\roman*)}]
\item $\mathbf ZP_{\mathrm{surf}}$ is Zariski dense in
      $G_{\mathrm{surf}}$;
\item
\[
 U(P_{\mathrm{surf}})=2;
\]
\item there is a squarefree integer $Q_{\mathrm{surf}}$, divisible by $2$,
      such that
\begin{equation}\label{eq:surface-return-core}
 (n,Q_{\mathrm{surf}})=1
 \quad\Longrightarrow\quad
 \dd_{\Ner}(nP_{\mathrm{surf}})
 =\dd_{\Ner}(P_{\mathrm{surf}});
\end{equation}
\item all but finitely many rational primes, together with all their positive
      powers, are return indices and missing exact reduction orders.
\end{enumerate}
Consequently Silverman's conjecture holds for the pair
$(G_{\mathrm{surf}},P_{\mathrm{surf}})$, and the product-case equality between
the component number of the orbit closure and the eventual reduction divisor
does not extend to semiabelian surfaces over $\Q$.
\end{theorem}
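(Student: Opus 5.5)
We obtain Theorem~\ref{thm:surface-Q} as a specialization of Theorem~\ref{thm:quadratic-descent}, with $K=\Q$, $L=\Q(\sqrt{-3})$, $A=E$ carrying its canonical principal polarization, $\beta=[\omega]$ and $Q=Q_0$. First we verify the descent hypothesis \eqref{eq:descent-beta}. On an elliptic curve the Rosati involution is complex conjugation on $\End(E_L)=\Z[\omega]$, so $[\omega]^\dagger=[\bar\omega]$. Since $E$ is defined over $\Q$, complex conjugation $\sigma$ sends $[\omega]$ to $[\bar\omega]$ as well. Hence $\sigma(\beta)=\beta^\dagger$, and
\[
\delta=[\omega-\bar\omega]=[\sqrt{-3}].
\]
This is an isogeny of degree $3$. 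Its kernel has order $3$, so $e_\delta=3$, and $\delta^2=[-3]$. Next we check density. The point $Q_0=(3,5)$ lies on $E$ because $27-2=25$. We show it has infinite order, either by Nagell--Lutz (for instance, computing $2Q_0$ and finding non-integral coordinates) or by citing that $E$ has rank one with $Q_0$ a generator. Then $\Z(\delta^2Q_0)=\Z(3Q_0)$ is infinite, hence Zariski dense in the curve $E$. Theorem~\ref{thm:quadratic-descent}(i)--(iii) now produces $G_{\mathrm{surf}}$ as an extension of $E$ by $T$, a point $R\in G_{\mathrm{surf}}(\Q)$ lying over $-3Q_0$, and geometric nonsplitness.

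For the torsion shift we take $t\in T(\Q)_{\tors}$ equal to $-1$, which has order $2$ in any splitting. We set $P_{\mathrm{surf}}=R+t$. Because $v_2(e_\delta)=0$ and $a_2=1$, we get $\rho_2=\lceil 1/2\rceil=1$. All other $\rho_\ell$ vanish, so $N_{\delta,t}=2$. Theorem~\ref{thm:quadratic-descent}(iii)--(iv) then gives (i) and (iii): $2\mid d_v(P_{\mathrm{surf}})$ for almost all $v$, together with a squarefree $Q_{\mathrm{surf}}$ divisible by $2$. Statement (iv) follows from (iii) together with Proposition~\ref{prop:intro-return-excludes-order}. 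Silverman's conjecture for the pair follows from Corollary~\ref{cor:silverman}, transported along the descent: the generic fiber is a connected semiabelian surface, and the set of return indices is infinite.

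The main obstacle is the upper bound $U(P_{\mathrm{surf}})\mid 2$, since only $2\mid U$ comes for free. Here $\delta$ is not an isomorphism, so Proposition~\ref{thm:exact-order} does not apply directly. We plan to work over $L$, using Lemma~\ref{lem:base-change-orders}, at places $w$ where $m_w=\ord(\overline q_w)$ is prime to $6$. Here $q=\delta Q_0$. For such $w$, the element $n_w=\ord(\delta \overline q_w)$ equals $m_w$, because $\ker\delta$ is $3$-torsion. The Ribet component $\omega_w=[m_w]\overline R_w$ lies in $\mu_{m_w}$ by Corollary~\ref{cor:ribet-reduction}. The argument of Proposition~\ref{thm:exact-order} (that is, $\ord(x)=m\,\ord([m]x)$ when the image has exact order $m$) together with Lemma~\ref{lem:coprime-orders} then gives
\[
d_w(P)=2\,m_w\,\ord(\omega_w),
\]
where $\ord(\omega_w)$ divides $m_w$. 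Now let $\ell$ be a prime other than $2$, or $\ell=2$ considered for the exponent $2$. We apply Perucca's theorem, as in Corollary~\ref{cor:torus-no-finite-multiples}, to the non-torsion point $q$ on the connected curve $E$. This gives a positive-density set of places on which $m_w$ is prime to $6\ell$. On that set $v_\ell(d_w)=0$ for odd $\ell$, and $v_2(d_w)=1$. Hence $u_\ell=0$ for odd $\ell$ and $u_2=1$, so $U(P_{\mathrm{surf}})=2$. This also yields the final remark about component numbers, because the orbit closure is connected.
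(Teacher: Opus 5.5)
Your proposal is correct and follows the paper's proof essentially step for step. You specialize the quadratic descent with $\delta=[\sqrt{-3}]$, $e_\delta=3$, $t=-1$ and $N_{\delta,t}=2$, then work over $L$ at places with $(m_w,6)=1$ to get $d_w=2m_w\ord(\omega_w)$, and finally use Perucca's theorem plus base-change invariance to pin down $U=2$. The only cosmetic difference is that you check non-torsion of $Q_0$ via $x(2Q_0)=129/100$ rather than through the divisibility condition $y^2\nmid\Delta$.
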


\begin{proof}
The curve $E$ has complex multiplication by $\Z[\omega]$ over $L$: explicitly,
$[\omega](x,y)=(\omega x,y)$.  Under the canonical principal polarization,
the Rosati involution acts as complex conjugation on the CM field; see, for
example,
\cite[Proposition~5.1]{Deligne1982}.  Thus
\[
 \beta^\dagger=[\overline\omega]=[\omega^2],
 \qquad
 \delta:=\beta-\beta^\dagger=[\omega-\omega^2]=[\sqrt{-3}],
\]
so
\begin{equation}\label{eq:surface-delta}
 \delta^2=[-3],
 \qquad \deg\delta=3.
\end{equation}
The geometric kernel has order $3$ and is nontrivial; hence its exponent is
\begin{equation}\label{eq:surface-kernel-exponent}
 e_\delta=3.
\end{equation}
Complex conjugation satisfies $\sigma(\beta)=\beta^\dagger$.

The point $Q_0$ is non-torsion.  Indeed, it is integral and nonzero, and the
Nagell--Lutz theorem would force either $y(Q_0)=0$ or
$y(Q_0)^2\mid\Delta(E)=-1728$; but $25\nmid1728$
\cite[Chapter~VIII, Corollary~7.2]{SilvermanAEC}.  Set
\[
 q=\delta Q_0.
\]
Then $\sigma(q)=-q$ and
\[
 \delta q=\delta^2Q_0=[-3]Q_0
\]
is non-torsion, hence has dense cyclic orbit on $E$.
Theorem~\ref{thm:quadratic-descent} therefore descends both the extension and the
normalized Ribet point to $\Q$.

The norm-one torus contains the rational point $t=-1$ of exact order $2$.
Let $R\in G_{\mathrm{surf}}(\Q)$ be the descended Ribet point and set
\[
 P_{\mathrm{surf}}=R+t.
\]
Since $v_2(e_\delta)=v_2(3)=0$, the defining formula gives
\[
 \rho_2(\delta,t)=\left\lceil\frac12\right\rceil=1,
 \qquad N_{\delta,t}=2.
\]
Thus Theorem~\ref{thm:quadratic-descent} gives geometric nonsplitting, density,
universal evenness outside finitely many places, and
\eqref{eq:surface-return-core}.  The final return and missing-order statement
follows from Proposition~\ref{prop:intro-return-excludes-order}.

It remains to prove exactness of the eventual divisor.  We work first over
$L$.  At a good finite place $w$, put
\[
 m_w=\ord(\overline q_w).
\]
If $(m_w,3)=1$, then multiplication by $\delta$ preserves the order of
$\overline q_w$: its image order divides $m_w$, while
$m_w\mid3\ord(\delta\overline q_w)$ by \eqref{eq:m-divides-en}.  Hence the
projection of $\overline P_{\mathrm{surf},w}$ has exact order $m_w$.
If in addition $m_w$ is odd, the Ribet identity gives
\[
 [m_w]\overline R_w=\eta_w\in\mu_{m_w}(k_w),
 \qquad \ord(\eta_w)\mid m_w,
\]
and $[m_w]t=t$.  The two factors $\eta_w$ and $t$ have coprime orders, so
\[
 \ord(\eta_w\,t)=2\ord(\eta_w).
\]
The image of $\overline P_{\mathrm{surf},w}$ in the elliptic quotient has
exact order $m_w$.  For any element $x$ whose image in a quotient has exact
order $m$, one has $\ord(x)=m\ord([m]x)$.  Therefore
\begin{equation}\label{eq:surface-exact-order}
 \begin{aligned}
 d_w(P_{\mathrm{surf}})
 &=m_w\ord\!\left([m_w]\overline P_{\mathrm{surf},w}\right)\\
 &=2m_w\ord(\eta_w)
 \end{aligned}
 \qquad\bigl((m_w,6)=1\bigr).
\end{equation}
All places in this calculation are understood to lie outside the fixed finite
set containing those above $2$ and $3$ and those where the curve, extension,
descent datum, or torsion section has bad reduction.

The orbit closure of $q$ is the connected elliptic curve $E$.  In Perucca's
notation its component number is therefore $n_q=1$, and we apply
\cite[Main Theorem~1]{Perucca2009} with $m=1$.  It gives sets of positive
Dirichlet density on which the valuations of $m_w$ at any prescribed finite
set of rational primes are all zero.  Choosing $m_w$ coprime to $6$
in \eqref{eq:surface-exact-order} shows that
$v_2(d_w(P_{\mathrm{surf}}))=1$ on a set of positive Dirichlet density.  Given any odd
prime $r$, choosing $m_w$ coprime to $6r$ shows that
$r\nmid d_w(P_{\mathrm{surf}})$ on a set of positive Dirichlet density.  Hence
\[
 U(P_{\mathrm{surf},L})=2.
\]
By Lemma~\ref{lem:base-change-orders}, $U$ is invariant under the finite extension
$L/\Q$, proving (ii).  Since the orbit closure is the connected group
$G_{\mathrm{surf}}$, its component number is $1$, which proves the last
assertion.
\end{proof}

A $\Q$-defined endomorphism of an elliptic curve is fixed by the Rosati
involution, so the split-$\Gm$ construction has no nonzero antisymmetric block
over $\Q$; see Proposition~\ref{prop:no-surface-Q-method} below.  The theorem above does not
contradict that obstruction.  The endomorphism $[\omega]$ and the extension
class are defined only over $L$ and are both anti-invariant, while the
character lattice of the norm-one torus carries the same sign.  Descent pairs
the two signs.  Thus the true obstruction is split-toric, not dimensional.

\begin{proposition}
\label{prop:no-surface-Q-method}
Let $E/\Q$ be an elliptic curve, identify $E\simeq E^\vee$ through the
canonical principal polarization, and let $\beta:E^\vee\to E$ be defined over
$\Q$.  Then $\beta=\widehat\beta$.  Consequently, the split-$\Gm$
Ribet-shift construction of \cref{thm:main} cannot produce a semiabelian
surface over $\Q$ with elliptic quotient.
\end{proposition}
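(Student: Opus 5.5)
The plan is to show that every $\Q$-rational homomorphism $E^\vee\to E$ is fixed by the Rosati involution, so that $\delta=\beta-\widehat\beta=0$. This contradicts the standing hypothesis of \cref{thm:main} that $\delta$ is an isogeny, since $\dim E=1>0$.

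First, identify $E^\vee$ with $E$ through the canonical principal polarization $\lambda$. Then $\beta$ becomes an element $\alpha=\lambda^{-1}$-twisted endomorphism in $\End_\Q(E)$, and $\widehat\beta$ corresponds to its Rosati image $\alpha^\dagger$, the dual isogeny under the standard identification. Next, recall that $\End_\Q(E)=\Z$. An elliptic curve over $\Q$ has no complex multiplication defined over $\Q$: if $\End_\Q(E)\otimes\Q$ were an imaginary quadratic field $F$, then $\Q$ would act on the one-dimensional space of invariant differentials through an embedding $F\hookrightarrow\Q$, which is impossible. (This uses the faithfulness of the action of $\End(E)$ on the tangent space in characteristic zero.) Hence $\alpha=[n]$ for some $n\in\Z$. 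The Rosati involution fixes $[n]$, because the dual of multiplication by $n$ is multiplication by $n$, so $\widehat\beta=\beta$.

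For the consequence, \cref{thm:main} requires that $\delta$ be an isogeny, and also that $\Z(\delta q)$ be dense in $A$. Here $\delta=0$, so $\delta q=0$ is torsion and the hypothesis fails for every $q$. Equivalently, the normalized Ribet section lies over the graph of $0$: $R_\beta(q)$ lies in the toric kernel, and its projection is not dense. So no surface $1\to\Gm\to G_q\to E\to0$ over $\Q$ arises from the construction.

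The only nontrivial step is $\End_\Q(E)=\Z$, i.e.\ excluding CM defined over $\Q$. I would handle it via the tangent-space embedding argument above rather than by citing the CM classification. A minor care point is matching the paper's biduality convention for $\widehat\beta$ with the Rosati involution under $\lambda$. For $[n]$ both conventions give $[n]$ regardless, so this causes no difficulty.
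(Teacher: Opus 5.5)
Your proof is correct and follows the paper's argument essentially verbatim: you identify $\beta$ with an element of $\End_\Q(E)$, use the faithful action on the one-dimensional space of invariant differentials to force it into $\Z$, and note that multiplication maps are Rosati-fixed, so $\delta=0$ and the hypotheses of \cref{thm:main} fail. The one step you leave implicit, which the paper states, is that an endomorphism acting on differentials by a rational scalar is integral over $\Z$ and so lies in $\Z$ (equivalently, the only order in $\Q$ is $\Z$).
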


\begin{proof}
Via the principal polarization, $\beta$ corresponds to an element of
$\End_\Q(E)$.  In the non-CM case $\End_\Q(E)=\Z$.  In the CM case, the
differential representation
\[
 \End_\Q(E)\longrightarrow
 \End_\Q H^0(E,\Omega^1_{E/\Q})\simeq\Q
\]
is injective in characteristic zero.  Hence a $\Q$-defined CM endomorphism is
an element of the imaginary quadratic CM field lying in $\Q$; being integral
over $\Z$, it lies in $\Z$.  Thus every $\Q$-endomorphism is a multiplication
map.  Multiplication maps are fixed by the Rosati involution, so
$\beta=\widehat\beta$.
\end{proof}

\begin{remark}
For comparison, the direct split-$\Gm$ construction also gives a surface after
a CM base extension.  On
\[
 E_0:\ y^2=x^3-2x,
 \qquad q_0=(2,2),
 \qquad K=\Q(\zeta_8),
\]
take $\beta=[i]$ and translate by $\zeta_8$.  Then
$\delta=[2i]$, $e_\delta=2$, and $N_{\delta,t}=2$.  The point $q_0$ is
non-torsion.  Indeed, if $q_0$ were torsion, then
$[2]q_0$ would also be torsion, and the Nagell--Lutz theorem would force
$[2]q_0$ to have integral coordinates.  This contradicts
\[
 [2]q_0=\left(\frac94,-\frac{21}{8}\right);
\]
see \cite[Chapter~VIII, Corollary~7.2]{SilvermanAEC}.  The new
Theorem~\ref{thm:surface-Q} is stronger in that both the surface and the point are
defined over $\Q$.
\end{remark}

\subsection{A geometrically nonsplit threefold over
\texorpdfstring{$\Q$}{Q}}

Consider the elliptic curve
\begin{equation}\label{eq:E389}
 E/\Q:\qquad y^2+y=x^3+x^2-2x
\end{equation}
and the rational points
\[
 Q_1=(0,0),\qquad Q_2=(1,0).
\]

We begin the explicit construction by recording the arithmetic information
needed to prove independence and density.

\begin{lemma}[Arithmetic of $389\mathrm a1$]\label{lem:E389-data}
One has
\[
 E(\Q)=\Z Q_1\oplus\Z Q_2,
 \qquad E(\Q)_{\tors}=0,
 \qquad \End_{\overline\Q}(E)=\Z.
\]
Moreover,
\[
 N_E=389,\qquad \Delta(E)=389,\qquad c_4(E)=112,
 \qquad j(E)=\frac{112^3}{389}.
\]
\end{lemma}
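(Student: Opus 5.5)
The plan is a direct computation from the Weierstrass coefficients, followed by standard reduction arguments and, for the Mordell--Weil group, an appeal to a descent computation in the literature. Equation \eqref{eq:E389} has $(a_1,a_2,a_3,a_4,a_6)=(0,1,1,-2,0)$. From these one gets
\[
 b_2=4,\qquad b_4=-4,\qquad b_6=1,\qquad b_8=-3 .
\]
Hence
\[
 c_4=b_2^2-24b_4=112,
 \qquad
 \Delta=-b_2^2b_8-8b_4^3-27b_6^2+9b_2b_4b_6=48+512-27-144=389,
\]
and therefore $j=c_4^3/\Delta=112^3/389$.

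Since $389$ is prime, $\Delta$ is squarefree and the equation is minimal. For the same reason $389\nmid c_4$, so the reduction at $389$ is multiplicative. The reduction is good at every other prime, so $N_E=389$. For the endomorphism ring, note that $j(E)\notin\Z$. A CM elliptic curve has algebraic-integer $j$-invariant, so $E$ has no CM and $\End_{\overline\Q}(E)=\Z$.

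For torsion, I would use the injectivity of $E(\Q)_{\tors}$ into $\widetilde E(\F_p)$ at primes of good reduction, away from the $p$-primary part. This is \cite[Chapter~VII, Proposition~3.1]{SilvermanAEC}, applied at two distinct primes to cover every prime factor. The point counts are:
\begin{itemize}
 \item Over $\F_2$ the equation becomes $y^2+y=x^3+x^2$. Each of $x=0,1$ gives two points, so with the point at infinity $\#\widetilde E(\F_2)=5$.
 \item Over $\F_3$ the equation becomes $y^2+y=x^3+x^2+x$. The values $x=0,1$ give two points each and $x=2$ gives one, so $\#\widetilde E(\F_3)=6$.
\end{itemize}
Any prime $\ell$ dividing the torsion order would have to divide $5$ (using $p=3$ if $\ell\ne3$, or $p=2$ if $\ell=3$) or $6$ (using $p=2$ if $\ell\ne2$). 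The two counts force $\ell\mid\gcd(5,6)=1$, so $E(\Q)_{\tors}=0$.

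The main obstacle is the Mordell--Weil statement $E(\Q)=\Z Q_1\oplus\Z Q_2$. One checks directly that $Q_1=(0,0)$ and $Q_2=(1,0)$ lie on $E$. Linear independence follows from the nonvanishing of the canonical-height regulator $\det\bigl(\langle Q_i,Q_j\rangle\bigr)\approx0.152$. An upper bound of $2$ for the rank comes from a full $2$-descent, which the prime discriminant makes tractable. Saturation, i.e.\ that $Q_1,Q_2$ generate the group and not merely a finite-index subgroup, follows from a lower bound on $\widehat h$ via the height-difference bounds, combined with the regulator value. This is the standard computation recorded in Cremona's tables for the curve $389\mathrm a1$, and I would cite it rather than reproduce it. It is also consistent with analytic rank $2$, which is the original Buhler--Gross--Zagier example.
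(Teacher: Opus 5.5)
Your computations of $b_2,b_4,b_6,b_8$, $c_4=112$, $\Delta=389$ and $j=112^3/389$ are correct. Your non-CM argument is exactly the paper's, and citing Cremona's tables for the Mordell--Weil basis is also what the paper does. Deriving $N_E=389$ from the prime discriminant and $389\nmid c_4$ is a valid elementary substitute for the paper's citation of the conductor. One side remark is inaccurate but carries no weight: the Buhler--Gross--Zagier example is the rank-three curve of conductor $5077$, not $389\mathrm a1$.

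The torsion step, however, does not work as written. You invoke only injectivity on prime-to-$p$ torsion (Silverman's Proposition VII.3.1). So $\#\widetilde E(\F_2)=5$ constrains only odd $\ell$, and $\#\widetilde E(\F_3)=6$ constrains only $\ell\ne3$; your parenthetical labels are swapped. This disposes of $\ell=3$ (via $p=2$) and of $\ell\ge5$ (via both primes). For $\ell=2$, though, the only available condition is $2\mid 6$, which holds.

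The conclusion $\ell\mid\gcd(5,6)$ would need full injectivity of torsion at $p=2$, which that proposition does not give. Since $v(2)=1=p-1$, the kernel of reduction over $\Q_2$ can contain points of order $2$ in general. So rational $2$-torsion has not been excluded. There are three easy repairs:
\begin{enumerate}
\item A point of order $2$ satisfies $2y+1=0$, so its $x$-coordinate would be a rational root of $4x^3+4x^2-8x+1$. None of $\pm1,\pm\frac12,\pm\frac14$ is a root.
\item Count at $p=5$: $\#\widetilde E(\F_5)=9$ is odd, which kills $2$-torsion.
\item As the paper does, take torsion-freeness from Cremona's tables. The statement $E(\Q)=\Z Q_1\oplus\Z Q_2$ that you already cite contains it.
\end{enumerate}
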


\begin{proof}
The rank, trivial torsion subgroup, displayed Mordell--Weil basis, and
conductor are the certified data for the curve $389\mathrm a1$ in Cremona's
tables \cite{Cremona1997,CremonaData}.  The displayed invariants follow from
the standard formulas for the integral Weierstrass equation
\eqref{eq:E389}.  Since $j(E)\in\Q\setminus\Z$, it is not an algebraic
integer.  Every CM $j$-invariant is an algebraic integer
\cite[Theorem~11.1]{Cox2013}; therefore $E$ has no complex multiplication and
$\End_{\overline\Q}(E)=\Z$.
\end{proof}

Put $A=E\times E$ and identify $A\simeq A^\vee$ by the product principal
polarization.  Let
\[
 q=(Q_1,Q_2)\in A^\vee(\Q),
\]
and define
\[
 \beta(X,Y)=(Y,0),
 \qquad
 \widehat\beta(X,Y)=(0,X).
\]
Thus
\begin{equation}\label{eq:explicit-delta}
 \delta(X,Y)=(Y,-X),
 \qquad \delta^2=[-1].
\end{equation}
Let
\[
 1\longrightarrow\Gm\xrightarrow{\iota}G_q
 \xrightarrow{\pi}E^2\longrightarrow0
\]
be the extension represented by $q$, let $R=R_\beta(q)$, and put
\begin{equation}\label{eq:explicit-P}
 \eps=\iota(-1),
 \qquad P=R+\eps.
\end{equation}

The Mordell--Weil basis and the absence of extra geometric endomorphisms now
force the extension parameter, and hence its antisymmetric image, to be
Zariski dense.

\begin{lemma}\label{lem:q-dense}
The cyclic subgroups generated by $q$ and by
$\delta q=(Q_2,-Q_1)$ are Zariski dense in $E^2$.
\end{lemma}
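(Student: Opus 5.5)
To prove the lemma, I will first reduce both claims to the standard description of algebraic subgroups of a power of a non-CM elliptic curve. By Lemma~\ref{lem:E389-data}, $\End_{\overline\Q}(E)=\Z$. For any point $X=(X_1,X_2)\in E^2(\overline\Q)$, the Zariski closure $H$ of $\Z X$ is an algebraic subgroup of $E^2$. Suppose $H\ne E^2$. Then $H^0$ is a proper abelian subvariety. Every proper abelian subvariety of $E^2$ lies in the kernel of a nonzero homomorphism $E^2\to E$, that is, of a map $(x,y)\mapsto ax+by$ with $(a,b)\in\Z^2\setminus\{0\}$, since $\Hom(E^2,E)=\End(E)^2=\Z^2$. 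Because $H/H^0$ is finite of order $k$, the point $kX$ lies in $H^0$. Hence $k(aX_1+bX_2)=0$, so $aX_1+bX_2$ is a torsion point.

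This gives a sufficient criterion: $\Z X$ is Zariski dense in $E^2$ as soon as $X_1,X_2$ are $\Z$-linearly independent modulo torsion. I will also need the converse direction of the subvariety claim, namely that every proper abelian subvariety is contained in such a kernel. I will justify it directly. Apply Poincaré reducibility: the quotient $E^2/H^0$ is a nonzero abelian variety isogenous to $E$ or to $E^2$. Composing with a projection onto a factor isogenous to $E$, and then with an isogeny to $E$, gives a nonzero homomorphism $E^2\to E$ killing $H^0$.

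For $q=(Q_1,Q_2)$, Lemma~\ref{lem:E389-data} gives $E(\Q)=\Z Q_1\oplus\Z Q_2$ with trivial torsion. So $aQ_1+bQ_2$ is torsion only when it vanishes, which forces $a=b=0$. The criterion then yields density of $\Z q$.

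For $\delta q=(Q_2,-Q_1)$, the relation $aQ_2-bQ_1=0$ again forces $a=b=0$. Equivalently, $\delta$ is an automorphism of $E^2$ defined over $\Q$ by \eqref{eq:explicit-delta}, so it maps the dense subgroup $\Z q$ onto $\Z\delta q$ and preserves Zariski density.

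I expect the main obstacle to be making the structure statement about subgroups airtight. In particular, I must make sure that the torsion is handled: the group $H$ need not be connected, which is why I pass to $kX$. I also need the reduction to use only $\End_{\overline\Q}(E)=\Z$ and the Mordell--Weil data. Everything else is formal.
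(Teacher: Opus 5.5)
Your proof is correct and follows essentially the same route as the paper. Both arguments pass to the identity component of the orbit closure and use $\End_{\overline\Q}(E)=\Z$ to place it in the kernel of some nonzero $f_{a,b}(X,Y)=[a]X+[b]Y$. Both then kill the component group by its order and conclude $a=b=0$ from the Mordell--Weil basis and trivial torsion. The only cosmetic differences are these: the paper treats $H^0=0$ separately and deduces the case of $q$ from that of $\delta q$ via the isomorphism $\delta$, whereas you absorb $H^0=0$ into the kernel statement and treat both points directly.
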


\begin{proof}
It is enough to treat $\delta q$, since $\delta$ is an isomorphism.  Let
\[
 H=\overline{\mathbf Z(\delta q)}^{\,\Zar},
 \qquad B=H^0,
 \qquad c=\#(H/B).
\]
If $B=0$, then $\delta q$ is torsion, contrary to
Lemma~\ref{lem:E389-data}.  Suppose that $B$ is a proper positive-dimensional
abelian subvariety of $E^2_{\overline\Q}$.  Then $B$ is an elliptic curve,
and the quotient $E^2/B$ is an elliptic curve isogenous to $E$.  Composing the
quotient map with an isogeny $E^2/B\to E$ gives a nonzero homomorphism
$E^2\to E$.  Since $\End_{\overline\Q}(E)=\Z$, it has the form
\[
 f_{a,b}:E^2\longrightarrow E,
 \qquad f_{a,b}(X,Y)=[a]X+[b]Y,
\]
for some $(a,b)\in\Z^2\setminus\{(0,0)\}$, and
$B\subseteq\ker f_{a,b}$.  Since
$[c]\delta q\in B$,
\[
 0=f_{a,b}([c]\delta q)=[ca]Q_2-[cb]Q_1.
\]
The group $E(\Q)$ is torsion-free and $Q_1,Q_2$ form a basis, so $a=b=0$,
a contradiction.  Hence $B=E^2$ and $H=E^2$.
\end{proof}

The data
\[
 K_0=\Q,\qquad B=E,\qquad A=E\times E,
 \qquad q=(Q_1,Q_2)
\]
therefore instantiate the symplectic datum fixed in
\cref{sec:block}: by \cref{lem:q-dense}, the cyclic subgroup generated by
$q$ is Zariski dense in $A^\vee\simeq E^2$.  Thus every structural family
constructed from that datum above admits this completely explicit
specialization over $\Q$.

We can therefore assemble the preceding ingredients into an explicitly
specified Zariski-dense instance of Silverman's return phenomenon.

\begin{theorem}
\label{thm:explicit-Q}
Let $\Ner/\Z$ be the N\'eron lft-model of $G_q$, and put
\[
 U_0=\Spec\Z\!\left[\frac1{389}\right],
 \qquad
 U=\Spec\Z\!\left[\frac1{2\cdot389}\right].
\]
Then:
\begin{enumerate}[label=\textup{(\roman*)}]
\item $G_q$ is a geometrically nonsplit, geometrically integral semiabelian
      threefold, and $P\in G_q(\Q)=\Ner(\Z)$ has Zariski-dense cyclic orbit;
\item the construction extends to a semiabelian scheme
      $\mathcal G_{U_0}/U_0$, and
\begin{equation}\label{eq:explicit-order-two}
 d_2(P)=5;
\end{equation}
      over $U$, for every prime $p\notin\{2,389\}$ and every odd integer
      $n\ge1$,
\begin{equation}\label{eq:explicit-odd-obstruction}
 [n]\overline P_p\ne0,
\end{equation}
      and consequently
\begin{equation}\label{eq:explicit-even}
 2\mid d_p(P)\qquad(p\notin\{2,389\});
\end{equation}
\item the initial full denominator ideal is the unit ideal:
\begin{equation}\label{eq:explicit-unit-initial}
 \dd_{\Ner}(P)=\Z,
 \qquad D_P=1;
\end{equation}
      for every odd $n$, the ideal $\dd_{\Ner}(nP)$ is supported on
      $\{2,389\}$, and
\[
 v_2\bigl(\dd_{\Ner}(nP)\bigr)>0
 \quad\Longleftrightarrow\quad 5\mid n;
\]
\item there is a squarefree integer $Q_P$, divisible by $10$, such that
\begin{equation}\label{eq:explicit-coprime-return}
 (n,Q_P)=1\quad\Longrightarrow\quad D_{nP}=1;
\end{equation}
      hence $D_{r^{a}P}=1$ for every rational prime $r\nmid Q_P$ and every
      $a\ge1$, and every such $r^a$ is missing from the exact-order spectrum.
\end{enumerate}
Thus $(\Ner,P)$ satisfies all hypotheses and a cofinite prime-index strengthening of
Silverman's Conjecture~10.
\end{theorem}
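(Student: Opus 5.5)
The plan is to derive (i), the odd-index obstruction, and the return core from the general results, and to treat the single bad prime $2$ by an explicit computation.

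\textbf{Geometric assertions.} By \cref{lem:q-dense} the cyclic subgroup generated by $\delta q=(Q_2,-Q_1)$ is Zariski dense in $E^2$, and $\delta$ is an isomorphism with $e_\delta=1$. The torsion point $\eps=\iota(-1)$ has order $2$, so $a_2=1$ and $\rho_2(\delta,\eps)=1$, giving $N_{\delta,\eps}=2$. Theorem~\ref{thm:main} then gives that $G_q$ is geometrically nonsplit and that $\Z P$ is dense. Geometric integrality holds because $G_q$ is a $\Gm$-torsor over the geometrically integral $E^2$. Since $E$ has good reduction away from $389$, the abelian scheme $E^2$, the Poincar\'e biextension, $q$, the normalized Ribet section, and $\eps$ all spread out over $U_0$. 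This gives $\mathcal G_{U_0}$.

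\textbf{Odd-index obstruction.} For \eqref{eq:explicit-odd-obstruction} I would run the argument of Corollary~\ref{cor:coprime-obstruction} at each prime $p\notin\{2,389\}$. Suppose $[n]\overline P_p=0$ with $n$ odd. Projecting gives $[n]\delta\overline q_p=0$, so $m_p$ is odd. By Corollary~\ref{cor:ribet-reduction}, which needs no prime-to-$p$ hypothesis, $[n]\overline R_p$ has odd order dividing $m_p^2$. On the other hand $[n]\overline\eps_p=-1$ has exact order $2$ in $\F_p^\times$, since $p$ is odd. These two facts are incompatible, so $[n]\overline P_p\ne0$, and \eqref{eq:explicit-even} follows. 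By Lemma~\ref{lem:model-comparison} the same holds on $\Ner$ for these $p$.

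\textbf{The prime $2$ and the initial ideal.} This is the main obstacle. At $p=2$ the point $-1$ reduces to $1$, so the shift is invisible there, and $d_2(P)=d_2(R)$ must be computed directly. The plan is as follows.
\begin{itemize}
\item Reduce $E$ mod $2$ and count points, which I expect to give $\#\widetilde E(\F_2)=5$.
\item Then $\overline Q_1,\overline Q_2$ have order $5$, so $m_2=5$ and $\delta\overline q_2$ also has order $5$.
\item By Proposition~\ref{thm:exact-order}, $d_2(P)=5\,\ord(\omega_2\cdot1)$ with $\omega_2\in\mu_5(\F_2)=1$, hence $d_2(P)=5$.
\end{itemize}
I would double-check that $\#\widetilde E(\F_2)=5$ and that both reduced points are nonzero; the point count is the fragile step.

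At $389$, the semiabelian reduction fails, so I would work with the component group of the N\'eron lft-model of $G_q$. I would check that $\overline P_{389}$ is not the identity, for instance because the projection $(Q_2,-Q_1)$ reduces to a nonidentity point of the nodal fiber.

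Granting this, $\dd_{\Ner}(P)$ has no support at $389$. It has none at $2$ because $d_2=5>1$, and none elsewhere by evenness. Hence $\dd_{\Ner}(P)=\Z$. For odd $n$, the support of $\dd_{\Ner}(nP)$ lies in $\{2,389\}$, and at $2$ it is positive exactly when $5\mid n$, by Lemma~\ref{lem:reduction-criterion}.

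\textbf{Return core.} Finally, apply Proposition~\ref{prop:finite-cone-return} with $h_1=2$, $S=\{2,389\}$, and $o_2=5$. The place $389$ contributes either $o_{389}$ (if $1<o_{389}<\infty$) or nothing (if $o_{389}=\infty$). This yields $Q_P$ divisible by $10$ with $D_{nP}=D_P=1$ whenever $(n,Q_P)=1$. The missing-order claim then follows from Proposition~\ref{prop:intro-return-excludes-order}.
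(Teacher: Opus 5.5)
Your proposal is correct and follows the paper's proof essentially step for step. The shared steps are: density and geometric nonsplitting from the main theorem (via the density of $\delta q$ and the fact that $\delta$ is an isomorphism); the parity obstruction away from $\{2,389\}$; the count $\#E(\F_2)=5$ together with $\mu_5(\F_2)=1$, which gives $d_2(P)=5$; and the finite-cone return criterion with $S=\{2,389\}$, $h_1=2$, and $o_2=5$.

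There are two small points of difference. First, you get $\dd_{\Ner}(P)$ trivial away from $389$ from $d_p(P)>1$, whereas the paper uses the integral affine reduction of $\pi(P)=(Q_2,-Q_1)$; either works. Second, your ``nonidentity point of the nodal fiber'' at $389$ is made precise in the paper by three ingredients: the equation is minimal since $v_{389}(\Delta)=1$; $\partial F/\partial y=\pm1$ at $(1,0)$ and $(0,-1)$, so both points reduce to smooth affine points; and N\'eron functoriality of $G_q\to E^2$. Incidentally, $E$ does have semiabelian (multiplicative) reduction at $389$; what fails there is good reduction.
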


\begin{proof}
The extension class is the non-torsion point $q$, so $G_q$ is geometrically
nonsplit.  A semiabelian variety is smooth and geometrically connected, hence
geometrically integral.  By Lemma~\ref{lem:q-dense} and Equation~(\ref{eq:explicit-delta}), the
projection $\delta q$ is dense and $\delta$ is an isomorphism; therefore
Theorem~\ref{thm:main} gives the density of $P$.

The curve $E$ has discriminant $389$, so $E$, $A=E^2$, the points
$Q_1,Q_2$, the product polarization, $\beta$, $\widehat\beta$, and $\delta$
extend over $\Z[1/389]$.  The normalized Poincar\'e torsor, the extension
represented by $q$, and the normalized Ribet section extend over the same
base.  This gives $\mathcal G_{U_0}$.  After inverting $2$, the toric section
$-1$ has exact order $2$ in every geometric fiber.

We first compute the reduction at $2$.  Direct enumeration gives
\[
 E(\F_2)=\{O,(0,0),(0,1),(1,0),(1,1)\},
 \qquad \#E(\F_2)=5.
\]
Both $\overline Q_1$ and $\overline Q_2$ are nonidentity, hence have exact
order $5$.  Therefore $\overline q_2$ and
$\delta\overline q_2$ have exact order $5$.  The Ribet identity gives
\[
 [5]\overline R_2
 =e_5^A(\beta\overline q_2,\overline q_2)
 \in\mu_5(\F_2)=\{1\}.
\]
The shift $-1$ reduces to $1$ in $\F_2^\times$, so
$[5]\overline P_2=0$.  Since the projection of $\overline P_2$ has exact
order $5$, this proves \eqref{eq:explicit-order-two}.

Now fix $p\notin\{2,389\}$ and put
$m_p=\ord(\overline q_p)$.  The reduction of $\delta$ is an isomorphism, so
$\delta\overline q_p$ also has exact order $m_p$.  If an odd $n$ killed
$\overline P_p$, then projection would give $m_p\mid n$, so $m_p$ would be
odd.  The reduced Ribet point has order dividing $m_p^2$, and hence
$[n]\overline R_p$ has odd order.  On the other hand,
\[
 [n]\overline R_p=-[n]\overline\eps_p=-\overline\eps_p
\]
has exact order $2$, a contradiction.  This proves
\eqref{eq:explicit-odd-obstruction}; taking $n=d_p(P)$ proves
\eqref{eq:explicit-even}.

We next prove \eqref{eq:explicit-unit-initial}.  The projection of $P$ is
\[
 \pi(P)=\delta q=(Q_2,-Q_1).
\]
At every prime $p\ne389$, both coordinates have integral affine reduction and
therefore their pair is not the identity of $E^2(\F_p)$.  At $p=389$, the
displayed Weierstrass equation is minimal because $v_{389}(\Delta)=1$.
Writing
\[
 F(x,y)=y^2+y-x^3-x^2+2x,
\]
we have
\[
 \frac{\partial F}{\partial y}(Q_2)=1,
 \qquad
 \frac{\partial F}{\partial y}(-Q_1)=-1
 \pmod{389}.
\]
Thus $Q_2=(1,0)$ and $-Q_1=(0,-1)$ specialize to smooth affine points of the
minimal special fiber.  The smooth locus of a minimal Weierstrass model maps
into the N\'eron model, with the point at infinity as identity
\cite[Chapter~VII, \S2]{SilvermanAEC}; hence the pair
$(\overline Q_2,-\overline Q_1)$ is nonidentity in the special fiber of the
N\'eron model of $E^2$.

The quotient homomorphism $G_q\to E^2$ extends, by the N\'eron mapping
property, to the N\'eron lft-models.  If $P$ met the identity section at any
prime, its projection would do the same.  The preceding paragraph rules this
out at every prime, so $\dd_{\Ner}(P)=\Z$.  The support assertion for odd $n$
now follows from the parity argument away from $\{2,389\}$, and the criterion
at $2$ follows from $d_2(P)=5$.

Finally apply Proposition~\ref{prop:finite-cone-return} with $S=\{2,389\}$ and
$h_1=2$.  At $2$ the local order is $5$, while at $389$ the reduction is
nonidentity, so its order is either an integer $>1$ or infinite.  The resulting
squarefree return-core modulus is divisible by $2\cdot5=10$ and gives
\eqref{eq:explicit-coprime-return}.  The final exact-order assertion follows
from Proposition~\ref{prop:intro-return-excludes-order}.
\end{proof}

\begin{remark}
Direct calculations verify that $Q_1,Q_2$ lie on the curve
$389\mathrm a1$, compute
\[
 \Delta(E)=389,
 \qquad c_4(E)=112,
 \qquad c_6(E)=-856,
 \qquad j(E)=\frac{112^3}{389},
\]
check that $\delta^2=-I_2$, enumerate $E(\mathbf F_2)$, and verify that
$\overline q_2$ and $\delta\overline q_2$ have exact order $5$.  They also
show that the reductions of $Q_2$ and $-Q_1$ at $389$ are smooth affine
nonidentity points.  For the rational surface, direct calculations verify that
$(3,5)$ lies on $y^2=x^3-2$, compute the discriminant $-1728$, and check the
Nagell--Lutz divisibility obstruction.  Together with the Ribet identity and
$\mu_5(\mathbf F_2)=\{1\}$, these calculations verify the finite arithmetic
inputs used in the two explicit constructions.  The Mordell--Weil basis is
taken from Cremona's certified tables and is not inferred from a bounded
computation.
\end{remark}

\begin{remark}
Away from $2$ and $389$, the obstruction is purely parity-theoretic: an odd
multiple of the Ribet term has odd order and cannot cancel the nontrivial
$2$-torsion shift.  At $2$ the shift specializes to the identity, and the
reduction order is instead the odd value $5$.
\end{remark}

We now give the primewise formulation of the eventual reduction divisor. Perucca proved two results that must be distinguished.  For a general
semiabelian variety, the component number of the algebraic orbit closure
divides almost every reduction order \cite[Proposition~2.2]{Perucca2009}.  For
a product of an abelian variety and a torus, this component number is the
greatest such universal divisor, and finitely many $\ell$-adic valuations may
be prescribed on a set of places of positive Dirichlet density
\cite[Main Theorem~1]{Perucca2009}.  In every application below, the
extension parameter $q$ has Zariski-dense cyclic orbit in a connected abelian
variety.  Thus the component number denoted $n_q$ by Perucca is $1$, and we
apply her Main Theorem with the auxiliary integer $m=1$.  Our examples show
that equality with the component number fails maximally for geometrically
nonsplit extensions.

\begin{theorem}
\label{thm:U-equals-2}
For the point $P$ of Theorem~\ref{thm:explicit-Q},
\[
 U(P)=2.
\]
More precisely,
\[
 u_2(P)=1,
 \qquad u_\ell(P)=0\quad(\ell\text{ odd}),
\]
and the set of primes $p$ for which $v_2(d_p(P))=1$ has positive Dirichlet
density.
\end{theorem}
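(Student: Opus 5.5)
The proof has two halves. The lower bound $u_2(P)\ge1$ is already available. The upper bounds $u_2(P)\le1$ and $u_\ell(P)=0$ for odd $\ell$ come from an exact order formula at well-chosen primes, combined with Perucca's theorem for the dense point $q$. This is the same pattern as Theorem~\ref{thm:squarefree-M} and the surface case of Theorem~\ref{thm:surface-Q}.

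\emph{Lower bound.} By \eqref{eq:explicit-even}, $2\mid d_p(P)$ for every $p\notin\{2,389\}$. Hence $u_2(P)\ge1$.

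\emph{Exact order formula.} Since $\delta$ is an isomorphism by \eqref{eq:explicit-delta}, Proposition~\ref{thm:exact-order} applies at every $p\notin\{2,389\}$, where all data spread out over $U$. Put $m_p=\ord(\overline q_p)$ and $\omega_p=e_{m_p}^A(\beta\overline q_p,\overline q_p)$, so that $\ord(\omega_p)\mid m_p$. The shift $\eps=\iota(-1)$ has exact order $h=2$ in every fiber over $U$. Corollary~\ref{cor:exact-coprime} then gives
\[
 d_p(P)=2\,m_p\,\ord(\omega_p)\qquad\text{whenever } m_p \text{ is odd}.
\]
I will check directly that its hypotheses hold here: $t=\eps$, $\zeta=-1$, and $\gcd(m_p,2)=1$.

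\emph{Perucca input.} Apply \cite[Main Theorem~1]{Perucca2009} to $q\in E^2(\Q)$. Its cyclic orbit is Zariski dense and connected by Lemma~\ref{lem:q-dense}, so $n_q=1$, and I take the auxiliary integer $m=1$. Prescribing $v_2(m_p)=0$ yields a set of primes of positive Dirichlet density on which $m_p$ is odd. On that set the formula gives $v_2(d_p(P))=1$, since $\ord(\omega_p)$ divides the odd number $m_p$. This proves $u_2(P)\le1$ and the Dirichlet density claim. For an odd prime $\ell$, prescribe $v_2(m_p)=v_\ell(m_p)=0$ simultaneously. On the resulting positive-density set, $\ell\nmid 2m_p\ord(\omega_p)$, so $u_\ell(P)=0$. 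Removing the finitely many excluded primes $\{2,389\}$ does not affect positive density, and $u_\ell$ is insensitive to finitely many places.

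\emph{Conclusion.} Every $u_\ell(P)$ is finite and all but $u_2$ vanish, so $U(P)$ is defined and equals $2$.

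\emph{Main obstacle.} The main obstacle is making sure Perucca's Main Theorem is applied in a setting it covers. The point $q$ lies on the abelian variety $E^2$ itself, not on the nonsplit extension, so the product hypothesis (an abelian variety times a trivial torus) holds. Moreover the prescribed conditions constrain only $m_p=\ord(\overline q_p)$, never $d_p(P)$ directly. Everything about $P$ is then transported by the exact formula, which is valid at all primes of $U$ without any prime-to-characteristic hypothesis, by Corollary~\ref{cor:ribet-reduction}.
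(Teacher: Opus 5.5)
Your proposal is correct and matches the paper's proof step for step. The lower bound $u_2(P)\ge1$ comes from \eqref{eq:explicit-even}. Perucca's Main Theorem is then applied to the connected dense orbit of $q$, with valuation zero prescribed at $2$ (respectively at $2$ and $\ell$), and the exact formula $d_p(P)=2m_p\ord(\omega_p)$ of Corollary~\ref{cor:exact-coprime} carries this information from $m_p$ to $d_p(P)$.
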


\begin{proof}
Equation \eqref{eq:explicit-even} gives $u_2(P)\ge1$.  Let
$m_p=\ord(\overline q_p)$ and let $\omega_p$ be the Weil-pairing factor from
Proposition~\ref{thm:exact-order}.  The orbit closure of $q$ in $E^2$ is connected.
Perucca's theorem, with prescribed $2$-adic valuation zero, gives a set of
primes of positive Dirichlet density for which $m_p$ is odd.  At such a prime,
Corollary~\ref{cor:exact-coprime} with $h=2$ gives
\[
 d_p(P)=2m_p\ord(\omega_p),
\]
and both factors after $2$ are odd.  Thus $v_2(d_p(P))=1$.

Fix an odd prime $\ell$.  Prescribe valuation zero at $2$ and $\ell$ in
Perucca's theorem.  On a set of positive Dirichlet density,
$\gcd(m_p,2\ell)=1$.  Since $\ord(\omega_p)\mid m_p$, the same formula gives
$\ell\nmid d_p(P)$.  Hence $u_\ell(P)=0$.
\end{proof}

\begin{corollary}
\label{cor:component-failure}
For the rational surface of Theorem~\ref{thm:surface-Q},
\[
 \overline{\mathbf ZP_{\mathrm{surf}}}^{\,\Zar}=G_{\mathrm{surf}},
 \qquad
 \#\pi_0\!\left(
 \overline{\mathbf ZP_{\mathrm{surf}}}^{\,\Zar}
 \right)=1,
 \qquad
 U(P_{\mathrm{surf}})=2.
\]
Thus Perucca's general component-divisibility lower bound can be strict even
for a point with Zariski-dense cyclic orbit on a geometrically nonsplit
semiabelian surface over $\Q$.  The explicit split-kernel threefold of Theorem~\ref{thm:explicit-Q}
provides a second rational example with the same strict inequality.
\end{corollary}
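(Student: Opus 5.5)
The statement to prove is Corollary~\ref{cor:component-failure}. I would assemble it from Theorem~\ref{thm:surface-Q} and Theorem~\ref{thm:U-equals-2}; no new arithmetic input is needed.

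First, orbit closure and component number. Theorem~\ref{thm:surface-Q}(i), which rests on Theorem~\ref{thm:quadratic-descent}(iii) and Lemma~\ref{lem:dense-lift}, gives that $\mathbf ZP_{\mathrm{surf}}$ is Zariski dense in $G_{\mathrm{surf}}$. Hence its Zariski closure is all of $G_{\mathrm{surf}}$. A semiabelian variety is smooth and geometrically connected, since it is an extension of the connected curve $E$ by the connected torus $T=R^1_{L/\Q}\Gm$. So this closure has exactly one connected component, and $\#\pi_0=1$.

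Second, the eventual divisor. I would quote Theorem~\ref{thm:surface-Q}(ii), $U(P_{\mathrm{surf}})=2$. Its proof has two parts. Over $L$, the formula \eqref{eq:surface-exact-order} holds at places with $(m_w,6)=1$. Perucca's Main Theorem, applied to the connected orbit closure of $q$ with $m=1$, supplies positive-density sets of places with prescribed zero valuations. Lemma~\ref{lem:base-change-orders} then transfers the result back to $\Q$.

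Third, the comparison. Perucca's general result \cite[Proposition~2.2]{Perucca2009} says only that the component number, here $1$, divides almost every reduction order. Here $U=2$ while the component number is $1$, so the divisibility $1\mid 2$ holds but is strict. For the threefold of Theorem~\ref{thm:explicit-Q}, the same two steps apply. Density and connectedness come from Theorem~\ref{thm:explicit-Q}(i), and $U(P)=2$ is Theorem~\ref{thm:U-equals-2}.

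The only step that needs care is that $\pi_0$ is computed geometrically. Base change to $\overline{\Q}$ preserves connectedness of $G_{\mathrm{surf}}$, and density over $\Q$ implies density over $L$ by the descent argument. So either reading of $\#\pi_0$ gives $1$, and the corollary follows.
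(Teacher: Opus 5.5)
Your proposal is correct and matches the paper. The paper gives no separate proof; it assembles the corollary exactly as you do. Density and $U(P_{\mathrm{surf}})=2$ come from Theorem~\ref{thm:surface-Q}, whose proof already notes that the connected orbit closure has component number $1$, and the threefold case comes from Theorems~\ref{thm:explicit-Q} and~\ref{thm:U-equals-2}.

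One small correction to your last paragraph: the paper proves density over $L$ first, via Lemma~\ref{lem:dense-lift}, and then deduces density over $\Q$. The direction you state is also true, but for a general reason rather than "by the descent argument": Zariski closures of sets of rational points commute with field extension.
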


\subsection{Examples over \texorpdfstring{$\Q$}{Q} in every dimension at least two}

The rational surface of Theorem~\ref{thm:surface-Q} is the minimal-dimensional
example.  Adjoining split toric factors produces examples in every larger
dimension without destroying either density or the return core.  The only
possible obstruction would be a common toric quotient; it is excluded by the
absence of rational characters on the surface.

\begin{lemma}
\label{lem:no-characters}
For the surface $G_{\mathrm{surf}}/\Q$ of Theorem~\ref{thm:surface-Q},
\[
 \Hom_\Q(G_{\mathrm{surf}},\Gm)=0.
\]
\end{lemma}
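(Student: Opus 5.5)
The plan is to reduce to the split case by restriction to the toric kernel, and then to use the exact sequence of $\Hom$ groups attached to \eqref{eq:surface-extension}. Let $\chi:G_{\mathrm{surf}}\to\Gm$ be a $\Q$-homomorphism. Its restriction to $E$-directions is harmless: since $\Hom_{\Q}(E,\Gm)=0$ (an abelian variety admits no nonconstant map to an affine group), applying $\Hom_\Q(-,\Gm)$ to \eqref{eq:surface-extension} gives an exact sequence
\[
 0\longrightarrow\Hom_\Q(G_{\mathrm{surf}},\Gm)\longrightarrow\Hom_\Q(T,\Gm)\longrightarrow\Ext^1_\Q(E,\Gm).
\]
So it suffices to show that the restriction map to $\Hom_\Q(T,\Gm)$ has zero image, or more simply that $\Hom_\Q(T,\Gm)$ already vanishes.

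The first key step is that $\Hom_\Q(T,\Gm)=X^*(T)^{\Gal(\overline\Q/\Q)}$, which by the character-lattice description of the norm-one torus $T=R^1_{L/\Q}\Gm$ is $\Z$ with $\sigma$ acting by $-1$ (as recalled at the start of \cref{sec:descent}). The invariants of the sign representation on $\Z$ are $0$. Hence $\Hom_\Q(T,\Gm)=0$, and the displayed exact sequence forces $\Hom_\Q(G_{\mathrm{surf}},\Gm)=0$.

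As an independent check, and in case one prefers not to invoke the exactness above, I would also argue after base change to $L$: a $\Q$-character becomes an $L$-character of $G_{\mathrm{surf},L}\simeq G_q$, and the remark following \cref{prop:ar-conditional-pullback} shows $\Hom_L(G_q,\Gm)=0$ because the extension class $q=\delta Q_0$ is non-torsion (established in the proof of \cref{thm:surface-Q}). Since $\Hom_\Q\hookrightarrow\Hom_L$, this again gives vanishing. The only point needing care, and the one I expect to be the mild obstacle, is justifying that restriction of a character to the toric kernel determines it, i.e.\ that a character trivial on $T$ factors through $E$ and hence is trivial; this is exactly the vanishing of $\Hom(E,\Gm)$, which holds since $E$ is proper and connected and $\Gm$ is affine.
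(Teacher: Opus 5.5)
Your argument is correct and is essentially the paper's proof. You restrict the character to the norm-one torus, where $\Hom_\Q(T,\Gm)=X^*(T)^{\Gal(L/\Q)}=0$ because Galois acts on the rank-one lattice by $-1$, and then use $\Hom_\Q(E,\Gm)=0$; your left-exact $\Hom$ sequence simply packages those two steps. Your backup argument over $L$ is also valid and gives the stronger fact $\Hom_L(G_{\mathrm{surf},L},\Gm)=0$: it uses the non-torsion class $q=\delta Q_0$, the remark after \cref{prop:ar-conditional-pullback}, and injectivity of $\Hom_\Q\to\Hom_L$.
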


\begin{proof}
Let $\chi:G_{\mathrm{surf}}\to\Gm$ be a character.  Its restriction to the
norm-one torus $T=R^1_{L/\Q}\Gm$ is a $\Q$-character of $T$.  The character
lattice $X^*(T)$ is a rank-one lattice on which
$\Gal(L/\Q)$ acts by $-1$, so $X^*(T)^{\Gal(L/\Q)}=0$ and
$\Hom_\Q(T,\Gm)=0$.  Hence $\chi$ factors through the elliptic quotient
$E$, but $\Hom_\Q(E,\Gm)=0$.  Thus $\chi=0$.
\end{proof}

\begin{lemma}[Commutative Goursat lemma]\label{lem:goursat}
Let $G$ be a connected commutative algebraic group over a field of
characteristic zero and let $T_0$ be a split torus.  Assume
$\Hom(G,\Gm)=0$.  If a connected algebraic subgroup
$H\subseteq G\times T_0$ projects surjectively onto both factors, then
$H=G\times T_0$.
\end{lemma}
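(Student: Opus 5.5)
The plan is to show that $H$ is the graph of a homomorphism that cannot exist. Let $p_1:H\to G$ and $p_2:H\to T_0$ be the two projections, both surjective by hypothesis. Put
\[
 N_1:=\ker(p_2)\subseteq G\times\{1\},
 \qquad
 N_2:=\ker(p_1)\subseteq\{0\}\times T_0 ,
\]
and identify them with closed subgroups $N_1\subseteq G$ and $N_2\subseteq T_0$. Since $p_1$ is surjective and $H$ is commutative, $p_1$ maps $N_1$ onto a closed subgroup of $G$. Goursat's lemma, which is valid for commutative algebraic groups in characteristic zero because quotients exist and are well behaved, then gives an isomorphism of algebraic groups
\[
 G/N_1\;\xrightarrow{\ \sim\ }\;T_0/N_2 ,
\]
whose graph is the image of $H$ in $G/N_1\times T_0/N_2$.

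Next I use that a quotient of a split torus by a closed subgroup is again a split torus; in characteristic zero the subgroup $N_2$ is diagonalizable, so the quotient is diagonalizable, and it is connected because $T_0$ is. Thus $T_0/N_2\simeq\Gm^k$ for some $k\ge0$. If $k>0$, composing $G\to G/N_1\simeq\Gm^k$ with a coordinate projection gives a nontrivial character $G\to\Gm$; it is nontrivial because $G\to G/N_1$ is surjective. This contradicts $\Hom(G,\Gm)=0$. Therefore $k=0$, which means $N_2=T_0$ and $N_1=G$.

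Consequently $H\supseteq\ker(p_1)=\{0\}\times T_0$ and $H\supseteq\ker(p_2)=G\times\{1\}$, so $H\supseteq G\times T_0$ and hence $H=G\times T_0$. As an alternative that avoids the Goursat quotient, one can argue by dimension: $H\cap(\{0\}\times T_0)$ is the kernel of $p_1$, and $H/(H\cap(\{0\}\times T_0))\simeq G$. Projecting to $T_0/(\text{image of that kernel})$ then yields a surjection from $G$ onto a torus, which must be trivial for the same reason.

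The main point requiring care is that $G/N_1$ is isomorphic as an algebraic group to a quotient of $T_0$, and hence is a split torus. The isomorphism should be justified scheme-theoretically, not only on points, using the existence of quotients and the first isomorphism theorem for smooth commutative groups in characteristic zero. The fact that any connected quotient of a split torus is a split torus then follows from the character-lattice description. Connectedness of $H$ is not essential here, since only surjectivity of the two projections is used.
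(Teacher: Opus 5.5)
Your proof is correct and follows essentially the same route as the paper: the Goursat isomorphism $G/N_1\simeq T_0/N_2$, the fact that a connected quotient of a split torus is a split torus, and the resulting nonzero character of $G$ if that quotient is nontrivial. Your final step, deducing $H\supseteq(\{0\}\times T_0)+(G\times\{1\})$, spells out what the paper leaves implicit in ``if $H$ were proper''. You are also right that connectedness of $H$ is not used.
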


\begin{proof}
Put
\[
 N_G=\{g\in G:(g,1)\in H\},
 \qquad
 N_{T_0}=\{t\in T_0:(1,t)\in H\}.
\]
The commutative Goursat construction gives
$G/N_G\simeq T_0/N_{T_0}$.  If $H$ were proper, this common quotient would be
a nontrivial connected quotient of a split torus, hence a nontrivial split
torus.  Composing the quotient map from $G$ with a nonzero character would
contradict $\Hom(G,\Gm)=0$.
\end{proof}

We conclude by the observation on cofinite prime-index returns in every dimension at least two.
\begin{theorem}
\label{thm:all-dimensions}
For every $d\ge2$, there exist a geometrically nonsplit connected
semiabelian variety $G_d/\Q$ of dimension $d$, a point $P_d\in G_d(\Q)$ with
Zariski-dense cyclic orbit, and a squarefree integer $Q_d$ such that
\[
 (n,Q_d)=1\quad\Longrightarrow\quad D_{nP_d}=D_{P_d}.
\]
Consequently
\[
 D_{r^{a}P_d}=D_{P_d}
\]
for every $a\ge1$ and every rational prime $r\nmid Q_d$; all these prime
powers are missing exact reduction orders.
\end{theorem}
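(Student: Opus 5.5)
The plan is to take $G_d=G_{\mathrm{surf}}\times\Gm^{d-2}$ over $\Q$ and $P_d=(P_{\mathrm{surf}},a)$. Here $a=(a_1,\dots,a_{d-2})\in\Gm^{d-2}(\Q)$ has multiplicatively independent coordinates chosen so that none of the toric factors interferes with the return core. The case $d=2$ is \cref{thm:surface-Q} itself. The natural choice is $a_i=p_i$, distinct primes. Then $\mathbf Z a$ is Zariski dense in $\Gm^{d-2}$, since a character relation $\prod p_i^{k_i}=\zeta$ forces all $k_i=0$.

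The first step is geometric. $G_d$ is connected, semiabelian of dimension $d$, and geometrically nonsplit, because its pushout/projection to $G_{\mathrm{surf}}$ already is. For density, let $H$ be the Zariski closure of $\mathbf ZP_d$. It surjects onto both factors, since each projection of $\mathbf ZP_d$ is dense. Pass to $H^0$; the images of $H^0$ are finite-index and connected, hence everything. Apply \cref{lem:goursat}, but over $\overline\Q$, since Goursat needs a split torus and $\Hom=0$. The worry is that \cref{lem:no-characters} only gives $\Hom_\Q(G_{\mathrm{surf}},\Gm)=0$. Over $\overline\Q$ I would instead argue as in the remark after \cref{prop:ar-conditional-pullback}: a geometric character restricts to the split kernel $\Gm$ with some degree $k$. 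If $k\ne0$, the pushout of the extension by $z\mapsto z^k$ splits, so $kq=0$, contradicting that $q$ is non-torsion. If $k=0$, the character factors through $E$, hence is trivial. So $\Hom_{\overline\Q}(G_{\mathrm{surf}},\Gm)=0$, Goursat gives $H^0=G_d$, and $\mathbf ZP_d$ is dense.

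The second step is arithmetic. For $p$ outside a finite set $S$, containing $2,3$, the $p_i$, and the bad places of $G_{\mathrm{surf}}$, the reduction order satisfies $d_p(P_d)=\operatorname{lcm}(d_p(P_{\mathrm{surf}}),\ord(\overline a_p))$. By \cref{thm:surface-Q} we have $2\mid d_p(P_{\mathrm{surf}})$ outside a finite set, so $2\mid d_p(P_d)$ there. Now apply \cref{prop:finite-cone-return} with $h_1=2$ and $S$ enlarged. This yields a squarefree $Q_d$ with $(n,Q_d)=1\Rightarrow \dd_{\Ner}(nP_d)=\dd_{\Ner}(P_d)$. That proposition only needs the Néron lft-model of $G_d$, which is the product of the Néron lft-models, together with \cref{lem:local-freezing} at the places of $S$. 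Over $\Q$, ideal equality is $D_{nP_d}=D_{P_d}$. The prime-power statement is then immediate, and the missing exact orders follow from \cref{prop:intro-return-excludes-order}.

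I expect the main obstacle to be the density step. Specifically, I must ensure the character-vanishing is geometric rather than just over $\Q$, since \cref{lem:goursat} is stated with a split torus and $\Hom(G,\Gm)=0$. The argument through the degree $k$ on the kernel and the non-torsion class $q$ should close this gap. The arithmetic step is routine once the universal divisor $2$ is inherited from the surface factor.
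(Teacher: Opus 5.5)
Your proposal is correct and follows the paper's proof essentially step for step: the same product $G_{\mathrm{surf}}\times\Gm^{d-2}$, density via $H^0$ and \cref{lem:goursat}, geometric nonsplitness via the pushout to $G_{\mathrm{surf}}$, and the return core from the inherited universal divisor $2$ through \cref{prop:finite-cone-return}. The only deviation is your detour through $\Hom_{\overline\Q}(G_{\mathrm{surf}},\Gm)=0$, which your kernel-degree argument proves correctly but which is not needed: the paper applies \cref{lem:goursat} directly over $\Q$, and this is legitimate because $\Gm^{d-2}$ is already split over $\Q$ and \cref{lem:no-characters} gives $\Hom_\Q(G_{\mathrm{surf}},\Gm)=0$.
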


\begin{proof}
For $d=2$, take $(G_{\mathrm{surf}},P_{\mathrm{surf}})$ from
Theorem~\ref{thm:surface-Q}.  Let $d>2$, choose multiplicatively independent
$a_1,\ldots,a_{d-2}\in\Q^\times$, and put
\[
 G_d=G_{\mathrm{surf}}\times\Gm^{d-2},
 \qquad
 P_d=(P_{\mathrm{surf}},a_1,\ldots,a_{d-2}).
\]
Let $H$ be the Zariski closure of $\mathbf ZP_d$.  Its projections onto the
surface and the split torus are surjective.  Since $H/H^0$ is finite, the
quotients of the two connected target groups by the corresponding projections
of $H^0$ are both connected and finite, hence trivial.  Thus $H^0$ projects
surjectively onto both factors.  By
Lemmas~\ref{lem:no-characters} and~\ref{lem:goursat}, $H^0=G_d$, so $P_d$ is dense.

Outside a finite set of places, the order of the surface component is even by
Theorem~\ref{thm:surface-Q}; therefore $2\mid d_v(P_d)$ outside a finite set.
Apply Proposition~\ref{prop:finite-cone-return} with $h_1=2$.  Viewed as an extension
of $E$ by $T\times\Gm^{d-2}$, the class of $G_d$ has nonzero projection to the
$T$-coordinate, namely the class of $G_{\mathrm{surf}}$.  A geometric splitting
of $G_d$ would therefore split $G_{\mathrm{surf}}$, a contradiction.  Thus
$G_d$ is geometrically nonsplit.  The missing-order statement follows from
Proposition~\ref{prop:intro-return-excludes-order}.
\end{proof}

\bibliographystyle{amsalpha}
\bibliography{Revise}

\end{document}